\def\mmN{\textcolor{blue}} %parti nuove
\numberwithin{equation}{section}
\theoremstyle{plain} %stile dei teoremi
\newtheorem{theorem}{Theorem}[section]
\newtheorem{proposition}[theorem]{Proposition}
\newtheorem{corollary}[theorem]{Corollary}
\theoremstyle{definition}
\newtheorem{remark}[theorem]{Remark}
\newtheorem{example}[theorem]{Example}
\newtheorem{definition}[theorem]{Definition}
\def \R{\mathbb R}
\def \N{\mathbb N}
\def \eps{\varepsilon}
\def \U{\mathcal U}
\def \K{\mathcal K}
\def\NN{\mathcal N}
\def\SS{\mathcal S}
\def \J{\mathcal J}
\def \T{\mathcal T}
\def\V{\mathcal{V}}
\def\W{\mathcal{W}}
\def \vsm{\vskip 0.1 truecm}
\def \ds{\displaystyle}
\def\bel{\begin{equation}\label}
\def\eeq{\end{equation}}
\def \w{\omega}
\def \xb{\check x_0}
\def \weak{\rightharpoonup^*}
\begin{document}

\title[Vector-valued impulsive delay systems]{Optimal Control Problems with Vector-Valued Impulse Controls and Time Delays} 
\author{Giovanni Fusco, Monica Motta, Richard Vinter}

\keywords{Impulse Control \and Time Delay Systems \and Optimal Control \and Maximum Principle \and Bounded Variation.}
\subjclass{49N25 \and 93C43 \and 49K21 \and 26A45 \and 49J21.}
\thanks{This research is  supported by the  INdAM-GNAMPA Project 2024 ``Non-smooth Optimal Control Problems",    CUP E53C23001670001,  and by PRIN 2022 ``Optimal control problems: analysis, approximation and applications",  2022238YY5.}
%\subtitle{Using  the  LaTex Template}

%\author{Giovanni Fusco, Monica Motta and  Richard Vinter}
%\institute{Giovanni Fusco \at
%             Louisiana State University\\   
%             303 Lockett Hall, Baton Rouge, 70803, Louisiana, USA           \\
%             gfusco@lsu.edu
%           \and
%              Monica Motta,  Corresponding author  \at
%              University of Padova\\   
%             Via Trieste, 63, Padova  35121, Italy         \\   
%             motta@math.unipd.it
%             \and
%             Richard Vinter \at
%             Imperial College London\\   
%             Exhibition Road, London SW7 2BT, UK       \\ 
%             r.vinter@imperial.ac.uk
%}
%
%
%\date{Received: date / Accepted: date}
%%The correct dates will be entered by the editor.
%
%\titlerunning{Vector-Valued Impulse Control Systems with Delays}
\maketitle

\begin{abstract}
We consider a nonlinear control system with vector-valued measures as controls and with dynamics depending on time delayed states.   First, we introduce a notion of   discontinuous, bounded variation solution associated with this system and establish an equivalent representation formula for it, inspired by the approach known in delay-free impulsive control as the `graph completions' method. Then, thanks to this equivalent formulation, we prove well-posedness properties of these solutions and also derive necessary optimality conditions in the form of a Maximum Principle for an associated minimization problem. 
\end{abstract}

%All acknowledgements should be placed in the back of the paper after Conclusions..

\section{Introduction}
 In this paper we introduce a well-posed notion of impulse control and corresponding impulse  solution for a delay impulse control system of the form (We write $\{ x(t-kh)\}_{k=0}^M$ in place of $(x(t), x(t-h), \dots, x(t-Mh))$)
\begin{equation}\label{DIS}\tag{DIS} 
\begin{cases}
\ds\frac{dx}{dt}(t)   = f(t, \{x(t-kh\}_{k=0}^M) + \sum_{j=1}^m g_j(t, \{x(t-kh\}_{k=0}^M) \frac{\,du^j}{dt}(t) ,   \\
 \ds\frac{dv}{dt}(t) = \sum_{j=1}^m\left | \frac{du^j}{dt}(t)\right |,   \\[1.0ex]
 \ds u(0)=0, \quad \frac{du}{dt}(t)\in\K, \quad \text{a.e. $t\in[0,T]$,}  \\[1.0ex]
 %\text{for some $\mu\in C^*( \K)$ and $x\in BV([0,T],\R^n)$, such that} \\[1.0ex]
%\text{range}(du)\subseteq K, \quad 
(x,v)(0)=(\xb,0), \ \ x(t)=\xi_0 (t)  \ \  \forall t\in[-Mh,0[ ,
\end{cases}
\end{equation}
 when   absolutely continuous controls $u=(u^1,\dots,u^m):[0,T]\to\R^m$ 
% with derivatives ranging on a given cone $\K\subset\R^m$,
  are replaced with    bounded  variation (in short, BV)  functions $u$,  so that  absolutely continuous measures $\frac{\,du}{dt}\,dt$ are replaced with   vector-valued measures $du$,  ranging over a cone $\K$. 
%   such that $u(t)=\int_{[0,t]}\mu(t')\,dt'$. 
  Furthermore, we provide necessary optimality conditions  for associated  optimal  control problems.   Precise assumptions on data  will be stated in the next section.

Delay-free  impulse control systems of this kind,  in which $f$ and the $g_j$'s are replaced by some $\tilde f$ and $\tilde g_j$'s   not depending on the delayed states,    have been widely investigated in the literature, also in  relation to optimal impulse control problems. Without aiming to be exhaustive, let us   mention the  papers \cite{BR88,MR95,K06,SV1,AKP12,MS20},   which follow a similiar approach,  and the books \cite{MiRu,BP,AKP19},  for an overview. In all these works, a BV  control  function $u$ is considered as  an idealization of high intensity control action over a  short time interval  and   the corresponding  impulse solution is defined   in such a  way as to coincide with the limit of  some sequence of state trajectories associated with a  sequence of absolutely continuous controls converging  to $u$.  As is well-documented in the above-mentioned literature, different approximations of a BV control can give rise to different state trajectories in the limit, unless the control is scalar-valued  or the dynamics functions $\tilde g_j$ satisfy a quite strong `commutativity' hypothesis (see e.g.  \cite{BP}). In the delay-free general situation, the set of solutions associated with a given vector-valued  BV control $u$   can be parameterized by families of control functions attached to each point of discontinuity in the distribution of the measure $du$, which determine the evolution of the state during the jump.

In contrast, delay impulse systems as in \eqref{DIS}  have so far only been studied under rather restrictive assumptions on dynamics and controls. Specifically,    \cite{SF10,SZ16} introduce a notion of impulse solution and investigate its main properties, assuming that the  $g_j$'s do not depend on the delayed state variables and satisfy the commutativity condition. In  \cite{FM324}, a well-posed notion of solution and  necessary optimality conditions are introduced for an impulse control  problem in which the $g_j$'s depend on the time variable only (with possibly non-commensurate time delays in the drift term $f$).  
In these papers, given a vector-valued BV control $u$, it is possible to determine one and only one corresponding bounded variation solution, since {\em all}  sequences of trajectories corresponding to absolutely continuous controls  approximating $u$  converge to the same function.  By analogy with  delay-free   impulse control,  this uniqueness property is somehow expected, since in both cases the commutativity condition is satisfied. Consequently, it might be supposed that  extending the impulse theory  to allow for time delays in the state would be a routine exercise, in which we adapt, for example,  the necessary optimality conditions  for conventional (`non-impulse') optimal control problems with time delay established in \cite{VB,V1,V2}.  This is   refuted however by recent results in \cite{FMV1,FMV2}, where system \eqref{DIS} is studied in the case of scalar and  monotone nondecreasing controls, $M=1$, and constant $\xi_0$.  In particular,   \cite{FMV1,FMV2} show that   when the dynamics function    $g=g_1$ depends  explicitly on a state time delay,   unlike in delay-free impulse control,   different control approximations may give rise to different state trajectories in the limit, even if the   control $u$  is scalar. This  is due to the overlapping of pulses at different times, which may arise because of the time delay.  Therefore, to identify a unique impulse solution  it is not sufficient to assign $u$;  it is necessary also to specify attached controls describing the behaviour of the solution during any jump, similar to the delay-free vector impulse case. 
 
The main novelty of the class of problems addressed in this paper is the presence of time delays in the state in conjunction with   vector-valued BV controls $u$, and measures $du$  constrained to range over a closed (not necessarily convex) cone   $\K\subseteq \R^m$.  Furthermore, compared to \cite{FMV1}, we also consider  a function $\xi_0$ rather than a constant  to describe the past history of the state $x$,  as is more usual in problems with time delays.

%  unexpected phenomena  occur,  because of the overlapping of pulses at different times, which may arise due to the time delay.  In particular, as shown in the recent papers , 
%  

%Furthermore, the derivation of necessary conditions  poses significant challenges, because the `change of independent variable' techniques employed in the delay free literature, to reduce impulsive optimal control problems to conventional optimal control problems, cannot be adapted directly to the delay setting, owing to the fact that the analogous change of variable does not generate a time-delay optimal control problem of standard form.

Let us briefly discuss the challenges of generalizing the method used for studying impulse control systems  when we introduce time delays,  and    the additional difficulties that arise when moving from the control-scalar problem investigated  in \cite{FMV1} to the present one.  In  delay-free impulse control, a key point is  to introduce an {\em equivalent} representation  for impulse controls and  impulse solutions as defined by \cite{K06,AKP19} (see also \cite{FM224}), based on the so-called  `graph completion' technique  \cite{BR88,BP}.  According to  this approach,  given a  BV control $u$, instead of adding attached controls,  
one   ‘completes’ the graph of $u$ at the discontinuity points and  considers an  ‘arc length type’, 1-Lipschitz continuous parameterization $(\varphi_0,\varphi)$   of this graph completion.  Then, one solves a conventional  time-space control system associated with $(\varphi_0,\varphi)$.  The  composition of the spatial component of this  system with the (discontinuous) time change  given by the right inverse of $\varphi_0$  describes  an impulse state trajectory.  Such a representation of impulse solutions is crucial, because it  makes possible the derivation of  the properties of impulse trajectories and
necessary optimality conditions  via the conventional  time-space control system. 

However, the graph completion approach cannot be applied directly  in the presence of time delays, since  adopting the above procedure we  would obtain  a   non-standard delay time-space control system, in which the time delays vary with time and are control dependent. Hence, we introduce  a suitably modified notion of graph completion,  so as to associate with \eqref{DIS} a  conventional delay time-space control system,   which preserves constancy of the delays.  As regards the comparison with our recent work \cite{FMV1}, 
the crucial difficulty switching from  scalar, monotone  controls to vector-valued BV controls $u$, is that some measure component $du_j$ might range over $\R$. Hence, we may have chattering approximating sequences of $u$, with total variation $\nu$ in the limit larger than the total variation of $u$  and, in consequence, we must adopt a more delicate  concept  of attached controls that involves  both $u$  and $\nu$. This also makes the generalization to the vector case of the new reparametrization technique  developed in  \cite{FMV1}  far from straightforward. We mention that, in previous work, we  considered only the case $\xi_0$ constant, because for time-varying $\xi_0$, our formerly employed reparametization techniques gave rise to a time-space system of a non-standard type, featuring control-dependent  data.   

The main contribution of this paper is to introduce a notion of impulse control and a corresponding impulse solution for  \eqref{DIS}, for which we prove an equivalent formulation  provided through an auxiliary,  conventional  time-space control system, in the spirit of the graph completion approach. Then, using this equivalence, we show the well-posedness of this definition and
%, showing, in particular,  existence and uniqueness of the impulsive solution associated with an impulsive control, the density of the set of trajectories associated with absolutely continuous controls in the set of impulsive trajectories, and a compactness result of any subset of impulsive solutions associated with BV  controls of equibounded   total variation (when $\K$ is convex).  Furthermore,  
%we
 provide necessary conditions of optimality  for an optimal control problem 
 in Mayer form  with endpoint constraints.
 %and with endpoint constraints 
% associated with \eqref{DIS}.  

The paper has the following structure. In Section \ref{S_impulse} we give a precise definition of impulse processes for \eqref{DIS}  and state their main properties.  Section \ref{S1}  introduces a  conventional time-space control system. In Section \ref{Smain}  we establish a one-to-one correspondence between processes for this system and  impulse processes. In Section \ref{S_optimal} we associate with \eqref{DIS} a Mayer cost and an endpoint constraint and provide a maximum principle for the resulting  optimal impulse control problem with delays. Some conclusions and possible future research  directions are proposed in Section \ref{S_concl}. An Appendix containing the main proofs concludes the paper.

% \noindent  Given $s_1<s_2$, we use $NBV^{+}([s_1,  s_2];{\mathbb R})$ to denote the space of increasing, real valued functions $\mu$ on $[s_1, s_2]$ of bounded variation, vanishing at the point $s_1$ and right continuous on $]s_1, s_2[$.   Each  $\mu\in NBV^+([s_1,  s_2];{\mathbb R})$  defines a Borel measure on $[s_1,  s_2]$,   still denoted by $\mu$,  its total variation function is  indicated by  $\mu([s_1,  s_2])$, and   its support  is spt$\{\mu\}$.  

%%%%%%%%%%%%%%%%%%%%%%%%%%%%%%%%%%%%%%%%%%%%%%%%%%%%%%%%%%%%%%

\section{Impulse Controls and Impulse Solutions}\label{S_impulse}
 Throughout the  paper we shall consider the following hypotheses.
{\em
\begin{itemize} 
\item[{\bf (H)}]
$N$, $M\in\N$, $N\geq1$, $N\ge M$,  $h>0$, and $T:=Nh$.  The control set $\K\subseteq{\mathbb R}^m$ is a closed cone, and $\xb\in {\mathbb R}^n$. The  functions  $f$,   $g_j$, $j=1,\dots, m$,  are bounded and  belong to $C^1({\mathbb R}\times({\mathbb R}^n)^{N+1}, {\mathbb R}^n)$,     $\xi_0\in  C([-Mh,0]), {\mathbb R}^n)$. 
 %\mmN{{\bf Comment.} Maybe, we can generalize these assumptions ... }
\end{itemize}}
For any control $u\in W^{1,1}([0,T],{\mathbb R}^m)$    there exists a unique solution to \eqref{DIS}, in consequence of standard results on delay control systems  (see e.g. \cite[Th. 4.1]{VB}).  
The hypothesis that  the final time $T=Nh$ can be removed by redefining $f$ and the $g_j$'s as discontinuous functions on an extended time interval, as we show in future work.

For any  interval $[s_1,s_2]\subset{\mathbb R}$, we denote  $C^*([s_1,s_2],{\mathbb R}^m)$  the set of signed, finite and regular vector-valued measures $\mu$ from the Borel subsets of $[s_1,s_2]$ to $\R^m$, $|\mu|$  the total variation measure,\footnote{As is customary,  $|\mu|\in C^\oplus([0,T])$ and  
$
|\mu| = \sum_{j=1}^l [\mu^j_+ + \mu^j_-],
$
where $\mu^j_+, \mu^j_-\in C^\oplus([0,T])$ are the elements of the Jordan decomposition of the $j$-th component $\mu^j$ of $\mu$.} $\mu^c$ the continuous component of the measure. Given a sequence $(\mu_i)$ and $\mu$, we write   $\mu_i\weak \mu$ if  
$
\lim_i \int_{[0,T]} \psi(t) \mu^j_i(dt) = \int_{[0,T]} \psi(t) \mu^j(dt)$ for all $\psi\in  C([0,T], {\mathbb R})$ and $ j=1,\dots,m$. $C^\oplus([s_1,s_2]):=C^*([s_1,s_2],[0,+\infty[)$.
 $BV^+([s_1,s_2],{\mathbb R}^m)$ is  the subset of BV functions from $[s_1,s_2]$ to ${\mathbb R}^m$,  right continuous on  $]s_1,s_2[$ and vanishing at $s_1$. We set 
$$
\begin{array}{l}
\ds C_\K^*([s_1,s_2]):=\left\{\mu\in C^*([s_1,s_2],{\mathbb R}^m): \ \ \text{range}(\mu)\subset\K\right\}, \\[1.0ex]
 BV_\K^+([s_1,s_2]):=\left\{u\in BV^+([s_1,s_2],{\mathbb R}^m):   \  \ \ du\in C_\K^*([s_1,s_2])\right\}.
%\ds W^{1,1}_\K(I):=\left\{u\in W^{1,1}(I,{\mathbb R}^m):   \ \ u(0)=0,   \ \ \frac{du}{dt}(t)\in\K \ \text{a.e. $t\in I$}\right\}.
\end{array}
$$
Note that assigning a measure $\mu\in C_\K^*([s_1,s_2])$ is equivalent to fixing   $u\in BV_\K^+([s_1,s_2])$, such that $u(s_1)=0$ and $u(t)  =  \int_{[s_1,t]} \mu(ds)$ for all $t\in]s_1,s_2]$. 

Given $\mu\in C_\K^*([0,T])$, we define the subset  $\V_c(\mu)\subset C^\oplus([0,T])$ as
%of measures $\nu$ such that
%\footnote{Given a sequence $(\mu_i)\subset C^*([0,T],{\mathbb R}^k)$ and $\mu\in C^*([0,T],{\mathbb R}^k)$, we write   $\mu_i\weak \mu$ if  
%$
%\lim_i \int_{[0,T]} \psi(t) \mu^j_i(dt) = \int_{[0,T]} \psi(t) \mu^j(dt)$ for all $\psi\in  C([0,T], {\mathbb R})$ and $ j=1,\dots,k$.}
\bel{Vmu}
\V_c(\mu):=\left\{\nu: \ \  \nu^c=|\mu^c|,  \   \ \exists(\mu_i)\subset C_\K^*([0,T]) \ \text{  s.t. } \  (\mu_i, |\mu_i|)\weak (\mu,\nu)\right\}.
\eeq 
 In general, if $\nu\in\V_c(\mu)$    one has $\nu\ge|\mu|$ and $|\mu|$ always belongs to $\V_c(\mu)$. Actually, if the range of $\mu$ is contained in a closed convex cone which belongs to one of the orthants of ${\mathbb R}^m$, then $\V_c(\mu)=\{|\mu|\}$. 
 
 In the following, for any  $w\in{\mathbb R}^m$ we define $\|w\|:=\sum_{j=1}^m|w^j|$.

% \vsm
%For every closed interval $I\subset[0,T]$ and $\mu\in C^*(I, \K)$,  define the set $\V(I,\mu)$ as follows
%\bel{Vmu}
%\V(I,\mu):=\{ \nu\in C^\oplus(I) \text{ : } \exists(\mu_i)\subset C^*(I, \K) \text{ s.t. } (\mu_i, |\mu_i|)\weak (\mu,\nu)\}.
%\eeq
%In general, one has $\nu\ge|\mu|$. If
% the range of $\mu$ is contained in a closed convex cone which belongs to one of the orthants of ${\mathbb R}^m$, then $\V(\mu)=\{|\mu|\}$. 
% In the following,  we simply write $\{ x(t-kh)\}_{k }$ in place of $\{ x(t-kh)\}_{k=0}^N=(x(t), x(t-h), \dots, x(t-Nh))$.
\begin{definition} \label{imp_control}
An {\em impulse control} $(\mu, \nu, \{\w^r\}_{ r \in [0,h]})$ for control system \eqref{DIS}  comprises two   Borel measures $\mu  \in C_\K^*([0,T])$,   $\nu\in\V_c(\mu)$, and a family of essentially bounded, measurable functions $\w^r =(\w^r_1,\dots, \w^r_N ) : [0,1] \rightarrow \K^N$  parameterized by $r \in [0,h]$,  with the following properties: 
\begin{itemize}
\item[(i)] For any $r \in [0,h]$, 
$
 \sum_{l=1}^N \|\w^r_l(s)\|  =  \sum_{l=1}^N \int_0^1 \|\w^r_l (s)\| ds, \ \mbox{ a.e. } s \in [0,1];
$ 
 \item[(ii.1)]  for any $r\in]0,h[$ and $l=1,\dots,N$,   $\int_0^1\|\w^r_l  (s)\|\,ds  = \nu  (\{r+(l-1)h\}) $,
 
 \item[(ii.2)]  for any   $l=0,\dots,N$,   $\int_0^1[\|\w^h_l  (s)\|+\|\w^0_{l+1}  (s)\|]\,ds  = \nu  (\{lh\}) $,
 
\item[(iii.1)]  for any $r\in]0,h[$ and $l=1,\dots,N$,  $\int_0^1 (\w^r_l)^j (s) ds =  \mu^j (\{r+(l-1)h\})$ for all $j=1,\dots, m$,
 
\item[(iii.2)]  for any   $l=0,\dots,N$,  $\int_0^1 [(\w^h_l)^j (s)+(\w^0_{l+1})^j] ds =  \mu^j (\{lh\})$, where $(\w_0^h)^j\equiv0$ and $(\w_{N+1}^0)^j\equiv0$, for all $j=1,\dots, m$.
\end{itemize}
We call a  family of functions $\{\w^r\}_{ r \in [0,h]}$  as above  {\em attached} to $(\mu,\nu)$.
An impulse control $(\mu, \nu, \{\w^r\}_{ r \in [0,h]})$ is   {\em strict sense} if $\nu$
 is absolutely continuous with respect to the Lebesgue measure.\footnote{By Def. \ref{imp_control},  the atoms of $\mu$ are always a subset of the atoms of $\nu$. However, in the vector-valued case  we can   have impulse controls in which $\nu$ has  atoms while $\mu$ does not.}  We also call strict sense control any function $u\in W^{1,1}([0,T],{\mathbb R}^m)$ with $du$ ranging on $\K$ and $u(0)=0$, which is clearly associated with the strict sense  impulse control $(du, |du|, \{\w^r\equiv 0\}_{ r \in [0,h]})$.\footnote{Since $du$ has no atoms,   by Def. \ref{imp_control},(i),  $\w^r_i= 0$ a.e.   for any $r\in[0,h]$ and  $i=1,\dots,N$.} 
\end{definition}
Def. \ref{imp_control} extends the notion of impulse control first introduced in \cite{FMV1} for delay impulse systems with nonnegative scalar  measures. It is inspired by the notion  previously introduced for delay-free  impulse systems with vector-valued controls in \cite{K06,KDPS15},  where however condition $\nu^c=|\mu^c|$ in the definition of $\V_c(\mu)$ is not present.  As  shown in \cite{FM224},  even in the absence of  delays
 this  assumption is  in fact crucial to prove the well-posedness of the  impulse process definition.  
\begin{definition}%\label{imp_proc}
Take an impulse control  $(\mu, \nu, \{\w^r\}_{r \in [0,h]})$. Then, $(x,v) \in  BV ([-Mh,T],{\mathbb R}^n)\times BV([0,T],{\mathbb R})$ is   an {\em impulse solution} to \eqref{DIS}  associated with $(\mu, \nu,\{\w^r\}_{ r \in [0,h]})$, and   $(\mu, \nu, \{\w^r\}_{r \in [0,h]},x,v)$  is called an {\em impulse process}, if  $x(t)=\xi_0(t)$ for $t\in[-Mh,0[$,  $(x,v)(0)= (\xb,0)$,   
$(x,v)((l-1)h)=(\zeta_l^0,\theta_l^0) (1)$ for $l=2, \dots, N+1$, and, moreover
%\left\{
%\begin{array}{l}
%(\xb,0)  \qquad\mbox{if } l=1
%\\
%(\zeta_l^0,\theta_l^0) (1)\qquad \mbox{if } l >1
%\end{array}\right.
%$$
  for each $l =1,\ldots,N$ and $t \in ](l-1)h, lh[,$\footnote{For any function $z$ defined on an interval $I$ and $t\in I$,   $z^-(t)$ and $z^+(t)$ denote the left and  right limit of $z$ at $t$, respectively.}
$$
\begin{array}{l} 
   x(t) = \zeta^0_l (1) +\int_{(l-1)h}^t  f(t', \{x(t'-kh)\}_k)dt'  \\[1.0ex]
 \hspace{0.75 in} + \sum_{j=1}^m \int_{[(l-1)h,t]}g_j(t',  \{x(t'-kh)\}_k)d(\mu^{c})^j(t')
\\ [1.0ex]
 \hspace{0.75 in}+ \underset{r \in ]0,\, t- (l-1)h] }{\sum} ( \zeta^r_l (1) - x^-(r +(l-1)h)) ,%+(\zeta^0_{i}(1)- \zeta^h_{i-1}(1)) 
 \\[1.5ex]
  v(t) = \theta^0_l (1) + \int_{[(l-1)h,t]} d \nu^{c}(t')+ \underset{r \in ]0,\, t- (l-1)h] }{\sum} ( \theta^r_l (1) - v^-((r +(l-1)h)),
 \end{array}
$$
%and  $(x,v)(T)=(\zeta^h_N,\theta^h_N) (1)$.
where, for  any $r \in [0,h]$,      $\zeta^r_l:[0,1]\rightarrow {\mathbb R}^n$ solves  the  Cauchy problem
\begin{equation}\label{jump_x1}
 \left\{\begin{array}{l}
\ds\frac{d\zeta^r_l}{ds}  (s)  \,=\,  \sum_{j=1}^mg_j(r + (l-1)h,\{ \zeta^r_{l-k} (s)\}_k )\,(\w_l^{r})^j(s) \quad \mbox{ a.e. } s \in [0,1], \\
\zeta^r_l (0) =
\left\{
\begin{array}{ll}
x^-(r + (l-1)h) & \mbox{if } r \in ]0,h]
\\
\zeta_{l-1}^h (1)&\mbox{if } r =0 \,,
\end{array}
\right.
 \end{array}
\right.
\end{equation}
%with boundary conditions
%\begin{equation}
%\label{jump_x2}
%\zeta^r_l (0) =
%\left\{
%\begin{array}{ll}
%x^-(r + (l-1)h) & \mbox{if } r \in ]0,h]
%\\
%\zeta_{l-1}^h (1)&\mbox{if } r =0 \,,
%\end{array}
%\right.
%\end{equation}
while $\theta^r_l:[0,1] \rightarrow [0,+\infty)$ is given by
\bel{jump_v1}
 \left\{\begin{array}{l} \theta^r_l  (s)  \,=\,\theta^r_l (0)+\int_0^s\|\w_l^{r}(s')\|\,ds'  \quad \mbox{ for all } s \in [0,1],\\
\text{where} \quad 
%\begin{equation}
%\label{jump_v2}
\theta^r_l (0) =
\left\{
\begin{array}{ll}
v^-(r + (l-1)h) & \mbox{if } r \in ]0,h]
\\
\theta_{l-1}^h (1)&\mbox{if } r =0 \,.
\end{array}
\right.
\end{array}
\right.
\end{equation}
In these relations, $\theta^r_0\equiv 0$, 
$
\zeta^r_0(s) :=  \left\{
\begin{array}{ll}
\xi_0(r-h) &\mbox{if  $s\in [0,1[$ or $r\in[0,h[$}
\\
\xb &\mbox{if $s=1$ and $r=h$}
\end{array}
\right.,$ and 
$\zeta^r_{-a}\equiv\xi_0(r-(a+1)h)$ for  $a=1,\dots,M-1$. %\mbox{ for all } r \in [0,h], $ 
 %,  the left limit $x((r + (j-1)h)^-)$ is interpreted as $x(( (j-1)h)^-)$ if  $\rho =0$. 
 %and $\mu^{c}$, $\nu^c$ denote the continuous components of the measures $\mu$, $\nu$, respectively. 
We call an impulse process $(\mu,  \{\w^{r}\}_{ r \in [0,h]},x,v)$ corresponding to a strict sense control $u$,  written briefly as $(u,x,v)$,  a {\em  strict sense process}.\footnote{In this circumstance  $(x,v)$, which we refer to as a  strict sense solution, is the classical   $W^{1,1}$  solution to the delayed differential system \eqref{DIS} associated with $u$ (and $v$ is  the total variation function of $u$).}
 \end{definition}
 \begin{remark}\label{R_tv} Given an impulse process $(\mu,  \{\w^{r}\}_{ r \in [0,h]},x,v)$, by the very definition of $v$ it follows that $v(t)=\nu([0,t])$ for every $t\in]0,T]$.  Moreover, the impulse solution $(x,v)$ has bounded variation and is right continuous on $]0,T]$.  %Hence, in particular,  $v(T)$ coincides with the total variation of $\nu$ on $[0,T]$.
 \end{remark}
%
%\mm{I've inserted in all results the total variation $v$ as part of the trajectory, but of course we have to check if this can be done everywhere} 
 
We conclude this section by assembling  the main properties of impulse processes, whose proofs will be given in Section \ref{Smain}. %,  after the crucial equivalence result established in the next section. 
\begin{proposition}\label{prop_uniq} 
Corresponding to any impulse control $(\mu, \nu, \{\w^r\}_{ r \in [0,h]})$ there exists a corresponding impulse solution $(x,v)$ to \eqref{DIS} and it is unique.
\end{proposition}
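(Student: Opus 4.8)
The plan is to build the impulse solution by a \emph{marching} argument over the intervals $[(l-1)h,lh]$, $l=1,\dots,N$, which is available precisely because the delays are constant and $T=Nh$. The structural fact behind it is that on $](l-1)h,lh[$ every delayed argument $x(t'-kh)$ with $k\ge1$ lies in $[-Mh,(l-1)h]$ — hence in the past history $\xi_0$ or in an already‑treated interval — and that in the jump Cauchy problem \eqref{jump_x1} only $\zeta^r_l$ is unknown, since $\zeta^r_{l-k}$ with $k\ge1$ is either prescribed through $\xi_0$ (when $l-k\le0$) or has been produced at step $l-k$. One argues by induction: assuming $x$ constructed on $[-Mh,(l-1)h[$ together with all $\zeta^r_{l'},\theta^r_{l'}$ for $l'<l$, the left limit $x^-((l-1)h)$ available, and $x((l-1)h)=\zeta^0_l(1)$ fixed by solving the Cauchy problem for $\zeta^0_l$ (now fully determined, with initial value $\zeta^h_{l-1}(1)$), the whole claim reduces to showing that there is one and only one admissible extension of $(x,v)$ to $](l-1)h,lh[$.

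On a single interval, writing $y(\tau):=x((l-1)h+\tau)$, this becomes a \emph{measure-driven Cauchy problem}: for $\tau\in[0,h[$,
\[
y(\tau)=y(0)+\int_0^\tau F(\tau',y(\tau'))\,d\tau'+\sum_{j=1}^m\int_{[0,\tau]}G_j(\tau',y(\tau'))\,d(\hat\mu^c)^j(\tau')+\sum_{r\in]0,\tau]}\big(\zeta^r_l(1)-y^-(r)\big),
\]
where $F,G_j$ absorb the (known) delayed arguments, $\hat\mu^c$ is the translate of $\mu^c$, and for each atom $r$ the function $\zeta^r_l$ solves \eqref{jump_x1} with initial value $y^-(r)$. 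I would treat this exactly as in the graph‑completion method: introduce the $1$-Lipschitz parametrization of the graph completion built from $\mu^c$ and the discrete part of $\nu$, each atom of $\nu$ in $](l-1)h,lh[$ being opened into an interval of positive length on which the motion is dictated by the attached control $\w^r_l$ via \eqref{jump_x1}. On the new parameter the equation becomes a conventional Carath\'eodory control system, affine in a bounded control, with coefficients $f,g_j\in C^1$; the boundedness of $f$ and the $g_j$ yields an a priori bound for $x$ and for every $\zeta^r_l$ — using $|\zeta^r_l(1)-\zeta^r_l(0)|\le\|g\|_\infty\,\nu(\{r+(l-1)h\})$ and $\sum_r\nu(\{r+(l-1)h\})\le\nu([0,T])$ — which confines the dynamics to a compact set on which $g_j$ is Lipschitz. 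Standard Carath\'eodory existence--uniqueness then produces a unique absolutely continuous solution on the new parameter; composing with the (discontinuous) right inverse of the reparametrization gives a unique BV function $y$ on $[0,h[$, whose jump series converges absolutely by the same bound. The component $v$ — equivalently each $\theta^r_l$ — is \emph{explicit}, through \eqref{jump_v1} and the $d\nu^c$-term, so it poses no difficulty, and $v(t)=\nu([0,t])$ is Remark~\ref{R_tv}. It remains to check the grid‑point matching at $t=lh$: by (ii.2)--(iii.2) the atom $\nu(\{lh\})$ splits between $\w^h_l$ and $\w^0_{l+1}$, so $x^-(lh)=\zeta^h_l(0)$, $\zeta^0_{l+1}(0)=\zeta^h_l(1)$ and $x(lh)=\zeta^0_{l+1}(1)$, which is exactly the data handed to step $l+1$.

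The hard part is this single‑interval step, and inside it the interlocking between the left limits $y^-(r)$ that serve as initial data in \eqref{jump_x1} and the jump terms $\zeta^r_l(1)-y^-(r)$ that re‑enter the equation for $y$, compounded by the possible accumulation of atoms of $\nu$ and by the vector‑valued novelty that $\nu$ may carry atoms where $\mu$ does not. The reparametrization dissolves this by linearly ordering all atoms along a single parameter and turning the construction into one ordinary control system; the genuine work is to verify that the reparametrization is legitimate — that its right inverse is well‑defined, that the composition is BV, and that nothing is lost across the countably many blow‑up intervals. Since that is exactly the content of the one‑to‑one correspondence with the conventional time‑space delay system established in Section~\ref{Smain}, I would in fact derive Proposition~\ref{prop_uniq} as a corollary of it: existence and uniqueness for that conventional (delayed) time‑space system is classical (cf.\ \cite[Th.~4.1]{VB}), and the bijection transports it directly to impulse processes.
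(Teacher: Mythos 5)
Your proposal is correct and, in its final form, takes essentially the same route as the paper: after sketching the interval-by-interval construction, you derive the result from the one-to-one correspondence with the conventional time--space system of Theorem \ref{equivalence_th}, which is exactly what the paper does (the extended control built in part (i) depends only on $(\mu,\nu,\{\w^r\})$, the auxiliary system \eqref{aux_system} has a unique $W^{1,1}$ solution, and \eqref{equiv_tr} then forces any two impulse solutions to coincide). The preliminary marching/reparametrization discussion is a sound account of why that equivalence works, but it is not needed once you invoke the theorem.
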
 

 \begin{proposition}\label{prop_density}
Take an impulse process  $(\mu, \nu, \{\w^r\}_{r \in [0,h]},x,v)$. Then  there exists a sequence of strict sense processes $(u_i,x_i,v_i)$, such that
 %\footnote{We recall that for any $i$ there  exists some $\w_i\in L^1$ such that $\mu_i(dt)=\w_i(t)dt$),  such that 
$$
(du_i,|du_i|=dv_i) \weak  (\mu,\nu),\quad dx_i \weak dx,  \quad (x_i,v_i)(t)\to (x,v)(t) \ \ \forall t\in E,
$$  
where $E$ is a full measure subset of $[0,T]$ which contains $0$ and $T$. %Furthermore, when the stronger assumption  {\rm  (H3)$^*$} is met,  the above statement holds for {\em all}  impulse processes  $(\mu,\alpha,x)$.
 \end{proposition}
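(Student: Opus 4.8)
\emph{Strategy.} The plan is to transport the statement to the time-space reformulation of Section~\ref{Smain}: represent the given impulse process by a process of the conventional delay time-space control system of Section~\ref{S1}, approximate its parameterization by strictly time-advancing ones, read off the corresponding strict sense processes of \eqref{DIS}, and pass to the limit. Concretely, by the correspondence of Section~\ref{Smain} the impulse process $(\mu,\nu,\{\w^r\}_{r\in[0,h]},x,v)$ is represented, on a fixed parameter interval $[0,S]$, by a nondecreasing, $1$-Lipschitz, surjective ``time'' component $\varphi_0\colon[0,S]\to[0,T]$, a Lipschitz component $\varphi$ with $\dot\varphi(s)\in\K$ a.e.\ (the attached controls $\{\w^r\}$ being already incorporated into $\varphi$, so that $\|\dot\varphi\|$ plays the role of the ``length'' rate), and the associated absolutely continuous solution $(y_0,y)$ of the time-space dynamics, in such a way that $x=y\circ\sigma$, $v=y_0\circ\sigma$ and $u:=\varphi\circ\sigma$ is the primitive of $\mu$, where $\sigma$ denotes the canonical right inverse of $\varphi_0$. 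The decisive feature --- which is what makes the argument work in the presence of the delays --- is that the delays of the time-space system are constant in $s$, while the atoms of $(\mu,\nu)$, equivalently the discontinuities of $(x,v)$, correspond exactly to the constancy intervals of $\varphi_0$, i.e.\ to the at most countably many discontinuities of $\sigma$.

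\emph{Approximation in the time-space picture.} For $i\in\N$ we would set
\[
\varphi_0^i(s):=\tfrac{i}{i+1}\,\varphi_0(s)+\tfrac{T}{(i+1)S}\,s,\qquad s\in[0,S].
\]
Each $\varphi_0^i$ is a strictly increasing bi-Lipschitz bijection of $[0,S]$ onto $[0,T]$, with $\varphi_0^i\to\varphi_0$ uniformly, $\dot\varphi_0^i\to\dot\varphi_0$ in $L^1([0,S])$ and $\dot\varphi_0^i\ge T/((i+1)S)>0$. Keeping $\varphi$ unchanged --- possibly after a harmless reparametrization of $[0,S]$ restoring the arc-length normalization --- and applying the correspondence of Section~\ref{Smain} once more: since $\varphi_0^i$ has no constancy interval, the time-space process built from $(\varphi_0^i,\varphi)$ corresponds to a \emph{strict sense} process $(u_i,x_i,v_i)$ of \eqref{DIS}, with $u_i=\varphi\circ\sigma_i$, $\sigma_i:=(\varphi_0^i)^{-1}$; thus $u_i(0)=0$, $u_i\in W^{1,1}([0,T],\R^m)$, $\dot u_i(t)=\dot\sigma_i(t)\,\dot\varphi(\sigma_i(t))\in\K$ for a.e.\ $t$ (because $\K$ is a cone and $\dot\sigma_i>0$), $x_i=y^i\circ\sigma_i$ and $v_i=y_0^i\circ\sigma_i$, where $(y_0^i,y^i)$ solves on $[0,S]$ the time-space dynamics associated with $(\varphi_0^i,\varphi)$.

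\emph{Passage to the limit.} Since $\varphi_0^i\to\varphi_0$ uniformly and $\dot\varphi_0^i\to\dot\varphi_0$ in $L^1$, and $f,g_1,\dots,g_m$ are bounded and of class $C^1$, continuous dependence for the delay time-space Cauchy problem on the \emph{fixed} interval $[0,S]$ gives $y^i\to y$ and $y_0^i\to y_0$ uniformly. The $\varphi_0^i$ being continuous, nondecreasing and uniformly convergent to $\varphi_0$, one has $\sigma_i(t)\to\sigma(t)$ at each continuity point of $\sigma$, together with $\sigma_i(0)=0=\sigma(0)$ and $\sigma_i(T)=S=\sigma(T)$; hence $E:=\{0,T\}\cup\{t\in[0,T]:\sigma\text{ continuous at }t\}$ is a full-measure subset of $[0,T]$ containing $0$ and $T$, and
\[
(x_i,v_i)(t)=(y^i,y_0^i)(\sigma_i(t))\longrightarrow(y,y_0)(\sigma(t))=(x,v)(t),\qquad u_i(t)=\varphi(\sigma_i(t))\longrightarrow u(t)\qquad(t\in E).
\]
Moreover $\mathrm{Var}(x_i)\le\|f\|_\infty T+\max_j\|g_j\|_\infty\,v_i(T)$ and $\sup_i v_i(T)<\infty$ (as $v_i(T)\to v(T)$), so $(x_i)$ and $(u_i,v_i)$ are of uniformly bounded variation; combining this with the pointwise convergence on $E$ --- in particular at $T$ --- Helly's theorem, and the fact that $u$, $v$ are the primitives of $\mu$, $\nu$, we conclude $dx_i\weak dx$ and $(du_i,\,|du_i|=dv_i)\weak(\mu,\nu)$.

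\emph{Main obstacle.} Once one sits in the time-space picture the limit passage is essentially classical; the genuinely delicate point is the interplay between the time change $\sigma_i$ and the state delays --- namely, that $y^i\circ\sigma_i$ really solves \eqref{DIS} for the control $u_i$ and that its delayed evaluations match the constant delays of the time-space system. This, however, is exactly what the correspondence of Section~\ref{Smain} supplies, the modified graph completion having been engineered to keep the delays constant; granting it, the construction above is routine. The remaining points --- the arc-length renormalization when $\varphi$ is held fixed, and the membership $\{0,T\}\subset E$ --- are straightforward bookkeeping.
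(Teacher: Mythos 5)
Your proposal is correct and follows essentially the same route as the paper: pass to the canonical extended (time-space) process via Theorem \ref{equivalence_th}, replace $\varphi_0$ by strictly increasing approximations with uniformly convergent Lipschitz data, read off strict sense processes through the equivalence, and conclude by continuous dependence plus pointwise convergence of the right inverses at continuity points and the standard BV/weak* convergence lemma. The only real difference is that you give an explicit convex-combination formula for the approximating time changes where the paper cites \cite{AR15,MS218}, and you work on a single interval $[0,S]\to[0,T]$ rather than the $N$-fold $[0,h]$ structure; both are harmless.
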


We conclude with a  compactness result for the subsets of impulse trajectories corresponding to impulse controls  with equibounded total variation.
\begin{proposition} \label{robust_th}
Let  $(\mu_i, \nu_i, \{\w_i^r\}_{r \in [0,h]}, x_i, v_i)$ be   a sequence of impulse processes such that the sequence $(\nu_i)$ is uniformly bounded in total variation by some $C>0$.  If  the cone $\K$ is convex,  then     there exist  an impulse process $(\mu, \nu, \{\w^r\}_{r \in [0,h]},x,v)$ and a full measure subset  $E\subseteq [0,T]$ with  $0,T\in E$,  such that, along a subsequence, 
$$
\mu_i\weak \mu, \quad x_i (t) \rightarrow x(t) \ \ \forall t\in E, \quad v(T)\leq C.
$$ 
 Moreover, if the stronger condition that the set  $\{(w,\|w\|): \  w\in \K\}$  is convex holds, then also $v_i(t)\to v(t)$ for all $t\in E$.
 \end{proposition}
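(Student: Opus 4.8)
The strategy is to transfer the problem to the conventional (delay) time-space control system of Section~\ref{S1}, using the one-to-one correspondence with impulse processes proved in Section~\ref{Smain}, and to carry out a classical compactness-of-trajectories argument there, where the controls range in a fixed compact set and the convexity of $\K$ can be brought to bear. Concretely, to each impulse process $(\mu_i,\nu_i,\{\w_i^r\}_{r\in[0,h]},x_i,v_i)$ I first attach, via Section~\ref{Smain}, a process of the time-space system: a canonical (``arc-length type'') reparametrization together with the associated state arcs $(\hat t_i,\hat x_i,\hat v_i,\dots)$ and bounded measurable controls. Since $\nu_i([0,T])\le C$ for every $i$, the total parameter length, which is of order $T+\nu_i([0,T])$, is bounded by a constant $S$ independent of $i$; extending the arcs by constants I may assume they are all defined on $[0,S]$. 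These arcs are $1$-Lipschitz with uniformly bounded initial data, hence equibounded and equicontinuous, and the controls feeding the system take values in the fixed compact set determined by $\K$ and the normalization.

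By the Arzel\`a--Ascoli theorem, along a subsequence $(\hat t_i,\hat x_i,\hat v_i,\dots)$ converges uniformly on $[0,S]$ to a Lipschitz limit $(\hat t,\hat x,\hat v,\dots)$, while the associated controls converge weakly-$*$ in $L^\infty$ to an admissible control. Here convexity of $\K$ enters: it makes convex (and compact) the velocity set governing the $(\hat t,\hat x,\dots)$ components of the time-space dynamics, so a standard closure theorem for differential inclusions, adapted to the delay setting as in \cite{VB,V1}, shows that the limit is again a trajectory of the time-space system for some admissible control, the algebraic relations encoding the canonical normalization and the endpoint data being stable under uniform convergence. (The variation-tracking component $\hat v$, by contrast, is for the moment only controlled from below, since $w\mapsto\|w\|$ is merely convex.) Applying the correspondence of Section~\ref{Smain} in the reverse direction --- reparametrizing to arc length if necessary --- I obtain from this limit process an impulse process $(\mu,\nu,\{\w^r\}_{r\in[0,h]},x,v)$.

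It remains to extract the stated convergences. Passing to a further subsequence, weak-$*$ compactness of measures of total variation $\le C$ gives $\mu_i\weak\mu'$ and $\nu_i\weak\nu'$; combining the relations tying $\mu_i,\nu_i$ to the time-space arcs with the uniform convergence above one identifies $\mu'=\mu$. As $x_i$ is recovered by composing $\hat x_i$ with the right inverse of $\hat t_i$, the uniform convergence of $\hat x_i$ and of the time changes yields $x_i(t)\to x(t)$ at every continuity point $t$ of the limiting time change; such points form a full measure set $E$, and $0,T\in E$ because all $\hat t_i$ start at $0$ and attain $T$, so the endpoint behaviour is under control. By Remark~\ref{R_tv}, $v(T)=\nu([0,T])$, and weak-$*$ lower semicontinuity of the mass of positive measures gives $\nu([0,T])\le\liminf_i\nu_i([0,T])\le C$. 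For the last assertion, if $\{(w,\|w\|):w\in\K\}$ is convex then the velocity set of the full time-space system, including the $\hat v$-component, is convex as well, so the closure argument also applies to $\hat v$; then $\hat v$ is the genuine limit of $\hat v_i$, whence $\nu=\nu'$, and $v_i(t)=\nu_i([0,t])\to\nu([0,t])=v(t)$ for $t\in E$.

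The main obstacle is the passage to the limit under the weaker hypothesis that $\K$ is merely convex: chattering in the $u_i$ can inflate $\sum_j|\dot u_i^j|$, so the weak-$*$ limit $\nu'$ of $\nu_i$ may strictly dominate the measure $\nu$ produced by the limiting time-space trajectory, and one has to verify that the object returned by the inverse correspondence really is an impulse process in the sense of Definition~\ref{imp_control} --- in particular that the reconstructed $\nu$ lies in $\V_c(\mu)$ (the approximation furnished by Proposition~\ref{prop_density} being the natural tool here) and that the limiting attached family $\{\w^r\}_{r\in[0,h]}$ satisfies all of (i)--(iii) relative to this $\mu$ and $\nu$, the equidistribution condition~(i) and the atom-matching conditions~(ii)--(iii) being the ones most at risk. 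A secondary technical point is to arrange that the exceptional null set avoids $\{0,T\}$, which again relies on the uniform control of the time-change functions near the endpoints of the common parameter interval.
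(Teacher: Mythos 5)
Your proposal is correct and follows essentially the same route as the paper: pass to the canonical extended (time-space) processes via the equivalence theorem, apply a compactness-of-trajectories argument with the convexity of $\K$ while excluding the variation component $\beta$ (which is only lower semicontinuous under weak $L^1$ convergence of the controls), recover $v(T)\le C$ by lower semicontinuity, transfer back via the inverse correspondence, and use the stronger convexity of $\{(w,\|w\|):w\in\K\}$ only for the convergence of $v_i$. The ``main obstacle'' you flag at the end (that the reconstructed limit is a genuine impulse process, with $\nu\in\V_c(\mu)$ and conditions (i)--(iii) on the attached family) is exactly what Theorem \ref{equivalence_th}(ii) already delivers, and the only minor refinement the paper makes is that the exceptional countable set must exclude the discontinuity points of \emph{all} the time changes $\tilde\sigma_i$, not only those of the limit.
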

 
 Note that when $\K$ is   one of the orthants,    $\{(w,\|w\|): \  w\in \K\}$  is convex.

%\begin{remark}\label{Rconvex}
%If the cone $\K$   satisfies the following condition:
%{\em
%\begin{itemize} 
%\item[{\bf (C)}] the set $\{(w,\|w\|): \  w\in \K\}$  is convex,
%\end{itemize}}
%\noindent then, from the proof of Prop. \ref{robust_th} in Appendix \ref{S_proofs}  it is easy to see that a stronger version of the previous result holds, in which not only $x_i(t)\to x(t)$, but also $v_i(t)\to v(t)$  in a full measure subset of $[0,T]$ that includes $0$ and $T$. Condition {\bf (C)} is satisfied, for instance, when $\K$ coincides with one of the orthants.
%\end{remark}
% 
%%%%%%%%%%%%%%%%%%%%%%%%%%%%%%%%%%%%%%%%%%%%%%%%%%%%

%As we will prove in Section \ref{Smain}, in this section we propose an equivalent representation of impulse  controls and solutions to \eqref{DIS},  which is obtained   by linking them to  suitable Lipschitz continuous `completions' of their graphs.  
%%  The new  concept of solution, to which we will refer to as  a {\em graph completion solution}, in short, a {\em g.c. solution}, extends  the so-called {\em graph completion}   approach (see  \cite{BR88,Mi94,MR95}) to impulse control systems with time delays.
%
 
 \section{Graph Completions and Graph Completion Solutions}\label{S1}
As discussed in the Introduction,  the graph completion technique developed for delay-free impulse control systems  (see \cite{BR88,MiRu}) cannot  be directly applied in the presence of time delays. To illustrate our  approach,  let us  first consider a strict sense  control $u\in W^{1,1}([0,T])$ with $u(0)=0$ and $du\in\K$.  Inspired by the   `method of steps'  for conventional delay systems (see e.g. \cite{Driver}) we define  the $N$-uple  $(u_1,u_2,\dots,u_{N})(t):=(u(t), u(t+h), \dots,u(t+(N-1)h)$,    $t\in[0,h]$,  and choose a Lipschitz continuous parameterization $[0,S]\ni s\mapsto(\varphi_0,\varphi)(s)= (\varphi_0 ,(u_1,u_2,\dots,u_{N})\circ\varphi_0)(s)$  of the graph of $(u_1,u_2,\dots,u_{N})$ on $[0,h]$, with time-change $\varphi_0$  strictly increasing.
%, with time-change $\varphi_0$  strictly increasing,   
%$$
%[0,S]\ni s\mapsto (\varphi_0,\varphi)(s)=(\varphi_0,\varphi_1,\dots,  \varphi_N)(s):= (\varphi_0 ,(u_1,u_2,\dots,u_{N})\circ\varphi_0)(s).
%$$
In this way, we arrive at the following auxiliary (conventional and delay-free) time-space control system 
 %, in which  $\left( \frac{d\varphi_0}{ds},\frac{d\varphi_1}{ds},\dots,\frac{d\varphi_N}{ds}\right)$  can be seen as an $(1+N)$-dimensional $L^1$ control: 
\bel{aux_system} \tag{ACS}
\begin{cases}
\ds\frac{d\tau_l}{ds}(s) = \frac{d\varphi_0}{ds}(s), \\
\ds\frac{dy_l}{ds}(s)= f\left(\tau_l(s), \{y_{l-k}(s)\}_{k}\right) \frac{d\varphi_0}{ds}(s) + \sum_{j=1}^m g_j\left(\tau_l(s), \{y_{l-k}(s)\}_{k}\right) \frac{d\varphi_l^j}{ds}(s), \\
\ds\frac{d\beta_l}{ds}(s)= \left\|\frac{d\varphi_l}{ds}(s)\right\|, \qquad \text{a.e. $s\in[0,S]$,\ \  $l=1,\dots,N$}, \\
%\ds\  \varphi_1(0)=0, \quad \frac{d\varphi_l}{ds}(s)\in\K , \qquad \text{a.e. $s\in[0,S]$, for any $l=1,\dots,N$},\\
%\ds\frac{dy_{-l}}{ds}(s) =0 \quad \text{a.e. $s\in[0,S]$, \ $l=0,\dots,N-1$,} \\
%\mm{\varphi_l(S)=\varphi_{l+1}(0)\qquad l=1,\dots,M-1,} \\
 y_{-a}(s)=\xi_0(\tau_1(s)-(a+1)h) \quad \forall s\in[0,S],  \quad a=0,\dots,M-1, \\ 
%  y_{0}(s)=\xi_0(\tau_1(s)-h) \quad \forall s\in[0,S[, \ \ y_0(S)=\xb, \\
 (\tau_1,y_1,\beta_1)(0)=(0,\xb,0), \\ (\tau_{l},y_{l},\beta_l)(0)=(\tau_{l-1},y_{l-1},\beta_{l-1})(S),  \quad l=2,\dots,N.
\end{cases}
\eeq
Let $(\tau,y,\beta)(s)=(\tau_1,\dots,\tau_N,y_1,\dots,y_N,\beta_1,\dots,\beta_N)$  be the solution to \eqref{aux_system} associated with $(\varphi_0,\varphi)$. Define 
%then $\tau_l(s)=(l-1)\varphi_0(S)+\varphi_0(s)$ for any   $l=1,\dots, N$, where $\varphi_0(S)=h$ by definition;  moreover,  
 $(\tilde \tau,\tilde \varphi, \tilde y,\tilde\beta)$ setting, for any $z\in\{\varphi, \tau,y,\beta\}$,  
 \bel{exsol}
\ds\tilde z(s):=\sum_{l=1}^N z_l(s-(l-1)S)\chi_{_{[(l-1)S,lS[}}(s), \quad
%%\ds\qquad\qquad\qquad\qquad =\sum_{l=1}^N\left[(l-1)h+\varphi_0(s-(l-1)S)\right]\chi_{_{[(l-1)S,lS[}}(s),  \\
%\ds\tilde \varphi(s):=\sum_{l=1}^N\varphi_l(s-(l-1)S)\chi_{_{[(l-1)S,lS[}}(s), \\
%\ds\tilde y(s):=
%%\gf{\sum_{a=0}^{M-1} \xi_0(\tau_1(s+(a+1)S)-(a+1)h) \chi_{[-(a+1)S, -aS[} }+
% \sum_{l=1}^Ny_l(s-(l-1)S)\chi_{_{[(l-1)S,lS[}}(s), \\
%\ds\tilde \beta(s):=\sum_{l=1}^N\beta_l(s-(l-1)S)\chi_{_{[(l-1)S,lS[}}(s), \qquad s\in[0,\tilde S[, 
\ds\tilde S:=NS,  \quad \tilde z(\tilde S)=z_N(S).
\eeq
%\bel{exsol}
% \left\{\begin{array}{l}
%\ds\tilde \tau(s):=\sum_{l=1}^N\tau_l(s-(l-1)S)\chi_{_{[(l-1)S,lS[}}(s) \\
%%\ds\qquad\qquad\qquad\qquad =\sum_{l=1}^N\left[(l-1)h+\varphi_0(s-(l-1)S)\right]\chi_{_{[(l-1)S,lS[}}(s),  \\
%\ds\tilde \varphi(s):=\sum_{l=1}^N\varphi_l(s-(l-1)S)\chi_{_{[(l-1)S,lS[}}(s), \\
%\ds\tilde y(s):=
%%\gf{\sum_{a=0}^{M-1} \xi_0(\tau_1(s+(a+1)S)-(a+1)h) \chi_{[-(a+1)S, -aS[} }+
% \sum_{l=1}^Ny_l(s-(l-1)S)\chi_{_{[(l-1)S,lS[}}(s), \\
%\ds\tilde \beta(s):=\sum_{l=1}^N\beta_l(s-(l-1)S)\chi_{_{[(l-1)S,lS[}}(s), \qquad s\in[0,\tilde S[, \quad  \tilde S:=NS, \\
%\ds (\tilde \tau,\tilde \varphi, \tilde y,\tilde\beta)(\tilde S)=(\tau_N,\varphi_N,y_N,\beta_N)(S), 
%\end{array}\right.
%\eeq
Then, if   $\tilde\sigma$ is the inverse  of $\tilde \tau$,   $(x,v):=(\tilde y,\tilde\beta)\circ\tilde\sigma$ %\gf{for $t\geq 0$ and $x(t)=\tilde y(\sigma(t+(a+1)h)-(a+1)S)=\xi_0(t)$ for $t\in[-Mh,0[$} 
is the solution of  \eqref{DIS} associated with the control  $u$, which in turn coincides with $\tilde \varphi\circ\tilde\sigma$.   Observe that  the time-change  $\tilde\sigma$  is  uniform  on each interval $[(l-1)h,lh]$,  because $\tau_l(s)=(l-1)h+\varphi_0(s)$ for any   $l=1,\dots, N$, and,   at each instant $t$,  depends  on the behaviour of the control $u$  on the whole interval $[0,T]$,  not only   up to $t$.\footnote{ This fact is clarified later, when we observe that we can limit ourselves to consider canonical reparameterizations $(\varphi_0,\varphi)$, which satisfy \eqref{lip1}, so that $\frac{d\varphi_0}{ds}=1-\sum_{l=1}^N\left\|\frac{d\varphi_l}{ds}\right\|$.  }

When $u\in BV$, it is still possible to construct a  Lipschitz continuous parameterization $(\varphi_0,\varphi)$ of the graph of $(u_1,u_2,\dots,u_{N})$ with $\varphi_0$ increasing (but possibly not strictly increasing), solve the corresponding control problem \eqref{aux_system} and use the solution $(\tau,y,\beta)$ to construct a generalized solution of the delayed control system \eqref{DIS}. With this purpose we introduce the following definitions.

 \begin{definition} \label{extended controls} 
For any  $L>0$, $S>0$,  let $\U^L_\K([0,S])$ be the  subset of $L$-Lipschitz continuous  functions    $(\varphi_0,\varphi)=(\varphi_0, \varphi_1,\dots,\varphi_N):[0,S]\to [0,h]\times({\mathbb R}^m)^{N}$ such that 

\begin{itemize}
\item[(i)] if $0\leq r < s \leq S$, then $\varphi_0(r)\leq \varphi_0(s)$; \ $(\varphi_0(0),\varphi_0(S))=(0,h)$;
\vsm
\item[(ii)]  $\varphi_1(0)=0$, \  $\varphi_{l }(0)=\varphi_{l-1}(S)$ for all $l=2,\dots, N$;
\vsm
 \item[(iii)] $\ds\frac{d\varphi_l}{ds}(s)\in\K$  a.e. $s\in[0,S]$, for all $l=1,\dots, N$.
\end{itemize}
We set $\U_\K([0,S]):=\cup_{L>0}\U^L_\K([0,S])$ and call {\em extended controls} %for system  \eqref{aux_system} 
the elements $(S,\varphi_0,\varphi)$  of  $\ds \cup_{S>0}\{S\}\times \U_\K([0,S])$.  
 \end{definition}
 
  \begin{definition} \label{extended processes} 
Given an extended control  $(S,\varphi_0,\varphi)$, the solution $(\tau,y,\beta)\in W^{1,1}([0,S],{\mathbb R}_{\ge0}^N\times({\mathbb R}^n)^N\times{\mathbb R}_{\ge0}^N)$ of \eqref{aux_system} is said to be an {\em extended solution}  corresponding to $(S,\varphi_0,\varphi)$. We refer to  $(S,   \varphi_0,\varphi,\tau,y,\beta)$ as an  {\em extended process}.
 \end{definition}

 \begin{definition}\label{graphcompl_def}
Given $u\in BV^+_\K([0,T])$, 
a {\em graph completion (in short, g.c.)} of $u$ is any pair $(\varphi_0,\varphi)=(\varphi_0, \varphi_1,\dots,\varphi_N)\in  \U_\K([0,S])$ for some $S>0$,  satisfying  (i) $(T,u(T))=(\tilde\tau,\tilde\varphi)(\tilde S)$; (ii) for all $t\in[0,T]$, there exists $s\in[0,\tilde S]$ satisfying $(t,u(t))=(\tilde\tau,\tilde\varphi)(s)$,  for $(\tilde \tau,\tilde \varphi)$ and  $\tilde S$ as in \eqref{exsol}
\end{definition}

% (Note that $(0,u(0))=(\tilde\tau,\tilde\varphi)(0)$  by Def. \ref{graphcompl_def} and the fact that we consider controls $u$ with $u(0)=0$.) 
%Note that $\varphi_0$ is increasing but possibly not strictly increasing and  the total variation of $\tilde\varphi$  may greater than the total variation of $u$. 
 
In the sequel, given a (possibly not strictly) increasing function  $A:[T_1,T_2]\to[S_1,S_2]$, right continuous on $]T_1,T_2]$ and such that $A(T_1)=S_1$, $A(T_2)=S_2$,   the {\em right inverse} of $A$  is the function $B:[S_1,S_2]\to[T_1,T_2]$ such that
$$B(S_1)=T_1, \ \
 B(s)=\inf\{t\in[T_1,T_2]: \ \ A(t)>s\} \ \text{for $s\in]S_1,S_2[$,} \ \  B(S_2)=T_2.
 $$
% The map $B$ is increasing,  right continuous on $]S_1,S_2[$, and it is continuous if and only if $A$ is strictly increasing.  For any $s\in[S_1,S_2]$ one has  $A(B(s))\ge s$  and, when $A$ is continuous at  $B(s)$,    $A(B(s))= s$.  Moreover,   $A$ is the right inverse of $B$. 
%  Finally, if $F:[T_1,T_2]\to{\mathbb R}^n$ is a Borel function, then
%\bel{change_var}
%\int_{[B(s_1),B(s_2)]}F(t) dA(t)=\int_{[s_1,s_2]}F(B(s)) ds   \quad\text{for every $S_1\le s_1<s_2\le S_2$,}
%\eeq
%where $dA(s)$ is the measure associated with the increasing function $A$  (see \cite[Lemmas 2.4, 2.6]{MiRu}).

Given a control $u\in BV^+([0,T],{\mathbb R}^m)$, there  always exists
%among all graph completions of $u$, 
a natural graph completion,  obtained by ``bridging"  the discontinuities of $u$  on the graph  by straight  lines, as in the following  example. 
 
\begin{example} \label{Ex rectilinear gc}
%\mm{Da ricontrollare... Qui tutti i $w_l^0$ sono 0 per $l\ge2$. Se non ricordo male, questo si può fare in un verso, partire dal processo impulsivo, ma avevamo dovuto abbandonare questa semplificazione perchè invece passando da un processo esteso ad un impulsivo di solito i $w_l^0$ non si possono assumere nulli da 2 in poi...}  
Given   $u\in BV^+([0,T],{\mathbb R}^m)$,  
% a graph completion of $u$ always exists. Indeed, 
  let  $\mu\in C^*([0,T],{\mathbb R}^m)$ denote the measure such that   
$u(t) = \int_{[0,t]}   \mu(ds)$ for all $ t\in]0,T]$,  $u(0)=0$,
and consider the function $V:[0,h]\to{\mathbb R}_{\ge0}$, given by
\bel{V}
 V(r):=  \int_{[0,r]} |\mu|(dr')+\sum_{l=2}^{N } \int_{](l-1)h,(l-1)h +r]} |\mu|(dr')  \quad \forall r\in]0,h], \ \ V(0)=0. 
\eeq
Note that, if we set $u_1(r):=u(r)$, $u_2(r):=u(r+h),\dots,u_N(r):=u(r+(N-1)h)$ for any $r\in[0,h]$,\footnote{We have $u_l(0)=u_{l-1}^+(h)=u_{l-1}(h)$ for all $l=2,\dots, N$, because $u$ is right continuous.} then $V$ is sum of the total variations of the $u_l$'s.
%, namely   $V(r)=\sum_{l=1}^NV_{[0,r]}(u_l)$ for any $r\in]0,h]$ and this is true also at $r=0$ if we replace $V(0)$ with $V^+(0)$. 
 Define   the function $r\mapsto \psi(r):= r+V(r)$ for any  $r\in[0,h]$ and set $S:= h+V(h)$.  Since  $\psi:[0,h]\to[0,S]$  is right continuous on $]0,S]$ and   strictly  increasing,  its right inverse  $\varphi_0:[0,S]\to[0,h]$ is continuous, increasing, $\varphi_0(0)=0$,  $\varphi_0(S)=h$, and $\varphi_0$ is constant exactly on the countable number of nondegenerate intervals $[\psi^-(r),\psi^+(r)]$\footnote{We set $\psi^-(0):=0$ and $\psi^+(h):=\psi(h)=S$. Note that $\psi^+(r)=\psi(r)$ for all $r\in]0,h]$.}   corresponding to the discontinuity points of $V$.  
 
 For any  $r\in]0,h]$   at which  $V$ is discontinuous, let $I_r$ denote the (nonempty) subset of $l\in\{1,\dots,N\}$ such that $u$ has a jump  at
 % least one of the times in the set
   $t=r+ (l-1)h$. Then, 
% and define
%\[
%\begin{array}{l} \ds u^-((l-1)h +t):= \lim_{\tau\to ((l-1)h +t)^-} u(\tau) \qquad \forall l\in I_t, \\
% V^-(t):= \lim_{\tau\to t^-} V(\tau), \qquad \psi^-(t):= \lim_{\tau\to t^-} \psi(\tau),
% \end{array}
%\]
\bel{variation}
\sum_{l\in I_r}\|u(r+(l-1)h  ) - u^-(r+(l-1)h )\| =V(r)-V^-(r)=\psi^+(r)-\psi^-(r).
\eeq
If $r=0$,    set $V^-(0):=0$ and $u^-(0):=0$, so that 
$
\|u^+(0)-u^{-}(0)\|= V^+(0)-V^-(0)=\psi^+(0)-\psi^-(0).
$
We can now define a graph completion $(\varphi_0,\varphi)=(\varphi_0, \varphi_1,\dots,\varphi_N)$ of $u$ as follows. We take $S$ and $\varphi_0$ as  above and,
%and  set $\varphi_1(0)=0$ and $\varphi_{l }(0)=\varphi_{l-1}(S)  =u((l-1)h)$ for any $l=2,\dots,N$. 
% \eqref{varphi0} 
  for every $l=1,\dots,N$ and  $s\in\mmN{[}0,S]$, we define $\varphi_l(s)=u(\varphi_0(s)+(l-1)h)$ if $u$ is continuous at $\varphi_0(s)+(l-1)h$; otherwise,  we set $r:=\varphi_0(s)$ and define
\bel{varphil}
\varphi_l(s)=\frac{s-\psi^-(r)}{\psi^+(r)-\psi^-(r)} u^+(r+(l-1)h) + \frac{\psi^+(r)-s}{\psi^+(r)-\psi^-(r)} u^-(r+(l-1)h)
\eeq
for all $s\in [\psi^-(r),\psi^+(r)]$. It is easy to check that such $(\varphi_0,\varphi)$ meets conditions (i)--(ii) of Def. \ref{graphcompl_def}. Moreover, it     satisfies
\bel{lip1}
\frac{d\varphi_0}{ds}(s) + \sum_{l=1}^N \Big\|\frac{d\varphi_l}{ds}(s) \Big\|  =1 \qquad \text{for a.e. $s\in[0,S]$.}
\eeq
Indeed, this relation  follows from the very definitions of $(\varphi_0,\varphi)$ and $V$ when $u$ is continuous at $\varphi_0(s)+(l-1)h$ for any $l=1,\dots,N$. Otherwise, it follows from \eqref{variation}, \eqref{varphil}, since $\frac{d\varphi_0}{ds}(s)=0$ for a.e. $s\in [\psi^-(r),\psi^+(r)]$. %We will refer to this $(\varphi_0,\varphi)=(\varphi_0,  \varphi_1,\dots,\varphi_N)$ as the {\em rectilinear graph completion of $u$}.
\end{example}
 
\begin{definition} \label{gcsol}
Given a control  $u\in BV^+_\K ([0,T])$,
%$\mu$  of range$(\mu)\subset\K$, 
let $(\varphi_0,\varphi)$ be a  g.c.  of $u$ defined on some interval $[0,S]$,    $S>0$, and let $(\tau,  y,\beta)$ be the corresponding $W^{1,1}$ solution of \eqref{aux_system}. Let $(\tilde S, \tilde \tau,\tilde\varphi,\tilde y,\tilde\beta)$ be as in \eqref{exsol}.   We call  the {\em graph completion}, in short {\em g.c.,  solution}   of \eqref{DIS} associated with    $(\varphi_0,\varphi)$,  the BV pair 
%$(x_{(\varphi_0,\varphi)},v_{(\varphi_0,\varphi)}) \in BV([0,T],{\mathbb R}^{n+1})$ given by   
\bel{sol_gen}
(x_{(\varphi_0,\varphi)},v_{(\varphi_0,\varphi)})(t):=(\tilde y,\tilde\beta)(\tilde\sigma(t)) \ \  \text{for $t\in[0,T]$,} %\quad x_{(\varphi_0,\varphi)}(t)=\xi_0(t) \ \ \text{for $t\in[-Mh,0[$}
\eeq
in which $\tilde\sigma:[0,T]\to [0,\tilde S]$ is the right inverse of $\tilde\tau$. 
\end{definition}
System \eqref{aux_system} is {\em rate-independent}, namely
given any strictly increasing, surjective,  Lipschitzean   function $\delta:[0, S]\to [0, \hat S]$ with Lipschitz continuous inverse, then $(\hat S, \hat\varphi_0, \hat\varphi, \hat\tau,\hat y,\hat \beta)$ is an extended  process  if  and only if the process $(S,\varphi_0,\varphi, \tau,y,\beta)$ such that
 $(\varphi_0,\varphi, \tau,y,\beta)(s):=\left(\hat \varphi_0, \hat\varphi, \hat\tau,\hat y,\hat \beta\right)\circ\delta(s)$,  for any $s\in[0,S]$,
%\, \quad\footnote{Since  every $L^1$-equivalence class contains  Borel measurable representatives, we will  tacitly assume that all $L^1$-maps we are considering are Borel measurable.}
is an extended   process.
Owing to this property, as we illustrate in the sequel, we can limit ourselves to considering {\em canonical}  extended controls and {\em canonical} extended processes, defined as extended controls $(S,\varphi_0,\varphi)$ possessing property \eqref{lip1} together with their corresponding extended processes. 

%for any extended process  $(S,\varphi_0,\varphi, \tau,y,\beta)$ we can introduce  a canonical parametrization, using the total variation function of $(\varphi_0,\varphi)$, and then establish an equivalence relation between extended processes with the same canonical parameterization. 
\begin{definition} \label{def_canonical} Given   $(S,\varphi_0,\varphi, \tau,y,\beta)$, we  consider the total variation function of $(\varphi_0,\varphi)$, i.e. 
%\mm{$\delta_c$ looks like the Dirac function...change notation} $
$[0,S]\ni s\mapsto  \delta_c (s):=\int_0^s\Big(\frac{d\varphi_0}{ds}(s')+\sum_{l=1}^N\Big\|\frac{d\varphi_l}{ds}(s')\Big\|\Big)\,ds',
$
  and define  the  {\em canonical parameterization} of  $(S,\varphi_0,\varphi, \tau,y,\beta)$,  as   the extended process
 $$
(S^c,\varphi_0^c,\varphi^c, \tau^c,y^c,\beta^c):= (\delta_c(S),(\varphi_0,\varphi, \tau,y,\beta)\circ \delta_c^{-1}),
 $$ where $\delta_c^{-1}$ is the right inverse of $\delta_c$.\footnote{The function $\delta_c$ might not be strictly increasing, but in view of the   form of the dynamics,  the canonical parametrization is still an extended process with Lipschitz constant 1.} Hence,  we  call {\it equivalent},  and write
$
(\hat S, \hat\varphi_0,\hat\varphi, \hat\tau,\hat y,\hat \beta)  \sim (S,\varphi_0,\varphi, \tau,y,\beta),$
 any two extended   processes with the same canonical parameterization. Clearly, $(\hat\varphi_0,\hat\varphi, \hat\tau,\hat y,\hat \beta)$ and $(\varphi_0,\varphi, \tau,y,\beta)$ are different parameterization of the same curve. 
 \end{definition}
It follows immediately that the canonical parametrization of an extended process is a canonical process and   an extended  process is canonical if and only if it  coincides with its canonical parameterization.    As a corollary, we get the following relationships between the associated g.c. solutions  (see \cite[Sect. 3]{MR95}).
  \begin{proposition} \label{equivalent_graph}
Let   $(S,\varphi_0,\varphi)$ be an extended control.  Then, the   g.c. solution $(x_{(\varphi_0,\varphi)},v_{(\varphi_0,\varphi)})$ of \eqref{DIS}   coincides with   $(x_{(\varphi^c_0,\varphi^c)},v_{(\varphi^c_0,\varphi^c)})$. In particular, setting
$\tilde\delta_c(s)=\sum_{l=1}^N[\delta_c(s-(l-1)S)+(l-1)S^c]\chi_{_{[(l-1)S,lS[}}$  for $s\in[0,\tilde S[$, $\tilde\delta_c(\tilde S)= NS^c$,
we have that $(x_{(\varphi_0,\varphi)},v_{(\varphi_0,\varphi)}) =(\tilde y^c, \tilde\beta^c)\circ(\tilde\delta_c\circ\tilde\sigma)$.\footnote{Here, $\tilde\sigma$  is as in Def. \ref{gcsol} and $\tilde y^c$,  $\tilde\beta^c$ are as in \eqref{exsol},  with $(S^c,\varphi_0^c,\varphi^c, \tau^c,y^c,\beta^c)$  in place of $(S,\varphi_0,\varphi, \tau,y,\beta)$.}  
\end{proposition}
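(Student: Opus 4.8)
The plan is to derive Proposition~\ref{equivalent_graph} from the rate-independence of~\eqref{aux_system} together with a bookkeeping argument that tracks how the segmentation~\eqref{exsol} and the passage to a right inverse interact with the canonical reparameterization $\delta_c$. First I would observe that, although rate-independence has been stated only for strictly increasing time changes with Lipschitz inverse, it still applies to the merely non-decreasing $\delta_c$ of Definition~\ref{def_canonical}: since the right-hand side of~\eqref{aux_system} is positively homogeneous of degree one in the velocities $\big(\frac{d\varphi_0}{ds},\frac{d\varphi_1}{ds},\dots,\frac{d\varphi_N}{ds}\big)$, the derivatives $\frac{d\tau_l}{ds},\frac{dy_l}{ds},\frac{d\beta_l}{ds}$ all vanish a.e.\ on the set where $\frac{d\delta_c}{ds}=\frac{d\varphi_0}{ds}+\sum_{k=1}^N\big\|\frac{d\varphi_k}{ds}\big\|=0$, so that $\varphi_0,\varphi,\tau,y,\beta$ are constant on every plateau of $\delta_c$. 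Hence $(\varphi_0,\varphi,\tau,y,\beta)\circ\delta_c^{-1}$ is well defined and, by the chain rule for absolutely continuous functions, coincides with the canonical process $(\varphi_0^c,\varphi^c,\tau^c,y^c,\beta^c)$, which again solves~\eqref{aux_system}, satisfies~\eqref{lip1}, and obeys $z_l=z_l^c\circ\delta_c$ for $l=1,\dots,N$, $z\in\{\varphi,\tau,y,\beta\}$; the gluing conditions are preserved because $\delta_c(0)=0$ and $\delta_c(S)=S^c$.

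Next I would transfer this identity to the segmented objects. The function $\tilde\delta_c$ defined in the statement is continuous (the block boundaries $s=lS$ match since $\delta_c(S)=S^c$), non-decreasing, and surjective onto $[0,\tilde S^c]=[0,NS^c]$; writing out~\eqref{exsol} for both the original and the canonical process and applying the previous step block by block gives
\[
(\tilde\tau,\tilde\varphi,\tilde y,\tilde\beta)=(\tilde\tau^c,\tilde\varphi^c,\tilde y^c,\tilde\beta^c)\circ\tilde\delta_c\qquad\text{on }[0,\tilde S],
\]
the only point to watch being that $\tilde\delta_c$ may attain a block boundary $lS^c$ already for some $s<lS$, in which case everything is locally constant and equality persists by the gluing conditions. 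In particular $\tilde\tau$ and $\tilde\tau^c$ are continuous, non-decreasing, and map onto $[0,T]$ (indeed $\tau_l(S)=lh$ for every $l$). Denoting by $\tilde\sigma,\tilde\sigma^c$ the right inverses of $\tilde\tau,\tilde\tau^c$, the relation $\tilde\tau=\tilde\tau^c\circ\tilde\delta_c$ together with the continuity and monotonicity of $\tilde\tau^c$ and $\tilde\delta_c$ yields, for $t\in]0,T[$, the equivalence $\tilde\tau(s)>t\iff\tilde\delta_c(s)>\tilde\sigma^c(t)$, hence $\tilde\sigma(t)=\max\{s:\tilde\delta_c(s)=\tilde\sigma^c(t)\}$ and, by continuity of $\tilde\delta_c$, $\tilde\delta_c(\tilde\sigma(t))=\tilde\sigma^c(t)$; the values at $t=0,T$ follow from the endpoint conventions for right inverses, so $\tilde\delta_c\circ\tilde\sigma=\tilde\sigma^c$ on $[0,T]$.

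Finally, combining Definition~\ref{gcsol} with the two displayed identities,
\[
\begin{aligned}
(x_{(\varphi_0,\varphi)},v_{(\varphi_0,\varphi)})(t)
&=(\tilde y,\tilde\beta)(\tilde\sigma(t))=(\tilde y^c,\tilde\beta^c)\big(\tilde\delta_c(\tilde\sigma(t))\big)\\
&=(\tilde y^c,\tilde\beta^c)(\tilde\sigma^c(t))=(x_{(\varphi_0^c,\varphi^c)},v_{(\varphi_0^c,\varphi^c)})(t)
\end{aligned}
\]
for every $t\in[0,T]$, which is at once the main assertion of the proposition and, through its two middle members, the ``in particular'' part. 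I expect the principal obstacle to be making rigorous the extension of rate-independence to the degenerate time change $\delta_c$ — that is, constancy of the $W^{1,1}$ solution on the plateaus of $\delta_c$ and the fact that the reparameterized sextuple still lies in $\U_\K$ and solves~\eqref{aux_system} — together with the plateau and endpoint bookkeeping in the right-inverse step; the remaining manipulations with~\eqref{exsol} are routine.
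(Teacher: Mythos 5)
Your argument is correct and follows exactly the route the paper intends: the paper presents Prop.~\ref{equivalent_graph} as a corollary of the rate-independence of \eqref{aux_system} (deferring details to \cite[Sect.~3]{MR95}), and your write-up supplies precisely that argument --- constancy of the $W^{1,1}$ solution on the plateaus of $\delta_c$ via positive homogeneity of the dynamics, the blockwise identity $(\tilde\tau,\tilde\varphi,\tilde y,\tilde\beta)=(\tilde\tau^c,\tilde\varphi^c,\tilde y^c,\tilde\beta^c)\circ\tilde\delta_c$, and the right-inverse identity $\tilde\delta_c\circ\tilde\sigma=\tilde\sigma^c$. The plateau and endpoint bookkeeping you flag is handled correctly, so no gaps remain.
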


%\begin{proof}  Let $(S,\varphi_0,\varphi, \tau,y,\beta)$ be the extended process associated with $(\varphi_0,\varphi)$ and let $(\tilde \tau,\tilde\varphi,\tilde\beta )$ and $\tilde S$ be as in \eqref{exsol}.  By the definition of canonical parametrization it follows that  $\tilde V(S)=h+V(h)$. Then, for any $\tau\in[0,h+V(h)]$ we set $\hat\varphi(\tau)=\varphi(\tilde V^{-1}(\tau))$. It is trivial that $\hat\varphi$ satisfies relation \eqref{lip1} and conditions (i)--(iii) in Definition \ref{graphcompl_def}. In particular, for every $t\in[(l-1)h,lh]$ let $s\in[0,S]$ be such that $\varphi_l(s)=u(t)$, so that $\hat\varphi_l(\tilde V(s)) =u(t)$ by the very definition of $\hat\varphi$. If $(\underline\sigma,\underline y)$ and $(\underline{\hat\sigma}, \underline{\hat y})$ are the solutions to control system \eqref{aux_system} corresponding to $\varphi$ and $\hat\varphi$, respectively, then one easily deduce that $(\underline\sigma,\underline y)(s)=(\underline{\hat\sigma}, \underline{\hat y})(\tilde V(s))$ for any $s\in[0,S]$. Accordingly, for all $t\in[(l-1)h,lh]$ and any $l=1,\dots,M$, one has
%\[
%x_\varphi(t) = \{y_l(s) \text{ : } \sigma_l(s)=t\} = \{ \hat y_l(\tilde V(s)) \text{ : } \hat\sigma_l(\tilde V(s))=t \} = x_{\hat\varphi}(t).
%\]
%\end{proof}

\begin{remark}\label{Rconv} The introduction of all Lipschitz continuous extended controls, not only the canonical ones, plays a key role with regards to minimization problems over extended processes (and, consequently, over impulse processes, in view of the equivalence result in Thm. \ref{equivalence_th} below). Indeed, on the one hand,  considering all extended controls with a Lipschitz constant between 0 and 1   guarantees a convexity property that implies a compactness result for extended solutions, and hence  existence of a minimizer (see Props. \ref{robust_th}, \ref{P_existence}). In addition, it is helpful in deriving  a Maximum Principle, as  from Prop. \ref{equivalent_graph} it follows that: 1) a minimizer  can always be assumed canonical; 2) a canonical minimizer is minimal even among extended processes $(S,\varphi_0,\varphi, \tau,y,\beta)$ with its same end-time and satisfying e.g.
$\frac{1}{2}\le\frac{d\varphi_0}{ds}(s) + \sum_{l=1}^N \Big\|\frac{d\varphi_l}{ds}(s) \Big\|  \le\frac{3}{2}$   a.e.   
(so that we can replace a free end-time problem by a fixed end-time one). 
\end{remark}

\section{Equivalence between  Impulse and Extended Processes}\label{Smain}
We start  by introducing some notation.  Given  an  impulse control $(\mu,\nu,\{\w^r\}_{r\in[0,h]})$, for any $l=1,\dots, N$,  let $\mu_l\in C_\K^*([0,h])$ and  $\nu_l\in C^{\oplus}(0,h)$ be the Borel measures with distribution functions 
 \bel{nul}
 \begin{array}{l}
 \mu_l([0,r]):=\begin{cases} \ds \int_0^1\w_l^0(s)\,ds+\mu(](l-1)h,r+(l-1)h]) \quad\text{if $r\in[0,h[$,} \\
\ds \int_0^1\w_l^0(s)\,ds+\mu(](l-1)h,h[)+\int_0^1\w_l^h(s)\,ds \quad\text{if $r=h$,}
 \end{cases} \\
\nu_l([0,r]):=\begin{cases} \ds \int_0^1\|\w_l^0(s)\|\,ds+\nu(](l-1)h,r+(l-1)h]) \quad\text{if $r\in[0,h[$,} \\
\ds \int_0^1\|\w_l^0(s)\|\,ds+\nu(](l-1)h,h[)+\int_0^1\|\w_l^h(s)\|\,ds \quad\text{if $r=h$,}
 \end{cases}
 \end{array}
\eeq
respectively. Notice that   $\mu_l(\{r\})=\mu(\{r+(l-1)h\})$ and  $\nu_l(\{r\})=\nu(\{r+(l-1)h\})$ if $r\in]0,h[$, while $\mu_l(\{r\})=\int_0^1\w_l^r(s)\,ds$ and $\nu_l(\{r\})=\int_0^1\|\w_l^r(s)\|\,ds$ if $r\in\{0,h\}$,  so that  Def. \ref{imp_control} implies the relations%\footnote{By Def. \ref{imp_control},(i),    $\int_0^1\|w^r_l(s)\|\,ds=\|w^r_l(s)\|$ for a.e. $s\in[0,1]$, for all $r\in[0,h]$ and all $l=1,\dots,N$.}
\bel{mu_i_rel}
\begin{array}{l}
\int_0^1\w_l^r(s)\,ds=\mu_l(\{r\}), \quad \int_0^1\|\w_l^r(s)\|\,ds=\nu_l(\{r\}), \quad\text{for $r\in[0,h]$, } \\[1.5ex]
  \mu_l(\{h\})+\mu_{l+1}(\{0\})=\mu(\{lh\}), \   \nu_l(\{h\})+\nu_{l+1}(\{0\})=\nu(\{lh\}), 
  \end{array}
\eeq
for  $l=1,\dots,N$ and $l=1,\dots,N-1$, respectively. Define  $\phi:[0,h]\to[0,S]$,  as
\bel{phi}
\phi(0):=0, \quad \phi(r):= r + \sum_{l=1}^N \nu_l([0,r]) \quad\mbox{for   }r \in ]0,h ]\,, \quad S:= \phi(h).
\eeq
Let $\J\subset[0,h]$ denote the set  of  the discontinuity points of $\phi$, which is at most countable  and,  for any $r\in[0,h]$,   set  
$\Sigma^r:=[\phi^-(r), \phi^+(r)]$ (where $\phi^-(0)=0$,  $\phi^+(h)=\phi(h)$).
 Since the map $ \phi $ is right continuous on $]0,h]$, strictly increasing,  and $\phi(t_2)-\phi(t_1)\ge t_2-t_1$ for all $0\le t_1<t_2\le h$,  then its right inverse  is $1$-Lipschitz continuous,  increasing and  constant exactly on the  intervals $\Sigma^r$ with  $r\in\J$. 
 Let $l=1,\dots,N$.  Observe that   each $r\in [0,h]\setminus\J$ is not an atom for $\mu_l$ and $\nu_l$,  and $r+(l-1)h$ is not an atom for the original measures $\mu$ and $\nu$. 
Moreover, the Lebesgue measure $\mu_L$ and   the continuous components  $\mu_l^{c}$  and $\nu_l^{c}$ 
of  $\mu_l$ and $\nu_l$   
are absolutely continuous w.r.t. the measure $d\phi$. Let $m_0$, $m_l$ and $m_l^\nu$ be the Radon-Nicodym derivatives w.r.t. $d\phi$ of $\mu_L$,  $\mu_l^{c}$,  and $\nu_l^c$, respectively. Note that, assuming  $\nu^c=|\mu^c|$  in the definition of $\V_c(\mu)$, we have  $m_l^\nu=\|m_l\|$, by well-known properties of Radon-Nicodym derivatives. Hence,  $0\leq m_0(r), \   \|m_l(r)\| \leq 1$  $d\phi$-a.e,  $m_0(r)=0$ and  $m_l(r) =0$ for all $r\in\J$,  and
\begin{equation}
\label{normalize}
m_0(r) +\sum_{l=1}^N \,  \|m_l(r)\| =1 , \quad\mbox{$d\phi$-a.e. } r \in [0,h]\backslash \J.
\end{equation}
%In particular, this relation  has to be satisfied at the discontinuity points $r_j$ of $\phi$, since they are atoms for $d\phi$. 
%Notice that, for any $r\in\J$,   $0=\mu_L(\{r\})=m_0(r)[\phi^+(r)-\phi^+(r)]$ and  $0= |\mu_l^{c}| (\{r\})=  \|m_l(r)\|[\phi^+(r )-\phi^+(r )]$, so that 
Finally, for every $r\in\J$  define
   \bel{gamma_rho}
 \gamma^r(s):=\frac{s-\phi^-(r)}{\phi^+(r)-\phi^-(r)},\quad \text{for any } s \in \Sigma^r.
 \eeq 

\begin{theorem}\label{equivalence_th}
{\rm (i)} Take  an  impulse process $(\mu,\nu,\{\w^r\}_{r\in[0,h]},x,v)$. Let  $\phi$, $S$, $\J$,   $\Sigma^r$ and the $m_l$'s be as above and define  $(S,  \varphi_0, \varphi, \tau, y, \beta)$ as follows. 
 The function $\varphi_0:[0,S]\to[0,h]$ is the right  inverse of $\phi$. Let  $l=1,\dots,N$.   The map   $\varphi_l:[0,S]\to{\mathbb R}^m$  solves
\bel{phi_l}
\frac{d\varphi_l}{ds}(s)=
\begin{cases}
m_l(\varphi_0(s)) \qquad & \text{if $s\in [0,S]\setminus \bigcup_{r\in\J} \Sigma^r$}\\
\frac{1}{\phi^+(r)-\phi^-(r) }\,\w^r_l (\gamma^r(s)) \qquad &\text{if $s\in\Sigma^r$ for some $r\in\J$,}\\
%\frac{1}{\psi^+(0)-\psi (0) }\,\w[0]_l (\gamma^0(s)) \qquad &\text{if $0\in\J$ and $s\in\Sigma^0$,}\\
%\frac{1}{\psi(h)-\psi^-(h) }\,\w[h]_l (\gamma^0(s)) \qquad &\text{if $h\in\J$ and $s\in\Sigma^h$,}
\end{cases}
\eeq
%where for $\rho\in\J$ and $s\in\Sigma^\rho$,    $\gamma^\rho(s):=\frac{s-\psi^-(\rho)}{\psi^+(\rho)-\psi^-(\rho))}$. 
 with initial condition $\varphi_{l }(0)=0$ if $l=1$, $\varphi_{l }(0)=\varphi_{l-1}(S)$ for  $l=2,\dots, N$;  $\tau_l(s):=(l-1)h+\varphi_0(s)$ for any $s\in[0,S]$. We set $y_{-a}(s):= \xi_0(\tau_1(s)-(a+1)h)$ for $s\in[0,S]$ and $a=0,\dots,M-1$, and define
\bel{cond_finy}
(y_l,\beta_l)(0):= \begin{cases} (\xb,0) &\quad\text{if $l=1$,}\\
(\zeta^h_{l-1},\theta^h_{l-1}) (1) &\quad\text{if $l>1$,}
\end{cases} \quad (y_l,\beta_l)(S):= (\zeta^h_l , \theta^h_l)(1),
\eeq
and
\bel{equiv_tr2}
 (y_l,\beta_l)(s)=\left\{\begin{array}{l}
 (x,v)\circ \tau_l(s), \qquad s\in]0, S[\setminus \cup_{r\in\J}  \Sigma^r , \\[1.0ex]
\ds \big(\zeta^r_l\big(\gamma^r(s)\big), \theta^r_l\big(\gamma^r(s)\big)\big) \ \  s\in ]0, S[\cap\Sigma^r, \ \   r\in\J,
  \end{array}\right.
\eeq
for   $\zeta^r_l$,  $\theta^r_l$  as in \eqref{jump_x1}, \eqref{jump_v1}, respectively.
Then, the element  $(S,  \varphi_0, \varphi, \tau, y, \beta)$ is a canonical extended process and %(namely, $\sigma(t)\in\tilde\tau^{\leftarrow}(\{t\})$ for every $t\in[0,T]$), 
it satisfies  the relation
\bel{equiv_tr}
 ( x,v) (t)=(\tilde y,\tilde\beta)\circ\tilde\sigma(t), \quad t\in[0,T], 
 \eeq
 in which $\tilde\sigma$ is the right inverse of $\tilde\tau$, $\tilde\tau$  as in \eqref{exsol}. In other words, $(x,v)$ coincides with the g.c. solution associated with $(S,  \varphi_0, \varphi, \tau, y, \beta)$.

\vsm
\noindent
{\rm (ii)} Conversely, take a canonical extended process  $(S,  \varphi_0, \varphi, \tau, y, \beta)$ and 
define  $(\mu,\nu,\{\w^r\}_{r\in[0,h]},x,v)$ as follows. Let $\sigma$ be the right inverse of $\varphi_0$ and let  $\tilde\sigma:[0,T]\to [0,\tilde S]$ be  the right inverse of $\tilde\tau$,  $(\tilde S,\tilde\tau,\tilde\varphi,\tilde y,\tilde\beta)$  as in \eqref{exsol}.
The  measures $\mu\in C_\K^*([0,T])$,   $\nu\in C^{\oplus}(0,T)$
are defined via their distribution functions, by 
\bel{u_v}
\ds \mu([0,t]): =  \int_{0}^{\tilde\sigma^+(t)} \frac{d\tilde\varphi}{ds}(s) ds, \quad  \nu([0,t]):=\int_{0}^{\tilde\sigma^+(t)}\left\| \frac{d\tilde\varphi}{ds}(s) \right\|ds,  \quad \text{for $t\in[0,T]$.} 
\eeq
 % Let $\sigma$ be the right inverse of $\varphi_0$, let $\tilde\sigma$ denote the right inverse of $\tilde\tau$, and
%\footnote{By the definition of $\sigma$ it follows that $\sigma(\rho+(l-1)h)=\sigma(\rho)+(l-1)S$ for every $\rho\in]0,h[$; $\sigma((l-1)h)=(l-1)S+\sigma(0)$ and $\sigma(Mh)=MS$. Therefore, $(\sigma(\rho)-\sigma^-(\rho))=(\sigma(\rho+(l-1)h)-\sigma^-(\rho+(l-1)h))$, $\sigma(0)=\sigma((l-1)h)-(l-1)S)$, and $lS-\sigma^-(lh)=S-\sigma^-(h)$ (see the picture).}
For any $r\in[0,h]$  and  $l=1,\dots,N$,  set
\[
\w^r_l (s) :=(\sigma^+(r)-\sigma^-(r)) \frac{d\varphi_l}{ds} \Big((\sigma^+(r)-\sigma^-(r))\,s + \sigma^-(r)\Big), \quad\text{a.e.   $s\in[0,1]$.}
\]
Finally, 
%\gf{let $x(t)=y_{-a}(\sigma(t+(a+1)h))=\xi_0(t)$ for $t\in[-(a+1)h,-ah]$ and $a=0,\dots,M-1$, while for any $t\geq0$} 
let $(x,v)$ be  defined as in \eqref{equiv_tr}, namely  the g.c. solution associated with $(S,  \varphi_0, \varphi, \tau, y, \beta)$.
Then, $(\mu,\nu,\{\w^r\}_{r\in[0,h]},x,v)$ is an impulse process. Moreover,  the  canonical extended process corresponding to such $(\mu,\nu,\{\w^r\}_{r\in[0,h]},x,v)$ constructed as in statement {\rm(i)} is precisely $(S,  \varphi_0, \varphi, \tau, y, \beta)$.
\end{theorem}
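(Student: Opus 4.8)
\textbf{Proof plan for Theorem \ref{equivalence_th}.}

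The plan is to prove the two directions essentially by ``unwinding the definitions'' in opposite order, with the bulk of the work going into checking that the constructed objects genuinely satisfy the defining constraints (Def.~\ref{imp_control}, Def.~\ref{extended processes}, the canonical form \eqref{lip1}). For part (i), I would first verify that $(S,\varphi_0,\varphi)$ is a well-defined extended control: that $\varphi_0$, being the right inverse of the strictly increasing right-continuous $\phi$, is $1$-Lipschitz, increasing, and maps $[0,S]$ onto $[0,h]$ with $\varphi_0(0)=0$, $\varphi_0(S)=h$ (this is exactly the right-inverse fact quoted just before the theorem); that each $\varphi_l$ defined by \eqref{phi_l} is Lipschitz with $\tfrac{d\varphi_l}{ds}\in\K$ a.e.\ --- on the complement of $\bigcup_r\Sigma^r$ this is because $m_l(\varphi_0(s))\in\K$ (range of $\mu_l$, hence its Radon--Nikodym density w.r.t.\ $d\phi$ after the normalization, lies in the cone $\K$), and on each $\Sigma^r$ it is because $\w^r_l(\gamma^r(s))\in\K$ by the definition of an impulse control; and that the concatenation condition (ii) of Def.~\ref{extended controls} holds by the choice of initial data $\varphi_l(0)=\varphi_{l-1}(S)$. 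The canonical identity \eqref{lip1} then follows on $[0,S]\setminus\bigcup_r\Sigma^r$ from \eqref{normalize} (with $\tfrac{d\varphi_0}{ds}=m_0(\varphi_0(s))$ a.e.\ there, since $\varphi_0$ is locally affine with slope $1/\phi'$ and $m_0$ is the density of Lebesgue measure w.r.t.\ $d\phi$) and, on each $\Sigma^r$, from $\tfrac{d\varphi_0}{ds}=0$ there together with condition (i) of Def.~\ref{imp_control}, which says $\sum_l\|\w^r_l(s)\|$ is constant and equals $\sum_l\nu_l(\{r\})=\phi^+(r)-\phi^-(r)$ a.e.; dividing by $\phi^+(r)-\phi^-(r)$ gives $\sum_l\|\tfrac{d\varphi_l}{ds}\|=1$.

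Still in part (i), the second task is to check that $(\tau,y,\beta)$ defined by $\tau_l(s)=(l-1)h+\varphi_0(s)$, \eqref{cond_finy}, \eqref{equiv_tr2} actually solves \eqref{aux_system}. Here I would argue interval by interval: on a maximal interval where $\varphi_0$ is strictly increasing, \eqref{equiv_tr2} reads $(y_l,\beta_l)=(x,v)\circ\tau_l$, so $\tfrac{d}{ds}(y_l,\beta_l)$ can be computed by the chain rule from the defining integral equations for the impulse solution $(x,v)$ restricted to intervals of continuity, where $dx = f\,dt + \sum_j g_j\,d(\mu^c)^j$ and $d(\mu^c)^j$ pulls back to $m_l^j\,d\phi = \tfrac{d\varphi_l^j}{ds}\,ds$ --- this is exactly the ACS right-hand side; on each $\Sigma^r$, \eqref{equiv_tr2} reads $(y_l,\beta_l)=(\zeta^r_l,\theta^r_l)\circ\gamma^r$, and differentiating \eqref{jump_x1}--\eqref{jump_v1} with the chain rule and $\tfrac{d\gamma^r}{ds}=1/(\phi^+(r)-\phi^-(r))$ again reproduces the ACS right-hand side, now with the $f\tfrac{d\varphi_0}{ds}$ term vanishing because $\varphi_0$ is constant on $\Sigma^r$. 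One must also check that the boundary values match up at the endpoints of the $\Sigma^r$'s and at the junctions $s=lS$ in \eqref{exsol}: this is precisely the bookkeeping in \eqref{jump_x1} (the initial condition $\zeta^r_l(0)=x^-(r+(l-1)h)$ for $r\in\,]0,h]$ versus $\zeta^h_{l-1}(1)$ for $r=0$) matched against the left/right limits of $x$ at discontinuity points, using that $\mu$ and $\nu$ have their atoms at $\{r+(l-1)h: r\in\J\}$ by construction of $\phi$. Finally \eqref{equiv_tr} itself: $\tilde\sigma=\tilde\delta$-type composition of the $\varphi_0$-right-inverse, and $(\tilde y,\tilde\beta)\circ\tilde\sigma$ reconstructs $(x,v)$ on intervals of continuity because the time change is uniform there ($\tau_l(s)=(l-1)h+\varphi_0(s)$), and at a jump point $t=r+(l-1)h$ the preimage $\tilde\sigma([\,\cdot\,])$ is exactly the interval corresponding to $\Sigma^r$ (shifted), so the limits and the jump increments in the definition of the impulse solution are recovered from $(\zeta^r_l,\theta^r_l)(1)$; canonicity of the resulting extended process is just \eqref{lip1}, already established.

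For part (ii) the strategy is the mirror image: given a canonical $(S,\varphi_0,\varphi,\tau,y,\beta)$, set $\mu,\nu$ by \eqref{u_v} and the $\w^r_l$ by the stated affine reparametrization of $\tfrac{d\varphi_l}{ds}$ over the (possibly degenerate) interval $[\sigma^-(r),\sigma^+(r)]$ where $\varphi_0\equiv r$; then verify Def.~\ref{imp_control}(i)--(iii). Property (i) for $\w^r$ follows from canonicity \eqref{lip1} restricted to the interval $\{\varphi_0=r\}$, on which $\tfrac{d\varphi_0}{ds}=0$, giving $\sum_l\|\tfrac{d\varphi_l}{ds}\|=1$ a.e.\ there, hence $\sum_l\|\w^r_l(s)\|=\sigma^+(r)-\sigma^-(r)$ is (a.e.) constant; (ii) and (iii) are computed by evaluating the integrals $\int_0^1\w^r_l$, $\int_0^1\|\w^r_l\|$ against the atoms of $\mu,\nu$, which by \eqref{u_v} equal the jumps of $\tilde\varphi\circ\tilde\sigma$, i.e.\ the increments of $\varphi_l$ across $\{\varphi_0=r\}$; one must also check $\nu\in\V_c(\mu)$, i.e.\ produce the approximating sequence $(\mu_i,|\mu_i|)\weak(\mu,\nu)$ and verify $\nu^c=|\mu^c|$ --- the latter because, by canonicity and the absolute continuity structure, the density of $\nu^c$ w.r.t.\ $d\phi$ is $\|\,$(density of $\mu^c$)$\,\|$; the former by an explicit strict-sense approximation of the g.c.\ (smoothing the time change, as in the graph-completion literature, which I would cite to \cite{BR88,MR95,K06}). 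Then $(x,v)$ as in \eqref{equiv_tr} is by definition a g.c.\ solution, and the impulse-process identities for $(x,v)$ are read off from \eqref{equiv_tr2}--type formulas exactly as in (i). The last sentence --- that applying the construction of (i) to this impulse process returns the original $(S,\varphi_0,\varphi,\tau,y,\beta)$ --- is then a consistency check: the $\phi$ built from $(\mu,\nu,\{\w^r\})$ via \eqref{phi} agrees with $\delta_c$-normalized $\varphi_0^{-1}$ because canonicity forces the total-variation parametrization to coincide with $s$ itself, so all the intermediate objects match on the nose.

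I expect the main obstacle to be the careful handling of the measure-theoretic bookkeeping at the atoms: matching left/right limits of the BV impulse solution $(x,v)$ at each point $r+(l-1)h$ with $r\in\J$ against the endpoint values of the jump-arcs $(\zeta^r_l,\theta^r_l)$, in the presence of the subtlety flagged in the footnotes of Def.~\ref{imp_control} that $\nu$ may have atoms where $\mu$ does not (so $m_l^\nu=\|m_l\|$ but the \emph{atomic} parts need not satisfy an analogous norm identity componentwise). Keeping the distinction between $\mu_l,\nu_l$ on $[0,h]$ and the global $\mu,\nu$ on $[0,T]$ straight through the formulas \eqref{nul}--\eqref{mu_i_rel}, and making sure the degenerate case $\sigma^+(r)=\sigma^-(r)$ (no jump) is handled so that $\w^r_l\equiv 0$, will require care but no new ideas. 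Everything else is chain rule plus the rate-independence of \eqref{aux_system} and the right-inverse calculus already set up before the theorem.
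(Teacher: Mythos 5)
Your plan follows essentially the same route as the paper's proof: part (i) establishes canonicity from \eqref{normalize} and Def.~\ref{imp_control}(i) and then verifies the \eqref{aux_system} equations by pulling $d(\mu^c)^j$ back to $\frac{d\varphi_l^j}{ds}\,ds$ on and off the $\Sigma^r$'s, while part (ii) reads Def.~\ref{imp_control} off canonicity, builds the approximating sequence for $\nu\in\V_c(\mu)$ by smoothing the time change, and closes with the consistency check that the reconstructed $\phi$ has right inverse $\varphi_0$. The only cautions are that $\J$ may be dense, so the verification in (i) must go through the integral form at density points of $[0,S]\setminus\bigcup_{r\in\J}\Sigma^r$ rather than ``interval by interval'' (there need be no maximal interval on which $\varphi_0$ is strictly increasing), and that the final consistency check is where the paper spends a genuine argument (showing $\phi(\varphi_0(s))\ge s$ with equality off the jump set, then a contradiction argument), not merely an appeal to canonicity.
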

 
 From this theorem, proved in  Appendix \ref{proof_mainth}, it follows directly:
%As an immediate consequence of this theorem, whose proof is given in Appendix \ref{proof_mainth},  we have the following one-to-one correspondence. 
\begin{corollary} \label{invert_cor}
 Let  ${\mathcal I}$ be the map which associates with any impulse process \linebreak $(\mu,\nu,  \{\w^{r}\}_{ r \in [0,h]},x,v)$   the canonical extended process $(S,  \varphi_0, \varphi, \tau, y, \beta)$ as  in Thm. \ref{equivalence_th}, (i). Then, ${\mathcal I}$ is invertible and  %the impulse process 
$(\mu,\nu,  \{\w^{r}\}_{ r \in [0,h]},x,v)$ %={\mathcal I}^{-1}(S,  \varphi_0, \varphi, \tau, y, \beta)$ 
coincides with the impulse process associated with $(S,  \varphi_0, \varphi, \tau, y, \beta)$ as in  Thm. \ref{equivalence_th}, (ii). \end{corollary}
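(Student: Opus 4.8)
The plan is to read the two constructions of Theorem \ref{equivalence_th} as mutually inverse maps and to upgrade the single round-trip identity already contained in part (ii) into a genuine two-sided inverse. Write $\mathcal{I}$ for the assignment of part (i), sending an impulse process to its canonical extended process, and $\mathcal{J}$ for the assignment of part (ii), sending a canonical extended process to an impulse process. The final assertion of part (ii) of Theorem \ref{equivalence_th} is exactly the identity $\mathcal{I}\circ\mathcal{J}=\mathrm{id}$ on canonical extended processes, so $\mathcal{I}$ is surjective and $\mathcal{J}$ is injective; what the corollary adds is the reverse composition $\mathcal{J}\circ\mathcal{I}=\mathrm{id}$ on impulse processes, i.e. that the original impulse process is recovered from its canonical extended process. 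The key observation is that this reduces to a single property: \emph{it suffices to prove that $\mathcal{I}$ is injective}. Indeed, for any impulse process $P$ one has $\mathcal{I}\big(\mathcal{J}(\mathcal{I}(P))\big)=(\mathcal{I}\circ\mathcal{J})(\mathcal{I}(P))=\mathcal{I}(P)$ by part (ii), whence injectivity of $\mathcal{I}$ forces $\mathcal{J}(\mathcal{I}(P))=P$; combined with $\mathcal{I}\circ\mathcal{J}=\mathrm{id}$ this makes $\mathcal{I}$ a bijection with inverse $\mathcal{J}$, which is the full statement.

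To prove injectivity I would take two impulse processes $(\mu_a,\nu_a,\{\w^r_a\}_{r\in[0,h]},x_a,v_a)$, $a=1,2$, with the same image $(S,\varphi_0,\varphi,\tau,y,\beta)$ under $\mathcal{I}$, and show componentwise that they coincide. The trajectory components are immediate: by \eqref{equiv_tr} both $(x_a,v_a)$ equal the graph-completion solution of the common extended process, so $(x_1,v_1)=(x_2,v_2)$. The attached controls are almost as direct: on each jump interval $\Sigma^r$ with $r\in\J$, relation \eqref{phi_l} reads $\frac{d\varphi_l}{ds}=\frac{1}{\phi^+(r)-\phi^-(r)}\,\w^r_l\circ\gamma^r$, and since $\varphi_0$ (hence $\phi$, the lengths $\phi^+(r)-\phi^-(r)$, and the affine maps $\gamma^r$ of \eqref{gamma_rho}) and $\varphi_l$ are common to both processes, while $\gamma^r$ is a bijection of $\Sigma^r$ onto $[0,1]$, one reads off $\w^r_1=\w^r_2$ for $r\in\J$; for $r\notin\J$ both families vanish by condition (i) together with (ii.1)--(ii.2) of Definition \ref{imp_control}, since such an $r$ is then not an atom of any $\nu_l$.

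The measure components are where the argument concentrates. From the common $\varphi_0$ one recovers $\phi$ as its right inverse, hence the common jump set $\J$ and intervals $\Sigma^r$. Each slice measure $\mu_l$ is then pinned down by the common $\varphi_l$ through $\mu_l([0,r])=\int_0^{\phi^+(r)}\frac{d\varphi_l}{ds}(s)\,ds=\varphi_l(\phi^+(r))-\varphi_l(0)$, since off the $\Sigma^r$ the density $m_l\circ\varphi_0$ integrates to $\mu_l^c$ while the contribution of each $\Sigma^r$ equals $\mu_l(\{r\})=\int_0^1\w^r_l$; in particular $\mu_l(\{h\})$ and $\mu_{l+1}(\{0\})$ are separately determined by $\varphi_l$ and $\varphi_{l+1}$, so the grid-atom splitting in \eqref{mu_i_rel} is unambiguous. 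The atoms of $\nu_l$ are likewise common, being $\nu_l(\{r\})=\int_{\Sigma^r}\|\frac{d\varphi_l}{ds}\|\,ds$. The delicate point is the continuous part $\nu_l^c$, and here I would crucially invoke the defining constraint $\nu^c=|\mu^c|$ of $\V_c(\mu)$ in \eqref{Vmu}, which forces $m_l^\nu=\|m_l\|$, so that $\nu_l^c$ is determined by $m_l$ and hence by the common $\varphi_l,\varphi_0$. Once every $\mu_l,\nu_l$ is seen to be common, reassembling $\mu,\nu$ on $[0,T]$ via \eqref{nul}--\eqref{mu_i_rel} yields $\mu_1=\mu_2$ and $\nu_1=\nu_2$. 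The main obstacle is precisely this recovery of $\nu^c$: without the constraint $\nu^c=|\mu^c|$ the continuous part of $\nu$ would be independent data invisible to $\varphi$, and $\mathcal{I}$ would fail to be injective; it is exactly this constraint, flagged after Definition \ref{imp_control} as essential for well-posedness, that closes the argument.
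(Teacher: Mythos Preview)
Your proposal is correct. The paper does not give a separate proof of the corollary: it simply states that it ``follows directly'' from Theorem \ref{equivalence_th}. Your argument supplies what the paper leaves implicit, namely the passage from the one round-trip identity $\mathcal{I}\circ\mathcal{J}=\mathrm{id}$ (the last sentence of part (ii)) to full invertibility, via injectivity of $\mathcal{I}$.

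The approach is essentially the same as what the paper has in mind, only made explicit. Your componentwise recovery of the impulse data from $(S,\varphi_0,\varphi,\tau,y,\beta)$ is precisely the direct verification that the formulas of (i) and (ii) invert one another; the paper evidently regards this as routine inspection of \eqref{phi}, \eqref{phi_l}, \eqref{u_v} and the definition of the $\w^r_l$ in (ii), together with the fact that the right inverse $\sigma$ of $\varphi_0$ satisfies $\sigma^\pm(r)=\phi^\pm(r)$ (since $\phi$ is strictly increasing). Your observation that the constraint $\nu^c=|\mu^c|$ in the definition of $\V_c(\mu)$ is exactly what makes the continuous part of $\nu$ recoverable---and hence what makes $\mathcal{I}$ injective---is correct and is the same point the paper emphasizes after Definition \ref{imp_control}.
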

%\begin{proof}
% It is a straightforward consequence of \eqref{invert_formula}, which implies that the change of variables utilized in the proof of statements (i) and (ii) of Thm. \ref{equivalence_th} are the right inverse each other.  Arguments similar to those employed at the end of the proof of Thm. \ref{equivalence_th}, (ii) conclude the proof. 
%\end{proof}  
 
We can now prove the properties of impulse solutions stated in Section \ref{S_impulse}. 

\begin{proof}[of Prop. \ref{prop_uniq}] Given  an impulse control $(\mu, \nu, \{\w^r\}_{r \in [0,h]})$, let 
 $(S,\varphi_0,\varphi)$  be the corresponding  extended control as in Thm. \ref{equivalence_th},(i), with which a unique $W^{1,1}$  solution $(\tau, y,\beta)$ of  system of \eqref{aux_system} is associated,  by standard results on ODEs. Then, by Thm. \ref{equivalence_th},(ii), there exists at least one impulse solution $(x,v)$ corresponding to $(\mu, \nu, \{\w^r\}_{r \in [0,h]})$, which coincides with the g.c. solution associated with $(S,\varphi_0,\varphi, \tau, y,\beta)$.  Suppose by contradiction that   $(\hat x,\hat v)$ is another impulse solution associated with $(\mu, \nu, \{\w^r\}_{r \in [0,h]})$. But then, from Thm. \ref{equivalence_th}  it follows that both $(\hat x,\hat v)$ and $(x,v)$ are g.c. solutions corresponding to $(S,\varphi_0,\varphi,\tau,y,\beta)$ and satisfy \eqref{equiv_tr}, hence they coincide. \qed%Thus, $(\hat x,\hat v)(t)=(\tilde y,\tilde\beta)(\tilde\sigma(t))=(x,v)(t)$ for any $t\in [0,T]$ ($(\tilde\tau,\tilde y,\tilde\beta)$ as in \eqref{exsol} and $\tilde\sigma$ right inverse of $\tilde\tau$).
\end{proof}

\begin{proof}[of Prop. \ref{prop_density}] Take an impulse process  $(\mu,\nu,\{\w^r\}_{r\in[0,h]},x,v)$. Consider the associated canonical extended process $(S,  \varphi_0, \varphi, \tau, y, \beta)$ as defined in Thm.  \ref{equivalence_th},(i), so that \eqref{equiv_tr} holds. By \cite[Thm. 5.1]{AR15}  and \cite[Thm. 5.2]{MS218},   the right inverse $\sigma$ of $\varphi_0$ can be point-wisely approximated by  a sequence of   strictly increasing, absolutely continuous, onto  functions $\sigma_i:[0,h]\to[0,S]$  with  strictly increasing, 1-Lipschitzean inverse functions   $\varphi_{0_i}$  which converge uniformly to $\varphi_0$. For each $i\in\N$, let $(S,  \varphi_{0_i}, \varphi, \tau_i, y_i, \beta_i)$ be the  extended process associated with the extended control  $(S,  \varphi_{0_i}, \varphi)$, so that $(\tau_i,y_i,\beta_i)$ converges uniformly to $(\tau,y,\beta)$ by \cite[Thm. 1]{BR88}. Let $(\tilde\varphi, \tilde\tau_i,  \tilde y_i, \tilde \beta_i)$ be defined according to \eqref{exsol} and set
$(u_i,x_i,v_i):=(\tilde\varphi, \tilde y_i, \tilde \beta_i)\circ\tilde\sigma_i,
$ 
where $\tilde\sigma_i$ is the inverse of $\tilde\tau_i$. Then, by Thm.  \ref{equivalence_th},(ii), for each $i\in\N$, $(u_i,x_i,v_i)$ %\footnote{Equivalently written $(\mu_i,\nu_i,\{\w^r\}_{r\in[0,h]},x_i,v_i)$ with $(\mu_i,\nu_i)=(du_i,|du_i|)$ and  $\w^r\equiv 0$ for each $r\in[0,h]$.}
  is a strict sense process and it satisfies\footnote{In general, $(S,  \varphi_{0_i}, \varphi, \tau_i, y_i, \beta_i)$ is not canonical, but   the associated g.c. solution coincides with  the one corresponding to its canonical parameterization, by Prop. \ref{equivalent_graph}.}
$\underset{i\to+\infty}{\lim}u_i(t) =\int_0^{\tilde\sigma(t)}\frac{d\tilde\varphi}{ds}(s)ds=\mu([0,t])$ for $t\in[0,T]\setminus\{0,h,\dots,(N-1)h\}$, and  
$\underset{i\to+\infty}{\lim}(x_i,v_i)(t) =(x,v)(t)$ for $t\in[0,T]\setminus\{h,\dots,(N-1)h\}$.
Then,  $(du_i,|du_i|=dv_i) \weak  (\mu,\nu)$ and $dx_i \weak dx$ by  \cite[Prop. 5.2]{VP}. \qed
%The proof of the density result in  Prop.  \ref{prop_density} is thus complete.
\end{proof}

\begin{proof}[of Prop. \ref{robust_th}] 
Take a sequence $(\mu_i, \nu_i, \{\w_i^r\}_{ r \in [0,h]}, x_i, v_i)_{i\in\N}$ of impulse processes such that   $\sup_{i\in\N}v_i(T)\le C$ for     some $C>0$ (see Rem. \ref{R_tv}). For each $i\in\N$, let $(S_i,\varphi_{0_i},\varphi_i,\tau_i,y_i,\beta_i)$ be the  canonical extended process associated with  $(\mu_i, \nu_i, \{\w_i^r\}_{ r \in [0,h]}, x_i, v_i)$  as in Thm. \ref{equivalence_th},(i) and define $(\tilde\varphi_i, \tilde\tau_i,\tilde y_i,\tilde\beta_i)$ as in \eqref{exsol}.   Since 
$
S_i=\varphi_{0_i}(S_i)+%\sum_{l=1}^N(\beta_{l_i}(S_i)-\beta_{l_i}(0))=h+
\beta_{N_i}(S_i)=h+v_i(T)\le h+C,
$ along a subsequence, one has $S_i\to S$, for some $S >0$. Moreover, setting $(\w_{0_i},\w_i):=\left(\frac{d\varphi_{0_i}}{ds}, \frac{d\varphi_{_i}}{ds}\right)$,  $(\varphi_i,\tau_i,y_i,\beta_i)$ is a solution  on $[0,S_i]$ associated with  $(\w_{0_i},\w_i)$,
for   the space-time control system obtained from \eqref{aux_system}, when instead of $(S,\varphi_0,\varphi)$ we consider  as control  $(\w_0,\w):=\left(\frac{d\varphi_{0 }}{ds}, \frac{d\varphi}{ds}\right)$, taking values in the  compact set
$
W:=\left\{ (w_0,w)\in[0,+\infty[\times \K^N \text{ : } w_0+\sum_{l=1}^N \|w_l\|\leq 1 \right\},
$
and  add the equations 
\bel{eq_phi}
 \ds\frac{d\varphi}{ds}(s)= \w(s) \  \text{a.e. $s\in[0,S]$,} \ \ \varphi_1(0)=0, \ \ \varphi_l(0)=\varphi_{l-1}(S), \ l= 2,\dots,N.
\eeq 
 Suppose first that the cone $\K$  is convex. Note that the system under consideration is not control-affine, unless any equation for  the $\beta_l$ components that involves $\|\w_l\|$  is excluded.  In particular,  the convexity of the control set $W$ does not imply that of  the whole sets of velocities. Nevertheless, if we disregard the $\beta_l$'s components,   we can invoke the compactness of trajectories theorem \cite[Thm. 2.5.3]{OptV} to deduce that there exists a measurable control $(\bar \w_0,\bar \w)(s)\in W$ for a.e. $s\in[0,S]$,  such that, setting $\varphi_0(s):= \int_0^s\bar \w_0(s')\,ds'$ for any $s\in[0,S]$ and considering the solution component $(\varphi,\tau,y)$ of our system associated with $(\bar \w_0,\bar \w)$,   up to a subsequence,  we have \footnote{We mean that all trajectories $(\tau_i,  y_i,\varphi_i)$ and $(\tau,y,\varphi)$ are extended to $[0,h+C]$ in a continuous and constant manner. In particular, $d\varphi/ds \equiv 0$ on $[S, h+C]$ and $d\varphi_i/ds \equiv 0$ on $[S_i, h+C]$.  Moreover, notice that $y_{-a_i}$ converges uniformly to $y_{-a}$ because $\tau_{1_i}$ converges uniformly to $\tau_1$ and $\xi_0$ is continuous on a compact set, hence uniformly continuous.}  
\bel{convergenze}
(\tau_i,  y_i,\varphi_i) \overset{L^\infty}{\to} (\tau,  y,\varphi), \  \ \left(\frac{d\tau_i}{ds}, \frac{d y_i}{ds}, \frac{d\varphi_i}{ds}\right) \overset{\text{weakly in $L^1$}
}{\to} \left(\frac{d\tau}{ds}, \frac{d y}{ds}, \frac{d\varphi}{ds}\right).
\eeq
In particular, $\varphi_0(S)=\tau_1(S)=\lim_i \tau_{1_i}(S_i)=h$ and $(S,\varphi_0,\varphi,\tau,y,\beta)$ is a (possibly, non canonical)  extended process,  when  $\beta$ is  the solution to
\[
\frac{d\beta}{ds}(s) = \left\| \frac{d\varphi}{ds}(s) \right\| \  \text{a.e. $s\in[0,S]$,} \ \ \beta_1(0)=0, \ \  \beta_l(0)=\beta_{l-1}(S) \ \text{for $l=2,\dots,N$.}
%\end{cases}
\]
In general, $\beta$  does not coincide with the uniform limit of the $\beta_i$'s. However, 
 \eqref{convergenze} implies that $\|d\varphi_l^j/ds \|_{L^1} \leq \underset{i}{{\rm lim\,inf}} \| d\varphi^j_{l_i}/ds \|_{L^1}$ for all $l=1,\dots,N$,  $j=1,\dots,m$, %\footnote{\mmN{Again, we consider $\varphi$ and the $\varphi_i$'s extended to $[0,h+C]$ in a continuous constant manner.}}  
so that  
$$
\begin{array}{l}
\beta_N(S)= \sum_{l=1}^N[\beta_l(S)-\beta_l(0)]  
%=\sum_{l=1}^N \left(\int_0^{h+C} \left\| \frac{d\varphi_l}{ds}(s) \right\| ds\right)
% = \sum_{l=1}^N \left( \sum_{j=1}^m \left\| \frac{d\varphi^j_l}{ds}\right\|_{L^1} \right)\\
%& \leq  \sum_{l=1}^N\sum_{j=1}^m \left(\underset{i}{{\rm lim\,inf}}\left\| \frac{d\varphi^j_{l_i}}{ds}\right\|_{L^1(0,h+C)} \right) 
\leq \underset{i}{{\rm lim\,inf}} \sum_{l=1}^N\left(\sum_{j=1}^m \left\| \frac{d\varphi^j_{l_i}}{ds}\right\|_{L^1} \right) \\
%& = \underset{i}{{\rm lim\,inf}}  \sum_{l=1}^N \left(\int_0^{h+C} \left\| \frac{d\varphi_{l_i}}{ds}(s) \right\| ds\right) =
\qquad\quad  = \underset{i}{{\rm lim\,inf}} \,\beta_{N_i}(S_i) = \underset{i}{{\rm lim\,inf}} \, v_i(T) \leq C.
  \end{array}
$$
Let $(x,v)$ be the g.c. solution associated with $(\varphi_{0},\varphi)$.  By  Thm. \ref{equivalence_th}, for each  $i$,    $(x_i,v_i)$ is the  g.c. solution corresponding to $(\varphi_{0_i},\varphi_i)$. Let $(\tilde\varphi ,\tilde\tau,\tilde y,\tilde\beta)$ be as in \eqref{exsol}. Let $\tilde\sigma$, $\tilde\sigma_i$ be the right inverses   of $\tilde\tau$, $\tilde\tau_i$, respectively. Define the countable set 
$
E:=\{t\in[0,T]: \ t \ \text{discontinuity point of $\tilde\sigma$ or  $\tilde\sigma_i$ for some $i\in\N$}\}.
$
We claim that
\bel{conv_sigma} 
\lim_{i\to+\infty}\tilde\sigma_i(t)=\tilde\sigma(t) \qquad \text{for any $t\in[0,T]\setminus E$.}
\eeq
Indeed, take $t\in[0,T]\setminus E$, set $s:=\tilde\sigma(t)$ and $s_i:=\tilde\sigma_i(t)$ and assume in contradiction that 
%Indeed, setting $s:=\tilde\sigma(t)$ and $s_i:=\tilde\sigma_i(t)$, we have that $s_i\to s$ as $i\to+\infty$, as,  if  this is false, 
there is a subsequence, still denoted by $(s_i)$, such that $s_i\to s'\neq s$. Accordingly, we get $t=\tilde\tau_i(s_i)$ for any $i$, so that $t=\tilde\tau(s)=\lim_i \tilde\tau_i(s_i)=\tilde\tau(s')$, which implies that $\tilde\sigma$ is discontinuous at $t$, in contradiction with   the hypothesis  $t\notin E$.  Together with the uniform convergence of $\tilde y_i$ to $\tilde y$, this implies that 
\bel{convergenze2}
 \ds\lim_{i\to+\infty} x_i(t)=\lim_{i\to+\infty}\tilde y_i(\tilde\sigma_i(t))=\tilde y(\tilde\sigma(t))=  \tilde x(t)  \qquad \text{for all $t\in[0,T]\setminus E$.}
\eeq
Actually, this relation is always satisfied at $t\in\{0,T\}$, as $\tilde\sigma_i(0)=\tilde\sigma(0)=0$ for each $i$, while $\tilde\sigma_i(T)=NS_i\to NS=\tilde\sigma(T)$. Note that by Prop. \ref{equivalent_graph},(i), $(x,v)$ coincides with the g.c. solution associated with the canonical parameterization  $(S^c,\varphi_0^c,\varphi^c,\tau^c,y^c,\beta^c)$ of $(S,\varphi_0,\varphi,\tau,y,\beta)$, obtained by means of the increasing map $\delta_c$ defined as in Def. \ref{def_canonical}. At this point, Thm. \ref{equivalence_th},(ii)  guarantees that, corresponding to $(S^c,\varphi_0^c,\varphi^c,\tau^c,y^c,\beta^c)$, there exists an impulse control  $(\mu, \nu, \{\w^r\}_{r \in [0,h]})$  of which $(x,v)$ is the associated impulse solution.   

 It remains only to prove that $\mu_i\weak \mu$. Let $\sigma^c$ be the right inverse of $\tau^c$, so that by Def. \ref{def_canonical} one immediately has $\sigma^c =\delta_c\circ\sigma$. As a consequence, $\tilde\varphi^c\circ\tilde\sigma^c =\tilde\varphi\circ\tilde\sigma$, being $\tilde\varphi^c$ and $\tilde\sigma^c$ as in \eqref{exsol}. But by Thm. \ref{equivalence_th} one has that $\mu_i([0,t])=\tilde\varphi_i(\tilde\sigma_i(t))$ and $\mu([0,t])=\tilde\varphi^c(\tilde\sigma^c(t))$. The conclusion follows once again by \cite[Prop. 5.2]{VP} together with \eqref{convergenze} and \eqref{conv_sigma}.
%\mm{DELETE THIS PART IN THE FINAL VERSION. Here I prove that $\sigma^c(t)=\delta_c(\sigma(t))$ for any $t\in[0,T]$.
%\[
%\sigma^c(t)=\inf\{s\in[0,S^c] \text{ : } \tau^c(s)>t\} = \inf\{s\in[0,S^c] \text{ : } \tau(\delta_c^{-1})(s)>t\}\},
%\]
%while
%\[
%\sigma(t)=\inf\{s' \in[0,S] \text{ : } \tau(s')>t\},
%\]
%so that $\delta_c(\sigma(t))=\sigma^c(t)$. Indeed, since $\delta_c$ is increasing then $\delta_c^{-1}$ is strictly increasing and $\delta_c^{-1}(\delta_c(\sigma(t)))\geq \sigma(t)$, so that $\tau(\delta_c^{-1}(\delta_c(\sigma(t))))\geq \tau(\sigma(t)=t$. Therefore, if there exists $s\in[0,S^c]$ such that $\tau(\delta_c^{-1}(s))\geq t$ and $s<\delta_c(\sigma(t))$, then $\delta_c^{-1}(s)$ contradicts the very definiti
%on of $\sigma$, as $\delta_c^{-1}(s)<\sigma(t)$.% even in case $\delta_c^{-1}(\delta_c(\sigma(t))>\sigma(t)$.
%}

  Assume now   $\{(w,\|w\|): w\in\K\}$  is convex. In this case,  the sets of velocities of the control system is convex, including the $\beta$ component. Hence, the above arguments apply to the whole sequence of solutions $(\varphi_i,\tau_i,y_i,\beta_i)$, allowing us to prove  also that $v_i$ converges to $v$ on $E$. \qed
\end{proof}

\section{Optimal Control Problems}\label{S_optimal}
Consider the   {\em optimal impulse control problem}
\bel{impulse_optimal}\tag{P$_{\rm imp}$}
%\begin{cases}
\text{minimize } \ \ \Phi(x(T)),  
\ \ \text{ over $(\mu, \nu, \{\w^r\}_{ r \in [0,h]},x,v) \in \Gamma_{\rm imp}$},
%\end{cases}
\eeq
where $\Phi:{\mathbb R}^{n}\to{\mathbb R}$  is a lower semicontinuous, in short, l.s.c.,  function  and $\Gamma_{\rm imp}$ is the set of {\em feasible impulse processes}, defined as the  impulse processes for \eqref{DIS} satisfying the endpoint constraints
\bel{endpoint}
x(T)\in\T, \qquad
v(T) \leq C,
 \eeq
in which $C$ is a positive constant  and $\T\subset{\mathbb R}^{n}$ is a nonempty, closed set. 
%A feasible impulse process $(\bar\mu, \bar\nu, \{\bar \w^r\}_{ r \in [0,h]},\bar x,\bar v)$ is a {\em minimizer} for \eqref{impulse_optimal} if $\Phi(\bar x(T)) \leq \Phi(x(T))$ for any   $(\mu, \nu, \{\w^r\}_{ r \in [0,h]},x,v)\in\Gamma_{\rm imp}$.

\begin{proposition}\label{P_existence}
Assume $\K$ convex. If the optimization problem \eqref{impulse_optimal} has at least one feasible process, then it admits a minimizer. 
\end{proposition}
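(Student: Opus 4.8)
The plan is to run the direct method of the calculus of variations, using as the only non-elementary ingredient the compactness statement of Proposition \ref{robust_th}, which is available precisely because $\K$ is assumed convex, together with the $v(T)\le C$ part of the endpoint constraint \eqref{endpoint}. Set $m:=\inf\{\Phi(x(T)):(\mu,\nu,\{\w^r\}_{r\in[0,h]},x,v)\in\Gamma_{\rm imp}\}$. By the feasibility hypothesis $\Gamma_{\rm imp}\neq\emptyset$, so $m<+\infty$. I would first record that $m>-\infty$: for any feasible impulse process, Remark \ref{R_tv} gives $v(T)=\nu([0,T])\le C$, hence the continuous part satisfies $|\mu^c|([0,T])=\nu^c([0,T])\le C$ and the sum of the atomic masses of $\nu$ is $\le C$; since $f$ and the $g_j$'s are bounded, the jump increments in \eqref{jump_x1} obey $\|\zeta^r_l(1)-\zeta^r_l(0)\|\le(\max_j\|g_j\|_\infty)\int_0^1\|\w^r_l\|\,ds$, and summing the defining relations for $x$ one obtains a bound on the total variation of $x$ on $[0,T]$ depending only on $\|f\|_\infty$, $\max_j\|g_j\|_\infty$, $T$ and $C$. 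As $x(0)=\xb$ is fixed, all endpoints $x(T)$ of feasible processes lie in a fixed compact ball $B\subset\R^n$, and the l.s.c. function $\Phi$ is bounded below on $B$; thus $m>-\infty$ (alternatively, finiteness of $m$ also comes out a posteriori from the argument below together with the real-valuedness of $\Phi$).

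Next I would take a minimizing sequence $(\mu_i,\nu_i,\{\w_i^r\}_{r\in[0,h]},x_i,v_i)\in\Gamma_{\rm imp}$, i.e. with $\Phi(x_i(T))\to m$. By \eqref{endpoint}, $\sup_i v_i(T)\le C$, so since $\K$ is convex, Proposition \ref{robust_th} applies: along a subsequence (not relabelled) there exist an impulse process $(\mu,\nu,\{\w^r\}_{r\in[0,h]},x,v)$ and a full measure set $E\subseteq[0,T]$ with $0,T\in E$ such that $\mu_i\weak\mu$, $x_i(t)\to x(t)$ for all $t\in E$, and $v(T)\le C$. In particular, since $T\in E$, one has $x_i(T)\to x(T)$; as each $x_i(T)\in\T$ and $\T$ is closed, $x(T)\in\T$, and together with $v(T)\le C$ this shows that the limit impulse process lies in $\Gamma_{\rm imp}$, hence is a feasible process for \eqref{impulse_optimal}.

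Finally, by lower semicontinuity of $\Phi$,
$$
\Phi(x(T))\ \le\ \liminf_{i\to+\infty}\Phi(x_i(T))\ =\ m,
$$
and since $m$ is the infimum of $\Phi(x(T))$ over $\Gamma_{\rm imp}$ while the limit process is feasible, we conclude $\Phi(x(T))=m$; thus $(\mu,\nu,\{\w^r\}_{r\in[0,h]},x,v)$ is a minimizer. The whole argument is the standard direct method; the only point requiring genuine work is the compactness result Proposition \ref{robust_th} (proved earlier through the bijection with extended processes of Theorem \ref{equivalence_th} and the compactness-of-trajectories theorem applied to the space–time system derived from \eqref{DIS}), and it is exactly there that convexity of $\K$ is indispensable — without it the set of admissible velocities of the auxiliary system fails to be closed under weak $L^1$ limits, so no limiting feasible process is guaranteed. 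The remaining verification, the a priori bound on the total variation of $x$, is immediate from boundedness of $f$ and the $g_j$'s and the constraint $v(T)\le C$.
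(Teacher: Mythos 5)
Your proof is correct and follows essentially the same route as the paper: a minimizing sequence, the compactness result of Proposition \ref{robust_th} (enabled by convexity of $\K$ and the uniform bound $v_i(T)\le C$), closedness of $\T$ via $T\in E$, and lower semicontinuity of $\Phi$. The extra care you take to establish $m>-\infty$ is not in the paper's (very terse) proof but is a harmless and reasonable addition.
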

\begin{proof}
%In view of Prop. \ref{equivalent_graph}, it is sufficient to prove the statement in the case the optimization problem \eqref{extended_optimal} is deprived of the control constraint \eqref{lip1}.
%\[
%\left(\frac{d\varphi_0}{ds}(s),\left(\frac{d\varphi_1}{ds}(s), \dots, \frac{d\varphi_N}{ds}(s)\right)\right) \in \W \quad \text{for a.e. $s\in[0,S]$}.
%\] 
%In this situation there are convex sets of velocities and bounded endtime\footnote{In fact, given $u\in BV([0,Mh],{\mathbb R}^m)$ and a graph completion $\phi$ of $u$ defined on an interval $[0,S]$, there always exists a graph completion $\check \varphi$ of $u$ defined on an interval $[0,\check S]$ with $\check S= h+V(h)$, for $V$ as in \eqref{V}. It is sufficient to consider $\check\varphi(t')=\varphi(\check\psi(t'))$ for any $t' \in [0, h+V(h)]$, where $\check\psi(t') := \frac{S}{h+V(h)} t'$ for any $t' \in [0, h+V(h)]$.}, so that the 
%Existence of minimizers follows from standard subsequence extraction arguments.
 Take a minimizing sequence $(\mu_i, \nu_i, \{\w_i^r\}_{ r \in [0,h]}, x_i, v_i)$ of feasible impulse processes for problem  \eqref{impulse_optimal}. From Prop. \ref{robust_th} it follows that, up to a subsequence, there exists an impulse process $(\mu, \nu, \{\w^r\}_{ r \in [0,h]},x,v)$ for \eqref{DIS}  such that $x(T)=\underset{i\to+\infty}\lim x_i(T)\in\T$ and $v(T)\le C$. Hence, $(\mu, \nu, \{\w^r\}_{ r \in [0,h]},x,v)$ is feasible and actually a minimizer, as the function $\Phi$ is l.s.c..\qed
% 
% up to a subsequence, we can find an extended process $(S, \varphi_0,\varphi,\tau, y, \beta)$ (in general, not canonical) such that $S_i\to S$, $\beta_N(S)\leq C$ and for which \eqref{convergenze} holds. Accordingly, $(S, \varphi_0,\varphi,\tau, y, \beta)$ satisfies the endpoint constraint \eqref{endpoint_ext}, so that it is a feasible extended process.   As already observed, the canonical parameterization  $(S^c,\varphi_0^c,\varphi^c,\tau^c,y^c,\beta^c)$ of this  process  is still feasible and has the same cost, coinciding by definition with the infimum of the impulse problem \eqref{impulse_optimal}. Therefore, the impulse process $(\mu, \nu, \{\w^r\}_{ r \in [0,h]},x,v)$  corresponding to $(S^c,\varphi_0^c,\varphi^c,\tau^c,y^c,\beta^c)$, defined  as in Thm. \ref{equivalence_th},  is a minimizer for \eqref{impulse_optimal}.
 \end{proof}
\begin{remark} In view of  Prop. \ref{robust_th},   this existence result holds even if the final constraint $x(T)\in\T$  is replaced by  $(x,v)(T)\in\tilde\T$,  for any   $\tilde\T\subset{\mathbb R}^n\times[0,+\infty)$ closed, and   $\Phi(x)$ is replaced by a l.s.c. function $\tilde\Phi(x,v)$, increasing w.r.t.  the $v$-variable. Actually, again by Prop. \ref{robust_th},  we can also disregard the latter monotonicity condition when  the set  $\{(w,\|w\|): \  w\in \K\}$  is convex.
\end{remark}

In order to derive a Maximum Principle,  we associate with  problem \eqref{impulse_optimal}, the following, non impulsive,  {\em extended optimal control problem} 
\bel{extended_optimal}\tag{P$_{\rm ext}$}
%\begin{cases}
\text{minimize } \ \ \Phi(y_N(S)), 
\ \  \text{over $(S, \varphi_0,\varphi,\tau, y, \beta)\in\Gamma_{\rm ext}$,}
%\end{cases}
\eeq
where $\Gamma_{\rm ext}$ is  the set of {\em feasible extended processes}, defined as  the extended processes $(S, \varphi_0,\varphi,\tau, y, \beta)$ such that
% ($(\varphi_0,\varphi)(0)=(0,0)$, $\varphi_0(S)=h$, and)
\bel{endpoint_ext}
%S\leq C+h, SE UN PROCESSO E' CANONICO AUTOMATICAMENTE SI HA QUESTA CONDIZIONE DATO CHE $S=\int_0^S ds= \int_0^S [d\varphi_0/ds +\sum_l |d\varphi_l/ds|]ds=h+\beta_N(S).
y_N(S) \in \T,  \qquad \beta_N(S)\leq C.
\eeq
 Thm. \ref{equivalence_th} directly implies that \eqref{impulse_optimal} has a feasible process if and only if the same holds for \eqref{extended_optimal} and the infima of \eqref{impulse_optimal} and \eqref{extended_optimal} coincide.
 To highlight the crucial role of  Thm. \ref{equivalence_th} in determining necessary conditions of optimality  for  \eqref{impulse_optimal}, we first state a Maximum Principle for the extended optimal control problem.  For this purpose,  we introduce
the {\em (unmaximized) Hamiltonian} 
%function} $H$  as 
\[
\begin{split}
&H(\{t_k\},\{x_k\}, \{q_k^0\},\{q_k\}, d,w_0,w):= w_0  \sum_{l=1}^N q_l^0\\
& + \sum_{l=1}^N q_l\cdot \Big[F_l(\{t_k\},\{x_{k}\}) w_0 + \sum_{j=1}^m G_{j_l}(\{t_k\},\{x_{k}\}) w_l^j \Big] + d \sum_{l=1}^N \|w_l\|,
\end{split}
\]
and define the {\em drift Hamiltonian} $H^{\rm (dr)}$ and {\em impulse Hamiltonian} $H^{\rm (imp)}$, as 
\[
\begin{split}
&H^{\rm (dr)}(\{t_k\},\{x_k\}, \{q_k^0\},\{q_k\}):=  \sum_{l=1}^N q_l^0 +  \sum_{l=1}^N q_l\cdot F_l(\{t_k\},\{x_{k}\}),   \\
&H^{\rm (imp)}(\{t_k\},\{x_k\}, \{q_k^0\},\{q_k\}, d):= \max_{\underset{\|w\|=1 \ }{w\in\K^N,}} \sum_{l=1}^N q_l\cdot  \sum_{j=1}^m G_{j_l}(\{t_k\},\{x_{k}\}) w_l^j   +d,
\end{split}
\]
where   $F_l(\{t_k\},\{x_k\})=F_l(\{t_k\}_{k=l}^{M},\{x_k\}_{k=1}^l):=f(t_l,x_l,x_{l-1},\dots, x_1, \xi_0(t_M-Mh), \xi_0(t_{M-1}-Mh),\dots, \xi_0(t_l-Mh))$ for  $l=1,\dots, M$, and $F_l(\{t_k\},\{x_k\})=F_l(t_l,\{x_k\}_{k=l-M}^l):=f(t_l,x_l,x_{l-1},\dots, x_{l-M})$  for $l=M+1,\dots,N$. The maps $G_{j_l}$ are defined analogously, with $g_j$ in place of $f$.\footnote{Note that in system \eqref{aux_system}, $y_0(s)=\xi_0(\tau_1(s)-h)=\xi_0(\tau_M(s)- Mh)$, $y_{-1}(s)=\xi_0(\tau_1(s)-2h)=\xi_0(\tau_{M-1}(s)- Mh)$, \dots,  $y_{-(M-1)}(s)=\xi_0(\tau_1(s)-Mh)$, as $\tau_l(s)=\tau_1(s)+(l-1)h$.}
Finally, set
$$
\W:=\Big\{(w_0,w)\in[0,+\infty[\times\K^N \text{ : } w_0+\sum_{l=1}^N \|w_l\|=1\Big\}.
$$
In the following, given a reference extended process $(\bar S, \bar\varphi_0,\bar\varphi,\bar\tau,\bar y, \bar\beta)$ and a function $z$ defined on  ${\mathbb R}\times{\mathbb R}^{(M+1)n}$,   for any $s\in[0,\bar S]$, $i=1,\dots,N$,  we set $\hat z^i(s):=z(\bar\tau_i(s),\{\bar y_{i-k}(s)\}_{k=0}^M)$, and write $\frac{\partial\hat z^i}{\partial t}(s)$, and $\nabla_{x_k}\hat z^i(s)$  for  the partial derivative of $z^i$ w.r.t. $t$, and the Jacobian of $z^i$ w.r.t. the $k$-th delay state variable ($k=1,\dots,M+1$)  evaluated at  $(\bar\tau_i(s),\{\bar y_{i-k}(s)\}_{k=0}^M)$, respectively.

%, which we then use to derive necessary conditions of optimality for the impulse optimal control problem, thanks to Thm. \ref{equivalence_th}. 

\begin{theorem} \label{pmp_extended}
Assume that $\xi_0$ and $\Phi$ are $C^1$ functions and let $(\bar S, \bar\varphi_0,\bar\varphi,\bar\tau,\bar y, \bar\beta)$ be a {\em minimizer} for \eqref{extended_optimal}. Then there exist numbers $\lambda\geq0$ and $d\leq 0$ ($d=0$ if $\bar\beta_M(\bar S)<C$), a subset $\NN\subset[0,\bar S]$ of null Lebesgue measure, and absolutely continuous functions $(q^0_l,q_l):[0, \bar S]\to{\mathbb R}^{1+n}$ for $l=1,\dots,N$ satisfying the following relations:
\vsm
\noindent {\rm (A): }\ $\|(q_1,\dots,q_N)\|_{L^\infty}+\lambda\neq 0$;
\vsm
\noindent
{\rm (B): } for l=1,\dots,N and a.e. $s\in[0,\bar S]$,  for $\mathbb{I}_{l}^M:=1$ if $l\le M$ and $0$ otherwise,    
{\small \[
\begin{array}{l}
\ds-\frac{d q_l^0}{ds}(s) = q_l(s)\cdot \Big[\frac{\partial \hat f^l}{\partial t}(s) \frac{d\bar\varphi_0}{ds}(s)  + \sum_{j=1}^m  \frac{\partial \hat  g^l_{j}}{\partial t}(s)  \frac{d\bar\varphi_l^j}{ds}(s)  \Big]+ \mathbb{I}_{l}^M\sum_{i=1}^l q_{l-i+1}(s)\\
 \ds  \cdot\Big[ \nabla_{x_{M+2-i}} \hat f^{l-i+1}(s)\frac{d\bar\varphi_0}{ds}(s) + \sum_{j=1}^m \nabla_{x_{M+2-i}}\hat g^{l-i+1}_{j} (s) \frac{d\bar\varphi_{l-i+1}^j}{ds}(s)\Big] \frac{d\xi_0}{dt} (\bar\tau_l(s)-Mh), \\
%-\frac{d q_l^0}{ds}(s) = q_l(s) \cdot\Big[ \nabla_t \hat f^l(s) \frac{d\bar\varphi_0}{ds}(s) + \sum_{j=1}^m \nabla_t \hat g^l_j(s) \frac{d\bar\varphi_l^j}{ds}(s) \Big] \quad \text{if $l=2,\dots,N$,} \\
\ds-\frac{d q_l}{ds}(s) = \sum_{i=l}^{(l+M)\land N} q_{i}(s) \cdot\Big[\nabla_{x_{i-l+1}} \hat f^i(s) \frac{d\bar\varphi_0}{ds}(s) + \sum_{j=1}^m \nabla_{x_{i-l+1}}\hat g^l_{j}(s) \frac{d\bar\varphi_i^j}{ds}(s) \Big].
\end{array}
\]}
%{\small\[
%\begin{cases}
%\ds-\frac{d q_l^0}{ds}(s) = q_l(s) \cdot\Big[\hat f^l_t(s) \frac{d\bar\varphi_0}{ds}(s) + \sum_{j=1}^m  \hat g^l_{j_t}(s) \frac{d\bar\varphi_l^j}{ds}(s) \Big],   \\
%%-\frac{d q_l^0}{ds}(s) = q_l(s) \cdot\Big[ \nabla_t \hat f^l(s) \frac{d\bar\varphi_0}{ds}(s) + \sum_{j=1}^m \nabla_t \hat g^l_j(s) \frac{d\bar\varphi_l^j}{ds}(s) \Big] \quad \text{if $l=2,\dots,N$,} \\
%\ds-\frac{d q_l}{ds}(s) = \sum_{i=l}^{N} q_{i}(s) \cdot\Big[   \hat f^i_{x_{i-l+1}}(s) \frac{d\bar\varphi_0}{ds}(s) + \sum_{j=1}^m \hat g^l_{j_{x_{i-l+1}}}(s) \frac{d\bar\varphi_i^j}{ds}(s) \Big], \quad l=M+1,\dots,N.
%\end{cases}
%\]}
%\[
%\begin{cases}
%-\frac{d q_1^0}{ds}(s) = q_1(s) \cdot\Big[ \nabla_t \hat f^1(s) \frac{d\bar\varphi_0}{ds}(s) + \sum_{j=1}^m \nabla_t \hat g^1_j(s) \frac{d\bar\varphi_1^j}{ds}(s) \Big] \\
%\quad+ \sum_{i=1}^M q_i(s) \cdot \Big[\nabla_{x_{i+1}} \hat f^i(s)\frac{d\bar\varphi_0}{ds}(s) + \sum_{j=1}^m \hat g_j^i(s) \frac{d\bar\varphi_i^j}{ds}(s)\Big] \cdot \frac{d}{dt} \xi_0(\bar\tau_1(s)-ih), \\
%-\frac{d q_l^0}{ds}(s) = q_l(s) \cdot\Big[ \nabla_t \hat f^l(s) \frac{d\bar\varphi_0}{ds}(s) + \sum_{j=1}^m \nabla_t \hat g^l_j(s) \frac{d\bar\varphi_l^j}{ds}(s) \Big] \quad \text{if $l=2,\dots,N$,} \\
%-\frac{d q_l}{ds}(s) = \sum_{i=l}^{l+M} q_{i}(s) \cdot\Big[ \nabla_{x_{i-l+1}} \hat f^l(s) \frac{d\bar\varphi_0}{ds}(s) + \sum_{j=1}^m \nabla_{x_{i-l+1}} \hat g^l_j(s) \frac{d\bar\varphi_i^j}{ds}(s) \Big]
%\end{cases}
%\]
% where $q_{N+1}=\dots=q_{N+M}\equiv 0$.
%\[
%\begin{split}
 {\rm (C): }\ \,  $-q_N(\bar S)\in\lambda\nabla\Phi(\bar y_N(\bar S)) + N_{\T}(\bar y_N(\bar S))$ and, for any $l=2,\dots,N$, 
 $(q_l^0(0),q_l(0))=(q_{l-1}^0 (\bar S), q_{l-1}(\bar S))$.
%\end{split}
%\]
%\vsm
%\noindent
%{\rm (D):}
%  it holds
\bel{zero1}
\begin{split} 
{\rm (D):} \ 0 &=\max_{(w_0,w)\in \W} H(\{\bar\tau_l(s)\},\{\bar y_l(s)\}, \{q_l^0(s)\},\{q_l(s)\}, d,w_0,w)  \qquad\qquad \ \ \ \ \\
&=\max\Big\{ H^{\rm (dr)}(\{\bar\tau_l(s)\},\{\bar y_l(s)\}, \{q_l^0(s)\},\{q_l(s)\}), \\
& \qquad H^{\rm (imp)}(\{\bar\tau_l(s)\},\{\bar y_l(s)\}, \{q_l^0(s)\},\{q_l(s)\}, d) \Big\}, \ s\in[0,\bar S].
\end{split}
\eeq
In particular, for all $l=1\dots,N$ and any $s\in[0,\bar S]$ we have
\bel{zero2}
\max_{w\in\K, \, \|w\|=1} q_l(s)\cdot \sum_{j=1}^m \hat g^l_j(s) w^j+d \leq 0.
\eeq
%
%
%\vsm
%\noindent
%{\rm (E):} for all $s\in[0,\bar S]\setminus\NN$ it holds
\bel{equal0} 
\begin{array}{l}
{\rm (E):} \quad H\Big(\{\bar\tau_l(s)\},\{\bar y_l(s)\}, \{q_l^0(s)\},\{q_l(s)\}, d,\frac{d\bar\varphi_0}{ds}(s) ,\frac{d\bar\varphi}{ds}(s)\Big)  \\
\ds \quad= \max_{(w_0,w)\in \W} H(\{\bar\tau_l(s)\},\{\bar y_l(s)\}, \{q_l^0(s)\},\{q_l(s)\}, d,w_0,w), \ \ s\in[0,\bar S]\setminus\NN.
\end{array}
\eeq
 In particular, for all $s\in[0,\bar S]\setminus\NN$ we have
\bel{zero3}
\Big\|\frac{d\bar\varphi_l}{ds}(s) \Big\|>0 \implies \max_{w\in\K, \, \|w\|=1} q_l(s)\cdot \sum_{j=1}^m \hat g^l_j(s) w^j+d = 0.
\eeq
%
%
%
%\vsm
%\noindent
%{\rm (F):} for all $s\in[0,\bar S]\setminus\NN$ it holds
\bel{hamdrift}
{\rm (F):} \quad\frac{d\bar\varphi_0}{ds}(s) \Big[\sum_{l=1}^N q_l^0(s) +  \sum_{l=1}^N q_l(s) \cdot \hat f^l(s) \Big] =0, \quad s\in[0,\bar S]\setminus\NN,\qquad
\eeq
\bel{hamimp}
\sum_{l=1}^N q_l(s) \cdot \Big( \sum_{j=1}^m \hat g^l_j(s) \frac{d\bar\varphi_l^j}{ds}(s) \Big) + d \sum_{l=1}^N \Big\|\frac{d\bar\varphi_l}{ds}(s) \Big\|=0,  \quad s\in[0,\bar S]\setminus\NN.
\eeq
In particular, for all $s\in[0,\bar S]\setminus\NN$ we have
\bel{implica}
H^{\rm (dr)}(\{\bar\tau_l(s)\},\{\bar y_l(s)\}, \{q_l^0(s)\},\{q_l(s)\})<0 \implies \frac{d\bar\varphi_0}{ds}(s)=0.
\eeq
Moreover, for all $s\in[0,\bar S]\setminus\NN$ s.t. $H^{\rm (dr)}(\{\bar\tau_l(s)\},\{\bar y_l(s)\}, \{q_l^0(s)\},\{q_l(s)\})<0$,  \bel{hamdrift2}
\begin{split}
&\sum_{l=1}^N q_l(s) \cdot \Big( \sum_{j=1}^m \hat g^l_j(s) \frac{d\bar\varphi_l^j}{ds}(s) \Big) + d \\%\Big\|\frac{d\bar\varphi}{ds}(s) \Big\| \\
&\qquad\qquad\qquad\qquad =  H^{\rm (imp)} (\{\bar\tau_l(s)\},\{\bar y_l(s)\}, \{q_l^0(s)\},\{q_l(s)\}, d) =0.
\end{split}
\eeq
Similarly, for all $s\in[0,\bar S]\setminus\NN$, $H^{\rm (imp)} (\{\bar\tau_l(s)\},\{\bar y_l(s)\}, \{q_l^0(s)\},\{q_l(s)\}, d)<0$ implies $\frac{d\bar\varphi}{ds}(s)=0$ and 
\bel{hamimp2}
\begin{split}
%&\sum_{l=1}^N q_l^0(s) +  \sum_{l=1}^N q_l(s) \cdot f(\bar\tau_l(s),\{\bar y_{l-k}(s)\}) \\
%&\qquad\qquad\qquad =  
H^{\rm (dr)} (\{\bar\tau_l(s)\},\{\bar y_l(s)\}, \{q_l^0(s)\},\{q_l(s)\}) =0.
\end{split}
\eeq
\end{theorem}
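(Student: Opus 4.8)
The plan is to deduce Theorem \ref{pmp_extended} from a (nonsmooth) Maximum Principle for conventional optimal control problems with state delays, applied to the auxiliary time-space system \eqref{aux_system}. First I would reformulate \eqref{extended_optimal} as a fixed-time problem: by Remark \ref{Rconv} and rate-independence, a minimizer may be taken canonical, hence (after the change of variable in Prop. \ref{equivalent_graph}) we may restrict attention to extended processes whose controls $(w_0,w)=(\tfrac{d\varphi_0}{ds},\tfrac{d\varphi}{ds})$ range in the compact set $\W$ (or, to keep an open neighbourhood of the constraint for the end-time argument, in the relaxed set $\tfrac12\le w_0+\sum_l\|w_l\|\le\tfrac32$), on a \emph{fixed} interval $[0,\bar S]$. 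The delays here are commensurate and constant (each $\tau_l(s)=(l-1)h+\varphi_0(s)$ only reshuffles the blocks $y_1,\dots,y_N$ via the junction conditions $y_l(0)=y_{l-1}(\bar S)$), so \eqref{aux_system} is genuinely a standard delay control system on $[0,\bar S]$ with state $(\tau_1,\dots,\tau_N,y_1,\dots,y_N,\beta_1,\dots,\beta_N)$, control $(w_0,w)\in\W$, the past-history term $y_{-a}(s)=\xi_0(\tau_1(s)-(a+1)h)$ entering as a known ($C^1$, since $\xi_0\in C^1$) inhomogeneity, Mayer cost $\Phi(y_N(\bar S))$, and endpoint constraints \eqref{endpoint_ext} plus the boundary/junction conditions of Def. \ref{extended processes}.

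Next I would invoke the Maximum Principle for delayed optimal control problems (e.g. \cite{VB,V1,V2}, or \cite{OptV}) for this reformulated problem. This produces a cost multiplier $\lambda\ge0$, a multiplier $d\le0$ for the inequality constraint $\beta_N(\bar S)\le C$ with complementary slackness ($d=0$ when $\bar\beta_N(\bar S)<C$), and absolutely continuous adjoint arcs $(q_l^0,q_l)$ conjugate to $(\tau_l,y_l)$ — the adjoint to the $\beta_l$ variables is the constant $-d$, which is why $d$ appears as the coefficient of $\sum_l\|w_l\|$ in the Hamiltonian $H$. The adjoint equations (B) are exactly the delay-adjoint (advanced) equations for \eqref{aux_system}: the contribution of $y_l$ to the dynamics of $y_l,\dots,y_{(l+M)\wedge N}$ gives the sum over $i$ in the $q_l$-equation, while differentiating the known term $\xi_0(\tau_l-Mh)$ produces the extra $\tfrac{d\xi_0}{dt}$ term in the $q_l^0$-equation (present only when $l\le M$, accounted for by $\mathbb I_l^M$). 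The junction conditions $(\tau_l,y_l,\beta_l)(0)=(\tau_{l-1},y_{l-1},\beta_{l-1})(\bar S)$ force the transversality-type matching $(q_l^0(0),q_l(0))=(q_{l-1}^0(\bar S),q_{l-1}(\bar S))$ in (C), and the free-endpoint condition at $s=\bar S$ on $y_N$ gives $-q_N(\bar S)\in\lambda\nabla\Phi(\bar y_N(\bar S))+N_\T(\bar y_N(\bar S))$; nontriviality (A) is the standard $\|(q_1,\dots,q_N)\|_{L^\infty}+\lambda\ne0$. The Weierstrass condition of the delayed MP is precisely (E)/\eqref{equal0}, and since the problem is autonomous in $s$ (no explicit $s$-dependence after the reformulation, the $s$-dependence being entirely through the state components $\tau_l$), the Hamiltonian is constant along the optimal arc and equals the value forced by the boundary data, namely $0$ — this yields (D)/\eqref{zero1}.

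The remaining assertions are algebraic consequences of (D) and (E) together with the block structure $H=H^{(0)}+H^{(\mathrm{dr})}\!\cdot w_0+H^{(\mathrm{imp-lin})}$, where I would exploit that $H$ is \emph{linear} in $(w_0,w)$ over the simplex-type set $\W$: the maximum of a linear functional over $\W$ is $\max\{$coefficient of the ``pure drift'' vertex, coefficient of each ``pure impulse'' direction$\}$, which is exactly $\max\{H^{(\mathrm{dr})},H^{(\mathrm{imp})}\}$, giving the second equality in \eqref{zero1}; that this common value is $0$ gives \eqref{zero2}. From (E), any component $w_0$ or $w_l$ that is nonzero on a positive-measure set must ``activate'' the corresponding face, i.e. the associated coefficient must attain the maximum $0$: this is \eqref{zero3}, \eqref{implica}, and the bracket identities \eqref{hamdrift}--\eqref{hamimp} (each is ``coefficient equals $0$ or multiplier equals $0$''), and \eqref{hamdrift2}, \eqref{hamimp2} follow by combining these with \eqref{zero1} (if the drift coefficient is strictly negative, the max $0$ must be achieved on the impulse face, and vice versa). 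I expect the main obstacle to be \emph{bookkeeping rather than depth}: carefully matching the index conventions of the cited delay Maximum Principle to the block-diagonal-with-junctions structure of \eqref{aux_system} — in particular getting the advanced-adjoint summation ranges $i=l,\dots,(l+M)\wedge N$ right, tracking which $\mathbb I_l^M$ terms survive, and verifying that the extra $\xi_0$-derivative term lands only in the $q_l^0$ equations for $l\le M$ — and then justifying that the reduction to a fixed end-time problem (via Remark \ref{Rconv}/Prop. \ref{equivalent_graph}) is legitimate without losing any of the necessary conditions, so that $d$ genuinely enters as a single scalar adjoint to all the $\beta_l$'s simultaneously.
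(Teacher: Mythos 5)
Your overall strategy coincides with the paper's: apply a Maximum Principle to the auxiliary problem \eqref{extended_optimal}, read off (B), (C), (E) from the adjoint/transversality/Weierstrass conditions, and obtain (D), (F) and the implications \eqref{zero2}--\eqref{hamimp2} by exploiting the positive homogeneity of $H$ in $(w_0,w)$ over the ``simplex'' $\W$ (the paper does this last part by short contradiction arguments rather than your vertex argument, but the content is the same). One framing error: \eqref{aux_system} is \emph{not} a delay control system in $s$ --- all the $y_{l-k}(s)$ appear at the same instant $s$, which is precisely what the method-of-steps reformulation buys --- so the paper invokes the standard free end-time Maximum Principle for conventional problems (\cite[Thm.~8.7.1]{OptV}), not a delayed one. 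Your ``advanced adjoint'' equations are nonetheless the correct transposed-Jacobian equations for the coupled block system, so this mislabelling does not damage the argument; it does, however, remove the fixed-end-time worry you flag at the end, since the free end-time version of the conventional MP delivers the vanishing of the maximized Hamiltonian (D)$'$ directly.

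The genuine gap is condition (A). The Maximum Principle you invoke yields only the weak nontriviality
$\|(q_1^0,\dots,q_N^0)\|_{L^\infty}+\|(q_1,\dots,q_N)\|_{L^\infty}+\lambda+|d|\neq 0$,
and you assert without argument that ``nontriviality (A) is the standard $\|(q_1,\dots,q_N)\|_{L^\infty}+\lambda\neq 0$.'' It is not: one must exclude the degenerate multiplier sets in which $q\equiv 0$ and $\lambda=0$ but $d<0$, or $q\equiv0$, $\lambda=0$, $d=0$ and some $q_l^0\not\equiv 0$. The paper's proof devotes a substantive, problem-specific argument to this: the first case is ruled out by combining the Weierstrass condition \eqref{equal0} with (D)$'$ and integrating over $[0,\bar S]$ to get $0=d\,\bar\beta_N(\bar S)=dC\neq0$; the second by observing that the adjoint equations then force all $q_l^0$ to be one constant $q^0$, after which the maximization and the constraint $\bar\varphi_0(\bar S)=h>0$ yield a contradiction in both subcases $d<0$ and $d=0$. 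Without this step the theorem could be satisfied by trivial multipliers (all the conditions involving only $q$, $\lambda$ and $d$ would be vacuous), so this is a missing idea rather than bookkeeping, and you should supply it.
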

The proof of this theorem and the next are given in Appendix \ref{proof_pmpi}.

In the following, for a fixed impulse process $(\bar \mu,  \bar\nu,\{\bar \w^{r}\}_{ r \in [0,h]},\bar x,\bar v)$ and for any function $h\in C^1({\mathbb R}\times{\mathbb R}^{(M+1)n},\R^n)$,  we set  $ \bar h (t):= h(t,\{ \bar x (t-kh)\})$ and write $\frac{\partial \bar h}{\partial t}(t)$, $\nabla_{x_k}\bar h(t)$  for the partial derivative  w.r.t. $t$ and the Jacobian   w.r.t. the $k$-th delay state variable ($k=1,\dots,M+1$) of $h$ at $(t,\{ \bar x (t-kh)\})$, respectively.    
\begin{theorem} \label{main:th}
Assume  $\xi_0$,  $\Phi\in C^1$ and let $(\bar \mu,  \bar\nu,\{\bar \w^{r}\}_{ r \in [0,h]},\bar x,\bar v)$ be a minimizer for \eqref{impulse_optimal}. Then, there exist constants $\lambda\geq0$ and $d\leq0$ ($d=0$ if $\bar v(T)<C)$ and paths $(p^0,p)\in BV([0,T], {\mathbb R}\times{\mathbb R}^n)$, right continuous on $]0,T]$, with the following properties:

\noindent {\rm (i)} $\lambda + \|p\|_{L^{\infty}(0,T)}\neq 0$;
\vsm
\noindent {\rm (ii)} $(p^0,p)(Nh)=(\alpha_N^h,\eta_N^h)(1)$,  $(p^0,p)(lh)=(\alpha_{l+1}^0,\eta_{l+1}^0)(1)$ if $l=1,\dots,N-1$,
 {\small\bel{eq_p}
\begin{split}
p(t)&=\eta_l^0(1)-\sum_{i=0}^M \int_{(l-1)h}^t  p(t'+ih)\cdot \nabla_{x_{i+1}} \bar f(t'+ih) dt' \\
&-\sum_{i=0}^M\sum_{j=1}^m \int_{[(l-1)h,t ]} p(t'+ih) \cdot   \nabla_{x_{i+1}} \bar g_j(t'+ih)  d(\bar\mu^c)^j(t'+ih) \\
&- \sum_{r\in]0,t-(l-1)h]} [\eta_l^r(0)-\eta_l^r(1)], \qquad t\in](l-1)h,lh[, \ \ l=1,\dots,N, \qquad\qquad
\end{split}
\eeq
 \bel{eq_p01}
\begin{split}
&p^0(t)= \alpha_l^0(1) - \int_{(l-1)h}^t  p(t') \cdot \frac{\partial \bar f}{\partial t}(t') dt'- \sum_{j=1}^m \int_{[(l-1)h,t ]}^t p(t') \cdot   \frac{\partial\bar g_j}{\partial t}(t') d(\bar\mu^c)^j(t')   \\
%&\quad \\
&\   - \chi_{[0,Mh]}(t)\sum_{i=0}^{l-1}\left[ \int_{(l-1)h}^t p(t'-ih)\cdot \nabla_{x_{M+1-i}} \bar f(t'-ih)\cdot     \frac{d\xi_{0}}{dt}(t'-Mh)  dt' \right. \\
&\left.\ + \sum_{j=1}^m \int_{[(l-1)h,t ]} p(t'-ih)\cdot \nabla_{x_{M+1-i}} \bar g_j(t'-ih) \cdot \frac{d\xi_{0}}{dt}(t'-Mh) d(\bar\mu^c)^j(t'-ih)\right]\\
&\  -\sum_{r\in]0,t-(l-1)h]} [\alpha_l^r(0)-\alpha_l^r(1)],  \quad t\in](l-1)h,lh[, \ \ l=1,\dots,N,
\end{split}
\eeq}
where, for any interval $I$ the map $\chi_I(\cdot)$ denotes the characteristic function of $I$  and
 $(p^0(t),p(t))=0$ for any $t>T$. For any $l=1,\dots,N$,  $r\in[0,h]$,  the  functions $\eta_l^r:[0,1]\to {\mathbb R}^n$ and $\alpha_1^r:[0,1]\to{\mathbb R}$ above  satisfy, respectively, 
{\small \bel{eq_eta}
\left\{\begin{array}{l}
\ds-\frac{d\eta_l^r}{ds}(s) = \sum_{i=l}^{(l+M)\wedge N} \eta_i^r(s) \cdot \Big( \sum_{j=1}^m \nabla_{x_{i-l+1}} g_j(r+(i-1)h, \{\bar\zeta^r_{i-k}(s)\})(\bar \w_i^r)^j(s) \Big) , \\ 
\eta_l^0(0)=\eta_{l-1}^h(1), \quad \eta_l^r(0)=
p^-(r+(l-1)h) \ \ \text{if $r\in]0,h]$},  
\end{array}\right.
\eeq}
%while $\alpha_1^r:[0,1]\to{\mathbb R}$ satisfies the equation
{\small \bel{eq_alpha1}
\left\{\begin{array}{l}
\ds-\frac{d\alpha_l^r}{ds}(s) =  \eta_l^r(s) \cdot \Big( \sum_{j=1}^m \frac{\partial g_j}{\partial t}(r+(l-1)h, \{\bar\zeta^r_{l-k}(s)\}) (\bar \w_l^r)^j(s)\Big)  +\mathbb{I}_l^M \sum_{i=1}^l \eta_{l-i+1}^r(s)\\
 \ds  \cdot  \Big( \sum_{j=1}^m \nabla_{x_{M+2-i}}g_j(r+(l-i)h, \{\bar\zeta_{l-i+1-k}^r(s)\}) (\bar \w_{l-i+1}^r)^j (s)\Big)  \cdot 
\frac{d\xi_{0}}{dt}(r-(M+1-l)h), \\
\alpha_l^0(0)=\alpha_{l-1}^h(1), \quad  \alpha_l^r(0)=
(p^0)^-(r+(l-1)h) \ \  \text{if $r\in]0,h]$},
\end{array}\right.
\eeq}
%\end{itemize}
Here, $\bar\zeta_l^r$, $\bar\theta_l^r$ are the functions associated with  $(\bar \mu,  \bar\nu,\{\bar \w^{r}\}_{ r \in [0,h]},\bar x,\bar v)$  defined as in \eqref{jump_x1},   \eqref{jump_v1}, respectively, and
$(p^0(0),p(0)):=(\alpha_0^h(1),\eta_0^h(1))$.
 % \mm{and $(\alpha_l^r(s),\eta_l^r(s)):=0$ for any integer $l>N$,  any $r\in[0,h]$ and  $s\in[0,1]$.}
%\begin{itemize}
\vsm
 \noindent {\rm (iii)} $-p(T) \in \lambda \nabla\Phi(\bar x(T)) + N_\T(\bar x(T))$;
\vsm
 \noindent {\rm (iv)} $\ds \max_{w\in\K, \, \|w\|=1} p(t)\cdot \sum_{j=1}^N \bar g_j(t) w^j +d \leq 0$ for all $t\in[0,T]$;
\vsm
\noindent {\rm (v)} $\ds {\rm supp}(\bar\nu) \subset \Big\{t\in[0,T] \text{ : } \max_{w\in\K, \, \|w\|=1} p(t)\cdot \sum_{j=1}^N \bar g_j(t) w^j +d = 0 \Big\}$;
\vsm
\noindent {\rm (vi)}  $\ds \sum_{l=1}^N [p^0(r+(l-1)h)+p(r+(l-1)h)\cdot \bar f(r+(l-1)h)]=0$ for any $r\in]0,h[$. Moreover, this equality holds at $r=0$ if $\sum_{l=1}^N\int_0^1 \| \bar \w_l^0(s)\| ds=0$ and at $r=h$ if $\sum_{l=1}^N\int_0^1 \big(\| \bar \w_l^0(s)\| + \| \bar \w_l^h(s)\| \big)ds=0$.
\vsm
\noindent {\rm(vii)} For any $r\in[0,h]$ such that $ \sum_{l=1}^N\int_0^1 \|\bar \w_l^r(s)\| ds >0$,  for a.e. $s\in[0,1]$,
\[
\begin{split}
&\sum_{l=1}^N \alpha_l^r(s) + \sum_{l=1}^N\eta_l^r(s) \cdot f(r+(l-1)h, \{\bar\zeta^r_{l-k}(s)\}) \\
&\ \  \leq \max_{l=1,\dots,N} \Big[ \max_{w\in\K, \, \|w\|=1} \eta_l^r(s) \cdot \sum_{j=1}^m g_j(r+(l-1)h, \{ \bar\zeta_{l-k}^r(s)\}) w^j +d\Big] =0.
\end{split}
\]
\noindent{\rm (viii)} For a.e. $s\in[0,1]$ such that $\|\bar \w_i^r(s)\|>0$ for some $r\in[0,h]$ and $i$,  
\[
\begin{split}
&\max_{l=1,\dots,N} \Big[ \max_{w\in\K, \, \|w\|=1} \eta_l^r(s) \cdot \sum_{j=1}^m g_j(r+(l-1)h, \{ \bar\zeta_{l-k}^r(s)\}) w^j +d\Big]\\
&\qquad \qquad =\max_{w\in\K, \, \|w\|=1} \eta_i^r(s) \cdot \sum_{j=1}^m g_j(r+(i-1)h, \{ \bar\zeta_{i-k}^r(s)\}) w^j +d.
\end{split}
\]
%\end{itemize}
\end{theorem}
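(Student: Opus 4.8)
\textbf{Proof strategy for Theorem \ref{main:th}.} The plan is to derive the impulsive Maximum Principle entirely by ``pulling back'' the extended Maximum Principle of Theorem \ref{pmp_extended} through the equivalence established in Theorem \ref{equivalence_th} and Corollary \ref{invert_cor}. First I would apply Corollary \ref{invert_cor} to the given minimizer $(\bar\mu,\bar\nu,\{\bar\w^r\}_{r\in[0,h]},\bar x,\bar v)$ to obtain the corresponding canonical extended process $(\bar S,\bar\varphi_0,\bar\varphi,\bar\tau,\bar y,\bar\beta)$; because the infima of \eqref{impulse_optimal} and \eqref{extended_optimal} coincide and the equivalence preserves feasibility (via the endpoint relations $\bar y_N(\bar S)=\bar x(T)$, $\bar\beta_N(\bar S)=\bar v(T)$), this is a minimizer for \eqref{extended_optimal}. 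Theorem \ref{pmp_extended} then furnishes multipliers $\lambda\ge0$, $d\le0$ and adjoint arcs $(q_l^0,q_l)$ on $[0,\bar S]$ satisfying (A)--(F). The complementarity $d=0$ when $\bar v(T)<C$ transfers immediately from $d=0$ when $\bar\beta_M(\bar S)<C$ (here using $\bar\beta_N(\bar S)=\bar v(T)$ and constancy of $\bar\beta$ beyond the relevant index, or rather the monotone structure of $\beta$).

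The heart of the argument is to undo the time reparametrization. I would define the candidate BV costate $(p^0,p)$ on $[0,T]$ by composing the $N$-fold concatenation $(\tilde q^0,\tilde q)$ (built from the $q_l^0,q_l$ exactly as in \eqref{exsol}) with the discontinuous time change $\tilde\sigma$, i.e. $(p^0,p)(t):=(\tilde q^0,\tilde q)(\tilde\sigma^+(t))$, adjusting right-continuity conventions so that $(p^0,p)$ is right continuous on $]0,T]$; the jump arcs $\eta_l^r,\alpha_l^r$ are obtained by reading off the values of $q_l,q_l^0$ along the plateau intervals $\Sigma^r$ of $\varphi_0$ and rescaling the parameter by $\gamma^r$ as in \eqref{equiv_tr2}. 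With these identifications, the adjoint equation (B) splits, on the set where $\tfrac{d\bar\varphi_0}{ds}>0$, into the integral relations \eqref{eq_p}--\eqref{eq_p01} for $(p^0,p)$ after the change of variables $t=\bar\tau_l(s)$ (using $\bar\tau_l(s)=(l-1)h+\bar\varphi_0(s)$ so that $d\bar\tau_l=d(\bar\mu^c)$-like densities appear via the Radon--Nikodym derivatives $m_0,m_l$ from Section \ref{Smain}); and on each plateau $\Sigma^r$, where $\tfrac{d\bar\varphi_0}{ds}=0$ and $\tfrac{d\bar\varphi_l}{ds}=\tfrac{1}{\phi^+(r)-\phi^-(r)}\bar\w_l^r(\gamma^r(\cdot))$, it becomes \eqref{eq_eta}--\eqref{eq_alpha1} for $\eta_l^r,\alpha_l^r$ after rescaling $s\mapsto\gamma^r(s)$. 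Conditions (C) of Theorem \ref{pmp_extended} give directly (ii)'s concatenation conditions and (iii). The Hamiltonian/maximization statements are obtained by specializing \eqref{zero1}--\eqref{hamimp2}: (D)'s inequality \eqref{zero2} gives (iv) at points $t$ in the image of $\bar\tau$ and, by continuity plus density (Proposition \ref{prop_density}) or by the fact that $\bar\varphi_0$ is surjective onto $[0,h]$, at all $t\in[0,T]$; the support condition (v) follows because $\bar\nu$ charges only the atoms coming from plateaus and the arc-length points where the impulse Hamiltonian is active, i.e. where \eqref{zero3} forces equality in \eqref{zero2}; (vi) is the drift-Hamiltonian vanishing \eqref{hamdrift} read through $\bar\tau_l(s)=(l-1)h+r$ and summed over $l$; and (vii)--(viii) are the restriction of \eqref{equal0}/\eqref{equal0}-type maximization and of the componentwise active-constraint selection to each plateau interval $\Sigma^r$, using condition (i) of Definition \ref{imp_control} (the $\|\bar\w_l^r(s)\|$ are $s$-independent) to pass from an a.e.-$s$ statement on $\Sigma^r$ to the stated form. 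The nontriviality (i) follows from (A) together with the observation that $\|p\|_{L^\infty(0,T)}$ and $\|(q_1,\dots,q_N)\|_{L^\infty(0,\bar S)}$ vanish simultaneously, since $\tilde\sigma$ is onto $[0,\tilde S]$.

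I expect the main obstacle to be the bookkeeping of the time-change near the grid points $t=lh$ and the associated right/left-limit conventions: the concatenation \eqref{exsol} glues the intervals $[(l-1)S,lS[$, the map $\tilde\sigma$ is discontinuous exactly at points where $\varphi_0$ has a plateau, and at $t=lh$ one must correctly match $\bar\w_l^h$ with $\bar\w_{l+1}^0$ (cf. \eqref{mu_i_rel} and Definition \ref{imp_control}(ii.2),(iii.2)) so that the costate jump $\eta_l^h(1)\mapsto\eta_{l+1}^0(0)$ is continuous while the overall $(p^0,p)$ may jump; verifying that the terminal/transversality data propagate correctly backward through this concatenation, and that the $\chi_{[0,Mh]}$ truncation in \eqref{eq_p01} exactly reproduces the $\mathbb{I}_l^M$ indicator and the $\xi_0$-dependent terms of (B), is delicate but routine. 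A secondary technical point is justifying the change of variables in the Lebesgue--Stieltjes integrals: one must invoke absolute continuity of $\mu_L$, $\mu_l^c$, $\nu_l^c$ with respect to $d\phi$ (established in Section \ref{Smain}) to rewrite integrals $\int\!\cdot\,d\bar\varphi_0$ and $\int\!\cdot\,d\bar\varphi_l$ over $s$ as integrals over $t$ against $dt$ and $d(\bar\mu^c)^j$ respectively, and handle the atomic contributions separately as the jump terms $\sum_{r}[\eta_l^r(0)-\eta_l^r(1)]$ etc. Since all the analytic content is already packaged in Theorems \ref{equivalence_th} and \ref{pmp_extended}, the proof is essentially a (careful) translation, and I would present it as such in Appendix \ref{proof_pmpi}.
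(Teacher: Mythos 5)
Your strategy coincides with the paper's proof in Appendix \ref{proof_pmpi}: pass to the canonical extended process via Theorem \ref{equivalence_th} and Corollary \ref{invert_cor}, apply Theorem \ref{pmp_extended} to it, set $(p^0,p)=(\tilde q^0,\tilde q)\circ\tilde\sigma$ and read off $\eta_l^r,\alpha_l^r$ by affinely rescaling $q_l,q_l^0$ on the plateau intervals $[\sigma^-(r),\sigma^+(r)]$, then transport conditions (B)--(F) through the change of variables \eqref{change_var}, splitting each Stieltjes integral into its continuous part against $d(\bar\mu^c)^j$ and the atomic contributions $\sum_r[\eta_l^r(0)-\eta_l^r(1)]$. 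The treatment of (ii)--(viii) is the paper's argument in all essentials.

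The one step that fails as you justify it is the nontriviality condition (i). You claim $\|p\|_{L^\infty(0,T)}$ and $\|(q_1,\dots,q_N)\|_{L^\infty(0,\bar S)}$ vanish simultaneously ``since $\tilde\sigma$ is onto $[0,\tilde S]$''. It is not: $\tilde\sigma$ is the right inverse of $\tilde\tau$, which is constant on every plateau corresponding to an atom of $\bar\nu$, so the image of $\tilde\sigma$ misses the interiors of all the intervals $D:=\cup_k[\tilde\sigma^-(r_k),\tilde\sigma^+(r_k)]$ --- which is exactly where the impulsive part of the adjoint dynamics lives. Hence $\lambda=0$ and $p\equiv 0$ only give $\tilde q=0$ on $[0,\tilde S]\setminus D$ and, by continuity, on the boundary of $D$. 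To reach a contradiction with (A) of Theorem \ref{pmp_extended} you still need the extra observation (made in the paper) that on each plateau the arc $\eta_l^r$ solves the linear homogeneous system \eqref{eq_eta} with zero boundary data $\eta_l^r(0)=\eta_l^r(1)=0$, hence vanishes identically, so that $\tilde q\equiv 0$ on $D$ as well. A second, smaller slip: Definition \ref{imp_control}(i) makes only the sum $\sum_{l}\|\bar\w_l^r(s)\|$ constant in $s$, not each $\|\bar\w_l^r(s)\|$ individually; what (vii) actually uses is that the canonical normalization forces $\sum_{l}\|d\bar\varphi_l/ds\|=1$ a.e.\ on the plateau, so that for a.e.\ $s$ some component is active and \eqref{zero3} applies, after which one concludes by continuity.
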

Some comments are appropriate. 
Condition (ii) is an adjoint equation for $(p_0,p)$ in integral form, having the character of an `advance' ODE, consistent with the standard adjoint equation in optimality conditions for non impulse,  delay optimization problems. Note that the `attached'  controls $\bar \w^r_i$ influence the jumps in $(p_0,p)$, which may occur at times when $\bar\mu$ (and $\bar x$) are continuous. Indeed, both jump equations \eqref{eq_eta} and \eqref{eq_alpha1}  involve  attached controls of several times. Note also that  $p_0$ equation depends on the derivative of the state history function  $\xi_0$.
Properties (v), (vi) correspond to the `constancy of the Hamiltonian condition'. Specifically, the equality in condition (vi) holds at $r=0$ if $t=0$  is not an atom for $\bar\nu$, and at $r=h$ if none of the points $t=ih$, $i=1, \dots, N$, are atoms for $ \bar\nu$.
Condition (v), which pertains to the maximality condition, combines   with  (viii) to provide insights into the support location of the measure $\bar\nu$. It specifically outlines the characteristics of the attached controls that govern the instantaneous state evolution at the atoms of $\bar\nu$.

\section{Conclusions}\label{S_concl}
We have introduced a notion of an impulse process for an impulse control system with time delays, generalizing the method of graph completions originally developed for impulse systems without delays. In particular, the main result of the paper, established in Theorem \ref{equivalence_th},  is an equivalent formulation of  impulse controls and solutions as suitably defined graph completions and graph completion solutions, respectively.  Using these concepts, we have shown existence and uniqueness of the impulse solution associated with an impulse control, the density of the set of trajectories associated with absolutely continuous controls in the set of impulse trajectories, and a compactness result of any subset of impulse solutions associated with BV  controls of equibounded   total variation (when $\K$ is convex).  Finally,  we have obtained a maximum  principle for an associated Mayer problem with endpoint constraints. 
 
The generalization of the  graph completions  technique  to  impulse control problems with time delays  paves the way  for further research in several directions,  such as considering  more general delayed impulse optimization problems with state constraints (to extend, for instance, the results in  \cite{AKP12,AKP19,MS20} to the case with delays);   determining sufficient conditions for the non-occurence of a gap between  the minimum of an impulse problem with time delays and the infimum of the same payoff  over absolutely continuous controls, as done e.g.  in  \cite{MRV,FM1,FM2,FM3,MPR} for the delay-free  case; determining stabilizability criteria in the   fashion of  \cite{LM19,LM21,LM22} when time delays are present. Another  promising research direction would be the development  of  sufficient optimality conditions  expressed in terms of solutions to the highly non-standard  Hamilton-Jacobi-Bellman  equations and boundary conditions  that arise in optimal delay impulse control. There is also potential for developing new nonlinear models of fed-batch fermentation or of delayed neural networks with impulse controls, just to give some examples. Indeed,  there is a wide literature on delayed impulse control systems for such  applications (see e.g. \cite{XSSZ02,GLFX06,LCH20} and references therein), relying, however, on a different model from ours,  in which  impulse controls are given by a finite or countable number of jump instants with a preassigned jump-function.

%\begin{acknowledgements}
%This research is  supported by the  INdAM-GNAMPA Project 2024 ``Non-smooth Optimal Control Problems",    CUP E53C23001670001,  and by PRIN 2022 ``Optimal control problems: analysis, approximation and applications",  2022238YY5.
%\end{acknowledgements}

\appendix  %This command ends the counting of sections.
\section{Proof of Theorem \ref{equivalence_th}} \label{proof_mainth} Initially we recall if $A:[T_1,T_2]\to[S_1,S_2]$ is a (possibly not strictly) increasing function such that $A(T_1)=S_1$, $A(T_2)=S_2$, and $A$ is right continuous on $]T_1,T_2[$, $B$ is its right inverse,   and $F:[T_1,T_2]\to{\mathbb R}^n$ is a Borel function, then   (see \cite[Lemmas 2.4, 2.6]{MiRu})
\bel{change_var}
\int_{[B(s_1),B(s_2)]}F(t) dA(t)=\int_{[s_1,s_2]}F(B(s)) ds   \quad\text{for every $S_1\le s_1<s_2\le S_2$.}
\eeq
(i) Let  $(\mu,\nu,\{\w^r\}_{r\in[0,h]},x,v)$ be an impulse process and define $(S, \varphi_0,\varphi,\tau,y,\beta)$  as in Thm. \ref{equivalence_th},(i). By  Def. \ref{imp_control},(i) ,  \eqref{mu_i_rel}, and  \eqref{normalize}  we deduce that   $(S,  \varphi_0, \varphi)$ is a canonical extended control such that $d\varphi_0(s)/ds=0$ a.e.  $s\in\Sigma^r$ (notice in particular that $d\varphi_0(s)/ds=m_0(\varphi_0(s))$ a.e. by \eqref{change_var} applied to $A=\phi$, $B=\varphi_0$ and $F=m_0$).  By \eqref{jump_x1}  and \eqref{jump_v1}  we have  that, for any  $r\in\J$ and $l =1,\ldots, N$,   $(y_l,\beta_l)$  satisfies 
% $s\in [\phi^- (r_j), \phi^+ (r_j)]$. We define $\{y_i\}$ on
 %$[\phi^- (r_j), \phi^+ (r_j)]$ to be the solution to the system of differential equations
\bel{jump_ode3}
\begin{cases}
%\ds\frac{d\tau_l}{ds}(s) = \frac{d\varphi_0}{ds}(s), \\
\ds\frac{dy_l}{ds}(s)= \sum_{j=1}^m g_j\left((l-1)h+r, \{y_{l-k}(s)\}_{k}\right) \frac{d\varphi_l^j}{ds}(s)  \quad   \mbox{ a.e. \  } s \in \Sigma^r, \\
\ds\frac{d\beta_l}{ds}(s)= \left\|\frac{d\varphi_l}{ds}(s)\right\| \quad  \mbox{ a.e. \  } s \in \Sigma^r, \\
%\ds\frac{dy_{-l}}{ds}(s) =0 \quad \text{a.e. $s\in[0,S]$, \ $l=0,\dots,N-1$,} \\
%\mm{\varphi_l(S)=\varphi_{l+1}(0)\qquad l=1,\dots,M-1,} \\
(y_l,\beta_l)  (\phi^- (r)) = 
(\zeta^r_l, \theta^r_l)(0).
\end{cases}
\eeq
%($\zeta^r_i (0)$ as in \eqref{jump_ode2}).
% (We interpret  the left limit $x_i^- (r) = x_i (0)$, if  $r=0$.)
Take  $l=1,\ldots,N$, %$s \notin D_{\mbox{singular}}$ such that 
$s \in ]0,S[$ and 
let $t =\tau_l(s)$. 
%We have $\pi([0,r]) = \phi^+ (r)$.  %Then $\eta^{-1}(r) =\{s\}$, for some $r \in (0,h]$.  
The  properties of $(y_l,\beta_l)$ on the $\Sigma^r$'s yield
\[
\begin{split}
&x(t)=   \zeta^0_l (1) +\int_{(l-1)h}^t  f(t', \{x(t'-kh\})dt' 
+ \int_{[(l-1)h,t]} \sum_{j=1}^mg_j(t', \{x(t'-kh\})(\mu^{c})^j(dt') \\
&\ \quad\qquad\qquad+  \underset{ \{r\in\J\,:\, r\in]0,t-(l-1)h] \}}{\sum}  \left[y_l (\phi^+(r) )- y_l (\phi^- (r) )\right], \\
&v(t)=   \theta^0_l (1)  + \int_{[(l-1)h,t]} |\mu^{c}|(dt')
+ \underset{ \{r\in\J\,:\, r\in]0,t-(l-1)h] \}}{\sum}  \left[\beta_l (\phi^+(r) )- \beta_l (\phi^- (r) )\right].
\end{split}
\]
 But, in consequence of  \eqref{change_var} and arguing as in \cite{FMV1} one has
%and the properties of $\phi\equiv\sigma$
%and  relation (\ref{induced_meas})
\[
\begin{split}
&\int_{(l-1)h}^t  f(t', \{x(t'-kh\})dt' = \int_{[0, s]}  f(\tau_l(s'),\{y_{l-k}(s')\})\frac{d\varphi_0}{ds}(s')\,ds', \\
& \int_{[(l-1)h,t]} \sum_{j=1}^mg_j(t', \{x(t'-kh\})(\mu^{c})^j(dt')=  \\
&\qquad\qquad\qquad\qquad=\int_{[0, s]}  \sum_{j=1}^mg_j(\tau_l(s'), \{y_{l-k}(s')\})\frac{d\varphi^j_l}{ds}(s')  \chi_{[0,S]\backslash \cup_{r\in\J}\Sigma^r}(s')ds'\,, \\
& \int_{[(l-1)h,t]}  |\mu^{c}|(dt')=\int_{[0, s]} \left\|\frac{d\varphi_l}{ds}(s')\right\|  \chi_{[0,S]\backslash \cup_{r\in\J}\Sigma^r}(s')ds'\,.
\end{split}
\]
%(We have used the facts that $y_l (s')=x(\tau_l(s'))$ for a.e. $s' \notin  D$, and  
% %$s \in [\phi^- (r), \phi^+ (r)]$ and 
% $\frac{d\varphi_0}{ds}=0$, ${\mathcal L}$-a.e. on  $\Sigma^{t-(l-1)h}$.)
% Furthermore (since for any $r\in\J$,   $m_l (\varphi_0 (s))=0$ a.e. on  $\Sigma^r$ and  $s'=\phi(\varphi_0(s'))$ if $s'\notin \cup_{r\in\J}\Sigma^r$), 
% %applying Lemma  \ref{lem_3_3}   
% we also have
%
%and
%$$
% \int_{[(l-1)h,t]}  |\mu^{c}|(dt')=\int_{[0, s]} \left\|\frac{d\varphi_l}{ds}(s')\right\|  \chi_{[0,S]\backslash \cup_{r\in\J}\Sigma^r}(s')ds'\,.
%$$
%Take any $s \notin D_{\mbox{singular}}$ and let $r =\eta(s)$.  
If $s\in]0,S[\setminus  \cup_{r\in\J}\Sigma^r$,  observing that  $\zeta^0_l (1)=y_l(0)+ (y_l (\phi^+(0) )- y_l(\phi^- (0) ))$  and $\theta^0_l (1)=\beta_l(0)+ (\beta_l (\phi^+(0) )- \beta_l(\phi^- (0) ))$,   from the above relations it follows, for each $l=1,\dots,N$,
\begin{equation}
\label{y_eqn}
\begin{array}{l}
\ds y_l(s) =y_l (0)  + \int_{[0, s]}  f(\tau_l(s'), \{y_{l-k}(s')\})\frac{d\varphi_0}{ds}(s') ds'  \\
 \ds \  +\int_{[0, s]}  \sum_{j=1}^mg_j(\tau_l(s'), \{y_{l-k}(s')\})\frac{d\varphi^j_l}{ds}(s') ds' , \quad
  \ds \beta_l(s) =\beta_l (0)   +
  \int_{[0, s]}  \left\|\frac{d\varphi_l}{ds}(s') \right\|ds'.
  \end{array} 
 \end{equation}
If instead  $s \in ]0,S[\cap \Sigma^r$ for some  $r\in\J$,  since $\phi^- (r)$ is a density point of $[0,S]\backslash \cup_{r\in\J}\Sigma^r$ and the $(y_l,\beta_l)$'s are continuous,  the relations  in   (\ref{y_eqn}) are valid at $s^-:=\phi^- (r)$ for  each $l$.
But then,  %exploiting the definitions of $(y_l,\beta_l)$ on $\Sigma^r$ and the fact that 
%$d\varphi_0/ds=0$  a.e. on $\Sigma^r$, 
in view of \eqref{jump_ode3} and since  $d\varphi_0/ds=0$  a.e. on $\Sigma^r$,
we see that  \eqref{y_eqn} is satisfied for arbitrary $s \in ]0,S[$\,.  By definition, $(y_l,\beta_l)(S)= (\zeta_l^h,\theta_l^h)(1)$. Hence, if $S\notin \cup_{r\in\J}\Sigma^r$ or $l=N$,  then $(y_l,\beta_l) $ satisfies \eqref{y_eqn} on $]0,S]$ by continuity.  If, on the other hand,  $l\in\{1,\dots,N-1\}$ and $S \in \Sigma^r$  for some   $r\in\J$, then $r  =h$ and we derive that \eqref{y_eqn} still holds on  $]0,S]$ arguing as above, by \eqref{jump_ode3}.   
 Furthermore, note that the  $(y_l,\beta_l)$'s satisfy the boundary conditions in \eqref{aux_system} by \eqref{cond_finy}.
%. In particular,  
%$$
%(y_1,\beta_1)(0)=(x_0,0), \ \ (y_{l+1}, \beta_{l+1})(0)=(\zeta_l^h,\theta_l^h)(1)=(y_l,\beta_l)(S), \  l=1,\dots,N-1.
%$$
%notice that if $S\notin D$ or $i=N$,  then $y_i (S)=x (\eta (S)+(i-1)h)= x (ih)$.   If, on the other hand,  $i\in\{1,\dots,N-1\}$ and $S \in [\phi^- (r_j), \phi^+ (r_j) ]$  for some index value $j$, then $r_j =h$ and $x(ih)=\zeta^0_{i+1}$,  $S = \phi^+ (r_j)$ and $y_i (S) =x( \phi^+ (r_j)+(i-1)h)= x(h)$. Since $x_i (h) =x_{i+1}(0)$, for $i =1,\ldots, N-1$, we conclude that 
%$$
%y_i (S)= x_i (h) =  x_{i+1} (0) =y_{i+1}(0), \mbox{ for }  i =1,\ldots, N-1.
%$$
%If $S_0 \notin D_{\mbox{singular}}$ then $h = \eta (S_0)$ and $y_N (S_0) = x_N (\eta(S_0)) =x_N (h)$. Suppose, on the other hand, that $S_0 \in [\phi^- (r_j), \phi^+ (r_j) ]$ for some index value $j$. Since $\alpha_N (s) = 0$ for a.e. $s \in [S_0, S]$, we know that $S_0 = \phi^+ (r_j)$. Then $y_N(S_0) (= y(\phi^+ (r_j))) = x_N(T- (N-1)\times h)$. We have shown, in both cases, 
%$$
%y_N (S_0) = x_i( T- (N-1) \times h) = x(T)\,.
%$$
 Reviewing the preceding relations,    $(S,\varphi_0,\varphi, \tau, y, \beta)$ is a canonical extended process.  Moreover,  \eqref{equiv_tr2} implies that $(x,v)(t)=(\tilde y,\tilde\beta)(\tilde\sigma(t))$ for all $t\in](l-1)h,lh[$, $l=1,\dots,N$,  such that 
 $r:=t- (l-1)h\notin\J$. This is actually true for all $t\in[0,T]$, because of the right continuity of $(x,v)$ and $\tilde\sigma$. 
In conclusion, assertion (i)  has been confirmed.

 \vsm
  
(ii)   Fix a canonical extended process  $(S,  \varphi_0, \varphi, \tau, y, \beta)$  and let $(\mu,\nu,\{\w^r\}_{r\in[0,h]},x,v)$ be as in  Thm. \ref{equivalence_th},(ii).  To prove that $\nu\in\V_c(\mu)$, note that $\nu^c=|\mu^c|$   by \cite[Thm. 6.1]{FM224}. Furthermore, by \cite[Thm. 5.1]{AR15},   the right inverse $\sigma$ of $\varphi_0$ can be point-wisely approximated by  a sequence of   strictly increasing, absolutely continuous, onto  functions $\sigma_i:[0,h]\to[0,S]$  with  strictly increasing, 1-Lipschitz continuous inverse functions   $\varphi_{0_i}$  which converge uniformly to $\varphi_0$. 
For every $i\in\N$,  let  $(S,  \varphi_{0_i}, \varphi, \tau_i, y_i, \beta_i)$ be the  extended process associated with   $(S,  \varphi_{0_i}, \varphi)$ and define $(\tilde\varphi,\tilde\tau,\tilde y, \tilde \beta)$ and  $(\tilde\sigma,\tilde\sigma_i, \tilde \tau_i)$ associated to  $(\varphi, \tau, y, \beta)$ and  $(\tilde\sigma,\sigma_i, \tau_i)$, respectively,  according to \eqref{exsol}.
% and let  
%\[
%\begin{split}
%&\tilde\sigma_i(t):=\sum_{l=1}^N[((l-1)S+\sigma_i(t-(l-1)h)]\chi_{[(l-1)h,lh[}(t), \quad \text{for all $t\in[0,T]$,} \\
%&\tilde\tau_i(s):=\sum_{l=1}^N[((l-1)h+\varphi_{0_i}(s-(l-1)S)]\chi_{[(l-1)S,lS[}(s), \quad \text{for all $s\in[0,NS]$,}\\
%&\tilde\sigma_i(T)=NS, \qquad \tilde\tau_i(NS)=T.
%\end{split}
%\]
Note that  $\tilde\sigma_i(T)=NS$, $\tilde\tau_i(NS)=T$,    $\tilde\tau_i$ coincides with $\tilde\sigma_i^{-1}$  and   converges uniformly to $\tilde\tau$, while $\tilde\sigma_i(t)\to\tilde\sigma(t)$  for every $t\in]0,T]\setminus\{h,\dots,(N-1)h\}$. Consider the sequence of absolutely continuous measures $\mu_i$ given by 
$$
\mu_i([0,t]):= \int_0^{\tilde\sigma_i(t)}\frac{d\tilde\varphi}{ds}(s)\,ds=\tilde\varphi(\tilde\sigma_i(t)), \quad\text{for $t\in[0,T]$,}
$$
so that
$|\mu_i|([0,t])= \int_0^{\tilde\sigma_i(t)}\left\|\frac{d\tilde\varphi}{ds}(s)\right\|\,ds=\tilde\beta(\tilde\sigma_i(t))$
for $t\in[0,T]$. Clearly,  $\lim_i\mu_i([0,t])=\mu([0,t])$ and $\lim_i|\mu_i|([0,t])=\nu([0,t])$ for all $t\in]0,T]\setminus\{h,\dots,(N-1)h\}$. Hence, from  \cite[Prop. 5.2]{VP} it follows  that $(\mu_i,|\mu_i|) \weak(\mu,\nu)$. The proof that $\nu\in\V_c(\mu)$ is thus complete.  

To show that $\{\w^r_l\}_{r\in[0,h]}$ is attached to $(\mu,\nu)$, observe that  Def.  \ref{imp_control}, (i)  holds since the extended control $(S,\varphi_0,\varphi)$ is canonical, while  \ref{imp_control},(ii)-(iii) can be derived from \eqref{u_v}, since  
\begin{equation}\label{2_2_0tilde}
 \tilde\sigma^- (r+(l-1)h)= \sigma^- (r)+(l-1)S, \quad \tilde\sigma^+ (r+(l-1)h)= \sigma^+ (r)+(l-1)S,
  \end{equation}
for any $i=1,\dots,N$ and  $r\in]0,h[$, and 
  \begin{equation}\label{2_2_0ih}
  \begin{array}{l}
  \tilde\sigma^- (0)= \sigma^- (0)=0, \quad \tilde\sigma^+ (0)= \sigma^+ (0), \\[1.5ex]
   \tilde\sigma^- (lh)= \sigma^- (h)+(l-1)S,  \quad \tilde\sigma^+(lh) = \sigma^+ (0)+lS, \quad l=1,\dots,N-1,
    \\[1.5ex]
    \tilde\sigma^- (Nh)= \sigma^- (h)+(N-1)S, \quad \tilde\sigma^+ (Nh)= \sigma^+ (h)+(N-1)S=NS.
 \end{array}
  \end{equation}
%\mm{In the final version, we could skip this part in red: Hence,  by   \eqref{u_v}   for any $l=1,\dots,N$, $r\in]0,h[$, and $j=1,\dots, m$,  we have
%$$
%\begin{array}{l}
%\ds\int_0^1(w^r_l)^j(s)\,ds=(\sigma^+(r)-\sigma^-(r)) \int_0^1\frac{d\varphi^j_l}{ds} \Big((\sigma^+(r)-\sigma^-(r))\,s + \sigma^-(r)\Big)\,ds \\[1.5ex]
%\ds=\varphi^j_l(\sigma^+(r))-\varphi^j_l(\sigma^-(r)) =\tilde\varphi^j(\tilde\sigma^+ (r+(l-1)h))-\tilde\varphi^j(\tilde\sigma^- (r+(l-1)h))=\mu^j(\{r+(l-1)h\}),   \\[1.5ex]
%\ds\int_0^1\|w^r_l(s)\|\,ds=(\sigma^+(r)-\sigma^-(r)) \int_0^1\left\|\frac{d\varphi_l}{ds} \Big((\sigma^+(r)-\sigma^-(r))\,s + \sigma^-(r)\Big)\right\|\,ds\\[1.5ex]
%\ds=\beta_l(\sigma^+(r))-\beta_l(\sigma^-(r)) =\tilde\beta(\tilde\sigma^+ (r+(l-1)h))-\tilde\beta(\tilde\sigma^- (r+(l-1)h))=\nu(\{r+(l-1)h\}), 
%\end{array}
%$$
%Instead,  for every $l=0,\dots,N$,  we get (setting $(\varphi_l,\beta_l)\equiv(0,0)$ if $l=0$ and   $\equiv(\varphi_N,\beta_N)(S)$ if $l=N+1$ and recalling that   $(\varphi_{l+1},\beta_{l+1}) (0)=(\varphi_l,\beta_l)(S)$)
% \begin{equation}\label{2_2_ih}
% \begin{array}{l}
%\ds\int_0^1 [(\w^h_l)^j (s)+(\w^0_{l+1})^j] ds =[\varphi_l^j(\sigma^+(h))-\varphi_l^j(\sigma^-(h))]+[\varphi_{l+1}^j(\sigma^+(0))-\varphi_{l+1}^j(\sigma^-(0))]  \\[1.5ex]
%\ds=\varphi_{l+1}^j(\sigma^+(0))-\varphi_l^j(\sigma^-(h))=\mu^j (\{lh\}) \quad\text{for all $j=1,\dots, m$,} \\[1.5ex]
%\ds\int_0^1 [\|\w^h_l (s)\|+\|\w^0_{l+1}\|] ds = \beta_{l+1}(\sigma^+(0))-\beta_l(\sigma^-(h))= 
% \nu  (\{lh\}).
% \end{array}
% \end{equation} }
It only remains to prove that the g.c. solution $(x,v)= (\tilde y,\tilde\beta)\circ\tilde\sigma$  is an impulse solution associated with the impulse control $(\mu,\nu,\{\w^r\}_{r\in[0,h]})$.  For this purpose, define the function  $\phi$ as in \eqref{phi}  and  let $\hat\varphi_0$ be its right inverse. Note that $\phi(h)=h+\sum_{l=1}^N\int_0^h\nu_l(dt)=h+\nu([0,T])=S$ by $\nu_l$'s definition, since  $(S,\varphi_0,\varphi)$ is canonical.
%\mm{ Maybe, in the final version we can obscure this part:} Indeed, it holds 
%\[
%S=\int_0^S ds= \int_0^S [d\varphi_0/ds + \|d\varphi/ds\|] ds= \tau_1(S)+\beta_N(S)=h+\nu([0,T]).
%\]
We claim that
$
\varphi_0\equiv\hat\varphi_0.
%=\inf\{r\in[0,h] \text{ : }\phi(r)>s \} \qquad \text{for any $s\in]0,S]$.}
$
First, we have $\phi(\varphi_0(s))\geq s$ for any $s\in[0,S]$, as $\sigma(\varphi_0(s))\geq s$ for any $s\in]0,S]$ and by \eqref{nul}, \eqref{u_v},  
%$(S,\varphi_0,\varphi)$ is canonical and 
 we get %the very definition of the $\nu_l$'s and $\w^r$'s implies
\[
\begin{split}
\phi(\varphi_0(s))%&= \varphi_0(s) + \sum_l \nu_l([0,\varphi_0(s)])\\
&=\varphi_0(s) + \sum_{l=1}^N \int_{0}^{1} | \w^0_l(s) | ds + \sum_{l=1}^N \int_{\sigma^+(0)}^{ \sigma^+(\varphi_0(s))} \Big\|\frac{d\varphi_l}{ds'}(s')\Big\| ds' \\
&\geq \varphi_0(s) + \sum_{l=1}^N \int_{0}^{\sigma^+(0)} \Big\|\frac{d\varphi_l}{ds'}(s')\Big\| ds' +\sum_{l=1}^N \int_{\sigma^+(0)}^{s} \Big\|\frac{d\varphi_l}{ds'}(s')\Big\| ds' \\
& = \int_0^s \Big[\frac{d\varphi_0}{ds'}(s')+\sum_{l=1}^N \Big\|\frac{d\varphi_l}{ds'}(s')\Big\|\Big] ds' = s,
\end{split}
\]
while $\phi(\varphi_0(0))=\phi(0)=0$ and $\phi(\varphi_0(S))=\phi(h)=S$. From the above relations it also follows that $\phi(\varphi_0(s))=s$ if $\sigma$ is continuous at $\varphi_0(s)$, as in this case $\sigma(\varphi_0(s))=s$.
Suppose by contradiction that there exists $r<\varphi_0(s)$ such that $\phi(r)\geq s$. Since $\phi$ is strictly increasing it follows that $s\leq \phi(r) < \phi(\varphi_0(s))$. Then, in view of the argument above, $\sigma$ is not continuous at $\varphi_0(s)$, hence $\nu$ has a jump and $\phi$ is discontinuous at $\varphi_0(s)$. Therefore, there exist $s_1$, $s_2\in[0,S]$ such that $s\in[s_1,s_2]$ (the largest interval containing $s$ where $\varphi_0$ is constant), $\phi(\varphi_0(s'))=\phi(\varphi_0(s_2))=s_2$ for any $s'\in[s_1,s_2]$ and $\phi(r')<s_1$ for any $r'<\varphi_0(s)$. A contradiction.
Therefore, the functions $\hat\varphi_0$ and $\phi$ coincide with the given  function $\varphi_0$  and its right inverse $\sigma$, respectively.  But then, from statement (i) it readily follows that $(S,\varphi_0,\varphi)$ coincides with the extended control  constructed in part (i) starting from the impulse control $(\mu,\nu,\{\w^r\}_{r\in[0,h]})$. Since $(\tau,y,\beta)$ is the unique extended solution of system \eqref{aux_system} associated with $(S,\varphi_0,\varphi)$, by reading the relations of part (i) in reverse time we see that  $(\mu,\nu,\{\w^r\}_{r\in[0,h]},x,v)$ is an impulse process and also that  the  canonical extended process corresponding to  $(\mu,\nu,\{\w^r\}_{r\in[0,h]},x,v)$ constructed as in statement {\rm(i)} is exactly  $(S,  \varphi_0, \varphi, \tau, y, \beta)$.   \qed

\section{Proof of Theorem \ref{pmp_extended} and Theorem \ref{main:th}} \label{proof_pmpi}
 \begin{proof}[of Thm. \ref{pmp_extended}] Let $(\bar S, \bar\varphi_0,\bar\varphi,\bar\tau,\bar y, \bar\beta)$ be a minimizer for  \eqref{extended_optimal}. Since the extended problem is a conventional control problem, by a standard version of the Maximum Principle  (see e.g.  \cite[Thm. 8.7.1]{OptV}) there exist  $\lambda\geq0$,   adjoint arcs $(q_l^0,q_l)\in W^{1,1}([0,\bar S],\R\times\R^n)$ (associated with $\bar\tau_l$ and $\bar y_l$, respectively) and $d\leq 0$ ($d=0$ if $\bar\beta_N(\bar S)<C$) satisfying relations (B), (C) and \eqref{equal0} of the statement of the theorem together with the non triviality condition
\[
{\rm (A)'} \quad  \|(q_1^0,\dots,q_N^0)\|_{L^\infty}+\|(q_1,\dots,q_N)\|_{L^\infty}+\lambda +|d| \neq 0  \qquad\qquad\qquad\qquad\qquad
\]
and the `constancy of the Hamiltonian' condition
\[
{\rm (D)'} \ \max_{(w_0,w)\in\W} H(\{\bar\tau_l(s)\},\{\bar y_l(s)\}, \{q_l^0(s)\},\{q_l(s)\}, d,w_0,w)=0 \ \ \ \text{for all $s\in[0,\bar S]$.}
\]
Condition (A)$'$  can be strengthened to take the form of conditions (A) in the statement of the theorem. Indeed, first, 
we have
\bel{ntr}
\|(q_1^0,\dots,q_N^0)\|_{L^\infty}+\|(q_1,\dots,q_N)\|_{L^\infty}+\lambda\neq0,
\eeq
as, if it were not true, then $d<0$ and $\bar\beta_N(\bar S)=C>0$. But combining \eqref{equal0} and (D)$'$ and integrating over $[0,\bar S]$ we obtain the contradiction $0=d\bar\beta_N(\bar S)=dC$. Then, supposing that condition (A) is not satisfied,  by the adjoint equation (B) and the transversality condition (C) we should have that $q_1^0=\dots=q_N^0=:q^0$ are constants. Moreover,  condition ${\rm (D)'}$ reads 
\bel{maxHam}
\begin{split}
 Nq^0 \frac{d\bar\varphi_0}{ds}(s) +d \sum_{l=1}^N \Big\| \frac{d\bar\varphi_l}{ds}(s)\Big\| =0 \ \text{ a.e. $s\in[0,\bar S]$}, \quad
 \max_{(w_0,w)\in\W} \{Nq^0 w_0 +d  \| w\|\} = 0.
\end{split}
\eeq
If $d<0$, for $w_0=1$ in   \eqref{maxHam}$_2$ we obtain $q^0\leq 0$, but by  \eqref{maxHam}$_1$ we have $d\bar\varphi_0(s)/ds >0$ a.e., otherwise $ \sum_{l=1}^N \| d\bar\varphi_l(s)/ds \| =1$ on a set of positive measure, which implies $d=0$, a contradiction. Hence, we get
$
q^0 =- \frac{d}{N} \, \frac{ \sum_{l=1}^N \Big\| \frac{d\bar\varphi_l}{ds} (s) \Big\|}{\frac{d\bar\varphi_0}{ds} (s)} \geq 0, 
$
so that $q^0=0$, in  contradiction of \eqref{ntr}. If instead $d=0$, then $q^0\ne0$  and \eqref{maxHam}$_1$ implies that $d\bar\varphi_0(s)/ds =0$ a.e., but then $0=\bar\varphi_0(0)=\bar\varphi_0(\bar S)=h$.
 
Now we prove condition (D). From (D)$'$, it follows that for any $s\in[0,\bar S]$ and each $(w_0,w)\in\W$, we have  
$
H(\{\bar\tau_l(s)\},\{\bar y_l(s)\}, \{q_l^0\},\{q_l\}, d,w_0,w) \leq 0.
$
Therefore, by choosing either $w_0=0$ or $w=0$   we deduce that $H^{\rm (imp)}\leq 0$ and $H^{\rm (dr)}\leq 0$, respectively. 
In particular, \eqref{zero2} follows by choosing $w\in\K^N$ such that $\|w_l\|=1$. 
Now suppose by contradiction that both  $H^{\rm (dr)}$ and  $H^{\rm (imp)}$ are smaller than 0 at some $s\in[0,\bar S]$ and let $(\bar w_0,\bar w)$ be the element in $\W$ that realizes the maximum in (D)$'$. In case either $\bar w_0=0$ or $\bar w=0$ the contradiction is trivial.
In case $\bar w_0>0$ and $\|\bar w\|>0$, then $\hat w:=\bar w/ \sum_{l=1}^N \|\bar w_l\|$ is such that $\sum_{l=1}^N \|\hat w_l\|=1$, hence  it follows
\bel{reasoning}
\begin{split}
&\bar w_0 \Big[\sum_{l=1}^N q_l^0(s) +  \sum_{l=1}^N q_l(s) \cdot f(\bar\tau_l(s),\{\bar y_{l-k}(s)\}) \Big] <0, \\
& \sum_{l=1}^N q_l(s) \cdot \Big( \sum_{j=1}^m g_j(\bar\tau_l(s),\{\bar y_{l-k}(s)\})\hat w_l^j \Big) + d  \sum_{l=1}^N \|\hat w_l\| <0.
\end{split}
\eeq
But multiplying the second relation above by $ \sum_{l=1}^N \|\bar w_l\|$ and adding what is obtained to the first one we get the desired contradiction with (D)$'$.

Let us prove \eqref{zero3}. Let $\NN$ be the null Lebesgue measure subset of $[0,\bar S]$, where \eqref{equal0} is not satisfied. Let $s\in[0,\bar S]\setminus\NN$ be such that $\|d\bar\varphi_l(s)/ds\|>0$ for some $l=1,\dots,N$ and, by \eqref{zero2}, assume by contradiction that 
$
\max_{w\in\K, \, \|w\|=1} q_l(s)\cdot \sum_{j=1}^m g_j(\bar\tau_l(s),\{\bar y_{l-k}(s)\}) w^j+d < 0.
$
Then, reasoning as in \eqref{reasoning} for $\bar w:=d\bar\varphi_l(s)/ds$, we deduce that 
\[
 q_l(s)\cdot \Big(\sum_{j=1}^m g_j(\bar\tau_l(s),\{\bar y_{l-k}(s)\}) \frac{d\bar\varphi^j_l}{ds}(s)\Big)+d \Big\|\frac{d\bar\varphi_l}{ds}(s) \Big\| <0.
\]
Notice that, again by \eqref{zero2} and arguing as in \eqref{reasoning}, the left hand side of the above relation is $\leq0$ when the index $l$ is replaced by any $i\in\{1,\dots,N\}$, so that
%
%But using the same arguments, from \eqref{zero1} it follows that 
%\[
%q_i(s)\cdot \Big(\sum_{j=1}^m g_j(\bar\tau_i(s),\{\bar y_{i-k}(s)\}) \frac{d\bar\varphi^j_i}{ds}(s)\Big)+d \Big\|\frac{d\bar\varphi_i}{ds}(s) \Big\| \leq 0 \qquad \forall i=1,\dots,N,
%\]
%so that 
\[
\sum_{l=1}^N q_l(s)\cdot \Big(\sum_{j=1}^m g_j(\bar\tau_l(s),\{\bar y_{l-k}(s)\}) \frac{d\bar\varphi^j_l}{ds}(s)\Big)+d \sum_{l=1}^N \Big\|\frac{d\bar\varphi_l}{ds}(s) \Big\| <0.
\]
But by \eqref{equal0} and the fact that $H^{\rm (dr)}\leq 0$ and $d\bar\varphi_0(s)/ds \geq0$ we get
\[
\begin{split}
0&= \frac{d\bar\varphi_0}{ds}(s) H^{\rm (dr)}(\{\bar\tau_l(s)\},\{\bar y_l(s)\}, \{q_l^0(s)\},\{q_l(s)\}) \\
&\qquad\qquad+ \sum_{l=1}^N q_l(s)\cdot \Big(\sum_{j=1}^m g_j(\bar\tau_l(s),\{\bar y_{l-k}(s)\}) \frac{d\bar\varphi^j_l}{ds}(s)\Big)+d \sum_{l=1}^N \Big\|\frac{d\bar\varphi_l}{ds}(s) \Big\| <0.
\end{split}
\]
This proves \eqref{zero3}. Now, again by \eqref{zero1} and \eqref{zero2}, we deduce that  
%Since we have proved that $H^{\rm (imp)}\leq 0$ and $H^{\rm (dr)}\leq 0$ for any $s$, reasoning as in \eqref{reasoning} for $(\bar w_0,\bar w):=(d\bar\varphi_0(s)/ds, d\bar\varphi(s)/ds)$ at some $s\in[0,\bar S]\setminus\NN$, we deduce that 
\[
\begin{split}
&\frac{d\bar\varphi_0}{ds}(s) \Big[\sum_{l=1}^N q_l^0(s) +  \sum_{l=1}^N q_l(s) \cdot f(\bar\tau_l(s),\{\bar y_{l-k}(s)\}) \Big] \leq 0\\
&\sum_{l=1}^N q_l(s) \cdot \Big( \sum_{j=1}^m g_j(\bar\tau_l(s),\{\bar y_{l-k}(s)\}) \frac{d\bar\varphi_l^j}{ds}(s) \Big) + d  \sum_{l=1}^N \Big\|\frac{d\bar\varphi_l}{ds}(s) \Big\|\leq0, \quad s\in[0,\bar S]\setminus\NN.
\end{split}
\]
But in view of \eqref{equal0} and \eqref{zero1}, the sum of the two above terms is 0, hence they have to be both 0, so that \eqref{hamdrift} and \eqref{hamimp} are proved.
It remains to show \eqref{hamdrift2} and \eqref{hamimp2}. If $H^{\rm (dr)}<0$ at some $s\in[0,\bar S]\setminus\NN$, then \eqref{hamdrift} implies $d\bar\varphi_0(s)/ds=0$, hence $ \sum_{l=1}^N \|d\bar\varphi_l(s)/ds\|=1$, so that \eqref{hamdrift2} follows by \eqref{zero1}%, \eqref{equal0}
 and \eqref{hamimp}.
If instead $H^{\rm (imp)}<0$ at some $s\in[0,\bar S]\setminus\NN$, then $d\bar\varphi(s)/ds=0$ (otherwise dividing \eqref{hamimp} by $ \sum_{l=1}^N\|d\bar\varphi_l(s)/ds\|$ we would reach a contradiction). Therefore, $d\bar\varphi_0(s)/ds=1$ and thanks to \eqref{zero1} %and \eqref{hamdrift} 
we conclude. \qed
\end{proof}

\begin{proof}[of Thm. \ref{main:th}]
Let $(\bar \mu,  \bar\nu,\{\bar \w^{r}\}_{ r \in [0,h]},\bar x,\bar v)$ be a minimizer of the impulse problem \eqref{impulse_optimal}, to which we associate, for  any $l=1,\dots,N$ and $r\in[0,h]$,   $\bar\zeta_l^r$ and $\bar\theta_l^r$ defined as in \eqref{jump_x1}, \eqref{jump_v1}, respectively. Let $(\bar S,\bar\varphi_0,\bar\varphi,\bar\tau,\bar y,\bar\beta)$ be the corresponding canonical extended process as in Thm. \ref{equivalence_th} and define $(\tilde S, \tilde \tau,\tilde\varphi,\tilde y,\tilde\beta)$ associated with it   as in \eqref{exsol}.   Cor. \ref{invert_cor} yields  
\[
\bar x(t)=\tilde y(\tilde\sigma(t)), \quad \bar\mu([0,t])=\int_0^{\tilde\sigma(t)}\frac{d\tilde\varphi}{ds}(s) ds, \quad \bar\nu([0,t])=\int_0^{\tilde\sigma(t)}\Big\|\frac{d\tilde\varphi}{ds}(s)\Big\| ds, \qquad t\in[0,T],
\]
where $\tilde\sigma$ is the right inverse of $\tilde\tau$. Moreover, for any $l=1,\dots,N$ and $r\in[0,h]$, we have
\bel{wzt}
\begin{cases}
\bar \w_l^r(s)=(\sigma^+(r)-\sigma^-(r)) \frac{d\bar\varphi_l}{ds}(\sigma^-(r)+s(\sigma^+(r)-\sigma^-(r))) \\
\bar\zeta_l^r(s)= \bar y_l(\sigma^-(r)+s(\sigma^+(r)-\sigma^-(r)))\\
\bar\theta_l^r(s)=\bar\beta_l(\sigma^-(r)+s(\sigma^+(r)-\sigma^-(r))), \qquad s\in[0,1],
\end{cases}
\eeq
where $\sigma$ is the right inverse of $\bar\varphi_0$. In particular, for any $r\in[0,h]$ one has $d\bar\varphi_0(s)/ds=0$ a.e. $s\in[\sigma^-(r),\sigma^+(r)]$, so that
\bel{prop_sigma}
\sigma^+(r)-\sigma^-(r) = \int_{\sigma^-(r)}^{\sigma^+(r)} \sum_{l=1}^N \Big\|\frac{d\bar\varphi_l}{ds}(s) \Big\| ds =\int_0^1 \sum_{l=1}^N \|\bar \w_l^r(s)\| ds.
\eeq 
Since $(\bar \mu,  \bar\nu,\{\bar \w^{r}\}_{ r \in [0,h]},\bar x,\bar v)$ is a minimizer for \eqref{impulse_optimal}, then $(\bar S,\bar\varphi_0,\bar\varphi,\bar\tau,\bar y,\bar\beta)$ is a minimizer for \eqref{extended_optimal}, owing to Thm. \ref{equivalence_th}. Hence, appealing to Thm. \ref{pmp_extended}  we deduce the existence of multipliers $(\lambda,d, \{q_l^0\}, \{q_l\})$ satisfying relations (A)--(F). Since $d=0$ if $\bar\beta_M(\bar S)<C$, from \eqref{equiv_tr} it follows  that $d=0$ whenever $\bar v(T)<C$.
Let  $(\tilde q_0,\tilde q):[0,\tilde S]\to{\mathbb R}\times{\mathbb R}^n$ be given by
$
(\tilde q^0,\tilde q)(s):= (q_l^0(s-(l-1)\bar S),q_l(s-(l-1)\bar S))$ for $s\in[(l-1)\bar S,l\bar S]$, $l=1,\dots,N$,
and set 
$$
(p^0 ,p)(t):= (\tilde q^0,\tilde q)(\tilde\sigma(t)), \quad  t\in[0,T],
$$
 and $(p^0(t),p(t))=0$ for $t>T$.  Moreover, for any $l=1,\dots,N$,  $r\in[0,h]$,  and $s\in[0,1]$, set
\bel{def_etalpha}
%\begin{cases}
\eta_l^r(s) = q_l(\sigma^-(r)+s(\sigma^+(r)-\sigma^-(r))), \quad
\alpha_l^r(s) =q_l^0(\sigma^-(r)+s(\sigma^+(r)-\sigma^-(r))).
%\end{cases}
\eeq
By construction, the transversality condition (iii) is satisfied in view of relation (C) of Thm. \ref{pmp_extended}. Moreover, recalling  the relations between $\tilde\sigma$ and $\sigma$ established in \eqref{2_2_0tilde}, \eqref{2_2_0ih},  
%\bel{prop_tildesigma}
%\tilde\sigma(r+(l-1)h)=\sigma^+(r)+(l-1)\bar S \qquad \forall l=1,\dots,N, \ \forall r\in[0,h[
%\eeq}
the previous calculations imply $p(T)=\eta_N^h(1)$, $p(0)=\eta_1^0(0)$ and $p((l-1)h)=\eta_l^0(1)$ for any $l=2,\dots,N$. Observe that if $\sigma^+(r)=\sigma^-(r)$, then $d\eta_l^r(s)/ds=0$ and $d\alpha_l^r(s)/ds=0$ for any $l=1,\dots,N$ and a.e. $s\in[0,1]$, so that \eqref{eq_eta} and \eqref{eq_alpha1}  hold true since $\bar \w_l^r\equiv0$. If instead $\sigma^+(r)>\sigma^-(r)$, then $d\bar\varphi_0(s)/ds=0$ a.e. $s\in[\sigma^-(r),\sigma^+(r)]$ and we deduce \eqref{eq_eta} and \eqref{eq_alpha1} by simply deriving the expressions of $\eta_l^r$ and $\alpha_l^r$, using to the adjoint equation (B) in Thm. \ref{pmp_extended} and taking into account \eqref{wzt}. Instead, the boundary conditions in \eqref{eq_eta} and \eqref{eq_alpha1} hold true in view of \eqref{def_etalpha} and thanks to 
%the fact that $(q_l^0(0),q_l(0))=(q_{l-1}^0 (\bar S), q_{l-1}(\bar S))$  for any $l=2,\dots,N$ (see 
condition (C) in Thm. \ref{pmp_extended}. 
Let us prove \eqref{eq_p}. Let $t\in](l-1)h,lh[$ for some $l=1,\dots,N$. From condition (B) of Thm. \ref{pmp_extended}, it follows that
\[
\begin{split}
p(t)&= \tilde q(\tilde\sigma(t)) = \tilde q(\tilde\sigma((l-1)h)) + \int_{\tilde\sigma((l-1)h)}^{\tilde\sigma(t)} \frac{d\tilde q}{ds}(s) ds \\
&=\eta_l^0(1) - \sum_{i=0}^M \int_{\tilde\sigma((l-1)h)}^{\tilde\sigma(t)} \tilde q(s+i\bar S) \cdot \nabla_{x_{i+1}} \tilde f(s+i\bar S) \frac{d\tilde \tau}{ds}(s) ds \\
&\qquad\quad -\sum_{i=0}^M\sum_{j=1}^m \int_{\tilde\sigma((l-1)h)}^{\tilde\sigma(t)}  \tilde q(s+i\bar S) \cdot  \nabla_{x_{i+1}} \tilde g_j(s+i\bar S)\frac{d\tilde\varphi^j}{ds}(s+i\bar S) ds.
\end{split}
\]
Since $\tilde q(\tilde\sigma(t')+i\bar S)=\tilde q(\tilde\sigma(t'+ih))=p(t'+ih)$, using \eqref{change_var} with $A=\tilde\tau$ and $B=\tilde\sigma$ we deduce that
$
\int_{\tilde\sigma((l-1)h)}^{\tilde\sigma(t)} \tilde q(s+i\bar S) \cdot \nabla_{x_{i+1}} \tilde f(s+i\bar S) \frac{d\tilde \tau}{ds}(s) ds$ coincides with  $\int_{(l-1)h}^t  p(t'+ih)\cdot \nabla_{x_{i+1}} \bar f(t'+ih) dt'$
for any $i=0,\dots,M$.
Let $D:=\cup_k [\tilde\sigma^-(r_k), \tilde\sigma^+(r_k)]$, where $\cup_k \{r_k\}$ is the countable set of jumps of $\bar\nu$. Hence, we define $\hat\gamma(s):=0$ for $s\in D$ and $\hat\gamma(s):=d\tilde\varphi(s)/ds$ for $s\in[0,\tilde S]\setminus D$. Accordingly, one has
\[
(\bar\mu^c)^j([0,t])=\int_0^{\tilde\sigma(t)} \hat\gamma^j(s) ds = \int_{[0,t]} \hat\gamma^j(\tilde\sigma(t')) d\tilde\sigma(t'),
\]
where the second equality is a consequence of \eqref{change_var} applied to $A=\tilde\sigma$, $B=\tilde\tau$ and $F(\cdot)=\hat\gamma^j(\tilde\sigma(\cdot))$, and the fact that $\tilde\sigma(\tilde\tau(s))=s$ for all $s\in[0,\tilde S]\setminus D$. As a consequence, it holds
\[
\begin{split}
& \sum_{i=0}^M \sum_{j=1}^m\int_{\tilde\sigma((l-1)h)}^{\tilde\sigma(t)}\tilde q(s+i\bar S) \cdot \nabla_{x_{i+1}} \tilde g_j(s+i\bar S) \frac{d\tilde\varphi^j}{ds}(s+i\bar S) ds \\
&\qquad= \sum_{i=0}^M \sum_{j=1}^m\int_{\tilde\sigma((l-1)h)}^{\tilde\sigma(t)}  \tilde q(s+i\bar S) \cdot \nabla_{x_{i+1}} \tilde g_j(s+i\bar S) \hat\gamma^j(s+i\bar S) ds \\
&\qquad\qquad\qquad\qquad- \sum_{r\in](l-1)h,t]} [\tilde q(\tilde\sigma^+(r) - \tilde q(\tilde\sigma^-(r))] \\
&\qquad =\sum_{i=0}^M \sum_{j=1}^m\int_{(l-1)h}^t  p(t'+ih) \cdot  \nabla_{x_{i+1}} \bar g_j(t'+ih) d(\bar\mu^c)^j(t'+ih) \\
&\qquad\qquad\qquad\qquad -\sum_{r\in]0,t-(l-1)h]} [\eta_l^r(1)-\eta_l^r(0)].
\end{split}
\]
We omit the proof of the adjoint equations \eqref{eq_p01}, as it is completely analogous. %to that of \eqref{eq_p}.

To prove condition (i),  observe first  that Thm. \ref{pmp_extended},(A)  implies $\lambda+\|\tilde q\|_{L^\infty(0\tilde S)}\neq 0$.
By contradiction, assume that $\lambda+\|p\|_{L^\infty(0,T)}=0$.  In view of the very definition of $p$, it follows that $\tilde q=0$ on $[0,\tilde S]\setminus D$ and, by continuity, also on the boundary of $D$. As a consequence, $\eta_l^r(0)=\eta_l^r(1)=0$ for any $l=1,\dots,N$ and $r\in[0,h]$, so that from the linearity of \eqref{eq_eta} it follows $\eta_l^r\equiv0$. Hence, by \eqref{def_etalpha} it follows $\tilde q=0$ also on $D$, a contradiction.

We deduce condition (iv) using the time change $s=\tilde\sigma(t)$ in the following relation
\[
\max_{w\in\K,\, \|w\|=1} \tilde q(s) \cdot \sum_{j=1}^N \tilde g_j(s) w^j +d\leq0, \quad s\in[0,\tilde S],
\]
which follows from  \eqref{zero2}.

To prove condition (v), take $t\in{\rm supp}(\bar\nu)$ and let $s=\tilde\sigma(t)$. Then, for any $\eps>0$ the set $\{s'\in[0,\tilde S] \text{ : }  \|d\tilde\varphi(s')/ds' \|>0, \ |s-s'|<\eps \}$ has positive Lebesgue measure, so that  \eqref{zero3}, by continuity,  yields (v).

Now, let us fix $r\in]0,h[$ and let $s=\sigma(r)<\bar S$. Then the set $\SS_\eps^s:=\{s'\in[0,\bar S] \text{ : }  d\bar\varphi_0(s')/ds' >0, \ |s-s'|<\eps \}$ has positive Lebesgue measure for any $\eps>0$. In view of \eqref{implica} and by continuity one has $\sum_{l=1}^N [q_l^0(s) + q_l(s) \cdot f(\bar\tau_l(s),\{\bar y_{l-k}(s)\})]=0$. Hence, by \eqref{2_2_0tilde} and \eqref{2_2_0ih} we deduce that
\[
\sum_{l=1}^N [p^0(r+(l-1)h)+p(r+(l-1)h)\cdot \bar f(r+(l-1)h)]=0
\]
for any $r\in]0,h[$. Moreover the previous relation holds at $r=0$ if $\sigma^+(0)=0$ (so that $\SS_\eps^0$ has positive Lebesgue measure for any $\eps>0$), and at $r=h$ if $\sigma^-(h)=\sigma^+(h)$ (so that $\SS_\eps^h$ has positive Lebesgue measure for any $\eps>0$) and $\sigma^+(0)=0$ (so that $\tilde\sigma(lh)=l\bar S$ for any $l=1,\dots,N$). Taking account also of \eqref{prop_sigma}, we have proved condition (vi).

If $\sum_{l=1}^N\int_0^1 \|\bar \w_i^r(s)\| ds >0$ for some $r\in[0,h]$, then $\sigma^+(r)-\sigma^-(r)>0$ and $\sum_{l=1}^N \|d\bar\varphi_l(s)/ds\|=1$ for a.e. $s\in[\sigma^-(r),\sigma^+(r)]$. Hence for a.e.  $s\in[\sigma^-(r),\sigma^+(r)]$ there exists $l=l(s)\in\{1,\dots,N\}$ such that $\|d\bar\varphi_l(s)/ds\|>0$. In view of \eqref{zero1} and \eqref{zero3} and by continuity we deduce that for all $s\in[\sigma^-(r),\sigma^+(r)]$ it holds
\[
\begin{split}
&\sum_{l=1}^N [q_l^0(s) + q_l(s)\cdot f(\bar\tau_l(s), \{\bar y_{l-k}(s)\})]\\
&\qquad\qquad\leq \max_{l=1,\dots,N}\Big[ \max_{\w\in\K,\, \|\w\|=1} q_l(s) \cdot \Big( \sum_{j=1}^m g_j(\bar\tau_l(s), \{\bar y_{l-k}(s)\})\Big) +d \Big] =0.
\end{split}
\]
By \eqref{wzt} and \eqref{def_etalpha} we deduce condition (vii).
Condition (viii) can be proved in a similar way, again  in consequence of \eqref{zero2} and \eqref{zero3}.
\end{proof}

\end{document}